\setlist[enumerate,2]{
  ref=(\alph*),
}
\newtheorem{theorem}{Theorem}[section]
\newtheorem{lemma}[theorem]{Lemma}
\newtheorem{proposition}[theorem]{Proposition}
\newtheorem{corollary}[theorem]{Corollary}
\newtheorem{conjecture}[theorem]{Conjecture}
\theoremstyle{definition}
\newtheorem{definition}[theorem]{Definition}
\newtheorem{remark}[theorem]{Remark}
\newtheorem{notation}[theorem]{Notation}
\newtheorem{question}[theorem]{Question}
\DeclareMathOperator{\Hom}{Hom}
\newcommand{\colim}{\operatorname{colim}}
\newcommand{\Spec}{\operatorname{Spec}}
\newcommand{\Proj}{\operatorname{Proj}}
\newcommand{\A}{{\mathbb A}}
\newcommand{\pone}{{\mathbb P}^1}
\newcommand{\Nis}{\operatorname{Nis}}
\newcommand{\Sm}{\operatorname{Sm}}
\newcommand{\Shv}{{\operatorname{Shv}}}
\newcommand{\PShv}{{\operatorname{PShv}}}
\newcommand{\<}[1]{{\langle}#1 {\rangle}}
\newcommand{\simpnis}{{\Delta}^{\circ}\Shv_{\Nis}({\Sm}/k)}
\newcommand{\ie}{{\it i.e.\/},\ }
\newcommand{\calH}{\mathcal H}
\def\<{\langle}
\def\>{\rangle} 
\def\-{\overline} 
\def\~{\widetilde}
\def\^{\widehat}
\def\@{\mathcal}
\def\!{\mathscr}
\def\&{\mathbf}
\def\_{\underline}
\def\x{\times}
\providecommand{\keywords}[1]
{
  \small    
  \textbf{\textbf{Keywords---}} #1
}
\begin{document}
\title{$\A^1$-connected components of blow-up of threefolds fibered over a surface}
\author{\small Rakesh Pawar}
\affil{\small Department of Mathematics\\
Indian Institute of Science Education and Research (IISER), Mohali\\
Email: rakesh.pawar.math@gmail.com
}
  
\date{}
\maketitle
\begin{abstract}
Over a perfect field, we determine the sheaf of $\A^1$-connected components of a class of threefolds given by the Blow-up of a variety admitting a $\pone$-fibration over either an $\A^1$-rigid or a non-uniruled surface, along a smooth curve. As a consequence, we verify that the sheaf of $\A^1$-connected components for such varieties is $\A^1$-invariant.  
\end{abstract}

\keywords{$\A^1$-homotopy, $\A^1$-connected components}



\section{Introduction}

F. Morel and V. Voevodsky constructed in~\cite{mv99} the motivic homotopy category suitable for schemes, with the affine line $\A^1$ playing the role of the unit interval in the classical homotopy theory. Briefly speaking, one enlarges the category $\Sm/k$ of smooth schemes over a field $k$, to the category $\simpnis$ of simplicial  Nisnevich sheaves of sets over the Nisnevich site $(\Sm/k)_{\Nis}$ of smooth schemes over the field. By localizing the category with the morphisms given by $\A^1$-weak equivalences, we get the $\A^1$-homotopy category. As in the classical homotopy theory, there is the analogous notion of the Nisnevich sheaf $\pi_0^{\A^1}(\mathcal{X})$ of $\A^1$-connected components of a space $\mathcal{X}$ and the Nisnevich sheaves of $\A^1$-homotopy groups $\pi_i^{\A^1}(\mathcal{X}, x)$ for $i\geq 1$ and a pointed space $(\mathcal{X},x)$. F. Morel showed that the higher homotopy groups
$\pi_i^{\A^1}(\mathcal{X}, x)$ 
for $i\geq 1$ are $\A^1$-invariant. Consequently following conjecture was proposed by F. Morel. 
\begin{conjecture}(\cite[Conjecture 1.12]{morel})
\label{morelconj}
 For any simplicial sheaf $\mathcal{X}$, the sheaf $\pi_0^{\A^1}(\mathcal{X})$ is $\A^1$-invariant. \end{conjecture}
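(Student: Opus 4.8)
The plan is to replace the homotopy-theoretic $\pi_0^{\A^1}$ by an explicit sheaf, built by an inductive ``$\A^1$-chain plus Nisnevich-refinement'' process, and to show that this process terminates at an $\A^1$-invariant sheaf. I would begin from the reformulation that, for any (simplicial) sheaf $\mathcal{X}$, $\pi_0^{\A^1}(\mathcal{X}) = \pi_0^{\Nis}(\mathrm{L}_{\A^1}\mathcal{X})$ is the Nisnevich sheafification of the presheaf $U \mapsto \pi_0(\mathrm{L}_{\A^1}\mathcal{X}(U))$, which --- since $\mathrm{L}_{\A^1}\mathcal{X}$ is $\A^1$-local, i.e. $\mathrm{L}_{\A^1}\mathcal{X}(U) \simeq \mathrm{L}_{\A^1}\mathcal{X}(\A^1 \times U)$ for all $U$ --- is already an $\A^1$-invariant \emph{presheaf} of sets. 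Thus Conjecture~\ref{morelconj} is precisely the assertion that the Nisnevich sheafification of this particular $\A^1$-invariant presheaf is again $\A^1$-invariant. Concretely I would model the sheafification by the functor $\mathcal{S}$ of ``naive $\A^1$-connected components'' (the Nisnevich sheafification of $U \mapsto \pi_0(\mathcal{X}(U))/\langle \gamma(0) \sim \gamma(1)\rangle$), iterated transfinitely, and track how the ``ghost'' identifications that appear beyond a single step are exactly those produced by the failure of sheafification to commute with the quotient.

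The one piece I expect to be clean is the passage from a fixed point of the iteration to $\A^1$-invariance. Precisely: if $\mathcal{F}$ is a Nisnevich sheaf of sets with $\mathcal{S}(\mathcal{F}) = \mathcal{F}$ via the canonical map, then $\mathcal{F}$ is $\A^1$-invariant. The point is that $\mathcal{S}(\mathcal{F}) = \mathcal{F}$ forces the relation $\sim_V$ to be trivial on $\mathcal{F}(V)$ for every $V \in \Sm/k$ (the canonical map factors through the objectwise quotient, and an isomorphism cannot factor through a proper quotient); and then for $s \in \mathcal{F}(\A^1 \times U)$ one pulls $s$ back along the multiplication $\A^1 \times \A^1 \times U \to \A^1 \times U$, $(a,b,u) \mapsto (ab,u)$, obtaining an element of $\mathcal{F}(\A^1 \times (\A^1 \times U))$ that restricts to $s$ at $a = 1$ and to $\mathrm{pr}^*(s|_{t=0})$ at $a = 0$ --- an $\A^1$-homotopy over the base $\A^1 \times U$ --- whence $s = \mathrm{pr}^*(s|_{t=0})$; together with the zero-section splitting this yields $\mathcal{F}(U) \xrightarrow{\sim} \mathcal{F}(\A^1 \times U)$. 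So the whole conjecture becomes equivalent to: $\pi_0^{\A^1}(\mathcal{X})$ is itself a fixed point of $\mathcal{S}$, i.e. it carries no nonconstant $\A^1$-homotopies; equivalently, the inductive process computing it has genuinely terminated.

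That last point is the main obstacle, and I do not expect it to be routine. The difficulty is that $\mathrm{L}_{\A^1}\mathcal{X}$ is built by interleaving the singular construction $\mathrm{Sing}^{\A^1}_*$ with Nisnevich-local replacement, and at every intermediate stage one is forced to carry a space with genuinely nontrivial higher $\A^1$-homotopy; after the next application of $\mathrm{Sing}^{\A^1}_*$ those higher groups can manufacture new $\pi_0$-level relations --- this is exactly why $\pi_0^{\A^1}(\mathcal{X})$ is \emph{not} the naive iteration $\mathcal{S}^\infty(\mathcal{X})$, which over-identifies --- and controlling this interaction uniformly, with no a priori bound on the stage at which it stabilizes, is what is missing in general. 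I would therefore expect any unconditional attack to require a topology-free grip on how $\mathrm{Sing}^{\A^1}_*$ interacts with Nisnevich descent, of a kind presently available only under extra hypotheses; indeed it is precisely under such hypotheses --- here, that the relevant threefold's $\pi_0^{\A^1}$ is forced into a computable shape by the $\A^1$-rigidity or non-uniruledness of the base surface --- that the conjecture has so far been verified, and the body of this paper carries out exactly such a verification for blow-ups of $\pone$-fibrations along smooth curves.
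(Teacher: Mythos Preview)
The statement you are asked to address is recorded in the paper as a \emph{conjecture} (Morel's Conjecture~\ref{morelconj}), and the paper makes no attempt to prove it in general. The paper's contribution is to verify it for the specific class of threefolds described in Theorems~\ref{a1rigidsurface} and~\ref{nonunisurface}; there is no ``paper's own proof'' of the full statement to compare against.

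Your proposal is likewise not a proof. You correctly reformulate the problem in terms of the iteration $\mathcal{S}$ and correctly prove the easy direction (a fixed point of $\mathcal{S}$ is $\A^1$-invariant), but you then explicitly concede that the remaining step --- showing that $\pi_0^{\A^1}(\mathcal{X})$ is such a fixed point --- is ``the main obstacle'' that you ``do not expect \dots\ to be routine,'' and you close by noting that it has only been carried out under extra hypotheses. That is an honest assessment of an open problem, not a proof.

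There is also a genuine confusion in your third paragraph. You assert that $\pi_0^{\A^1}(\mathcal{X})$ is \emph{not} the naive iteration $\mathcal{S}^\infty(\mathcal{X}) = \mathcal{L}(\mathcal{X})$, on the grounds that the latter ``over-identifies.'' But the paper records (citing \cite[Theorem~2.16]{bhs}) that the canonical map $\pi_0^{\A^1}(\mathcal{X}) \to \mathcal{L}(\mathcal{X})$ is an isomorphism \emph{if and only if} $\pi_0^{\A^1}(\mathcal{X})$ is $\A^1$-invariant. Thus the conjecture you set out to prove is precisely the assertion that $\mathcal{L}$ does \emph{not} over-identify; your parenthetical remark, taken at face value, is equivalent to asserting that the conjecture fails. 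If you have in mind a specific counterexample for general simplicial sheaves, you should say so explicitly and restrict the statement accordingly; otherwise this sentence undercuts the rest of your outline.
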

 Morel's conjecture holds for $\A^1$-rigid schemes (in the sense of the Remark~\ref{a1rigid}), in particular for smooth projective curves of genus $>0$, abelian varieties. More non-trivial cases for which Morel's conjecture is verified include H-spaces, homogenous spaces for H-groups \cite{ch14}.
 Recently it has been verified for non-uniruled smooth projective surfaces over any perfect field in \cite{bhs} and smooth projective birationally ruled surfaces over an algebraically closed field of characteristic 0 in \cite{bs19}. Using the classification of smooth projective surfaces over an algebraically closed field of characteristic 0, Morel's conjecture holds for surfaces. In this paper, we are interested in \textit{the universal $\A^1$-invariant quotient} (\ref{uniquotient}) of a particular class of smooth projective threefolds.  As a consequence we show that, conjecture~\ref{morelconj} holds for this class of threefolds.
 
 Given a scheme $X$ over the field $k$, we consider the \textit{Morel-Voevodsky singular construction} $Sing_*^{\A^1}(X)$ for $X$. We define the sheaf of $\A^1$-chain connected components of $X$, denoted by $\mathcal{S}(X)$ to be the Nisnevich sheafification of the presheaf on $\Sm/k$ given by
\begin{center}
$V\mapsto \pi_0(Sing_*^{\A^1}X(V))$
\end{center}
for $V\in \Sm/k$. 

By iterating this construction, for a natural number $n>0$, one gets a sequence of sheaves  $\{\mathcal{S}^n(X)\}_{n\geq 0}$ with epimorphisms  $\mathcal{S}^n(X)\to  \mathcal{S}^{n+1}(X)$. We consider \textit{the universal $\A^1$-invariant quotient} $$\mathcal{L}(X):=\colim_{n\geq 0}  \mathcal{S}^n(X).$$ The canonical map  $\pi_0^{\A^1}(\mathcal{X})\to \mathcal{L}(\mathcal{X}) $ is an isomorphism if and only if the sheaf $\pi_0^{\A^1}(\mathcal{X})$ is $\A^1$-invariant as shown in~\cite[Theorem 2.16]{bhs}.
As a step towards Morel's conjecture, the natural question is
  \begin{question}{\cite[Remark 3.17]{bhs}}
 Given a scheme $X$ over a field $k$, does there exist $n$, such that the sequence $\{\mathcal{S}^n(X)\}_{n\geq 0}$ of sheaves stabilizes i.e $\mathcal{S}^n(X)\to \mathcal{S}^{n+1}(X)$ is an isomorphism for some $n>0$ (possibly depending on $X$)?  
 \end{question}
 In general, the sheaf of $\A^1$-chain connected components $\mathcal{S}(X)$ is not isomorphic to $\pi_0^{\A^1}(X)$ as shown in \cite[Section 4]{bhs}.
It was shown in \cite{bs15}, that for $\A^1$-connected anisotropic algebraic groups over an infinite perfect field, the sheaf $\mathcal{S}(X)$ is not isomorphic to $\pi_0^{\A^1}(X)$, 
but rather a further iteration $\mathcal{S}^n(X)$ is isomorphic to $\pi_0^{\A^1}(X)$.

In this paper, we prove Proposition~\ref{n-ghostlyingovergamma} regarding ghost homotopies for smooth proper varieties admitting a $\pone$-fibration over a surface. In particular, we consider smooth proper varieties admitting a $\pone$-fibration over either a non-uniruled surface or an $\A^1$-rigid surface. We prove the following results in this paper.

\begin{theorem}[Theorem~\ref{nonunisurface}]
\label{MT1}
Let $B$ be a non-uniruled, smooth, proper variety of dimension 2 over a perfect field $k$. Let $X$ be a smooth projective over $k$ of dimension 3 which admits a morphism $\pi: X\to B$ of schemes over $k$ such that $\pi$ is a $\mathbb{P}^1$-fibration. Let $W$ be a smooth irreducible curve in $B$. Let $Z$ be a smooth closed subscheme of $X$ such that $Z$ is a section 
of the $\pone$-fibration on $W$, pulled back from $X$.
 Let $\widetilde{X}$ be the blow up of $X$ along the closed subscheme $Z$.
 Then 
 $\mathcal{S}^2(\widetilde{X})\simeq\mathcal{S}^3(\widetilde{X}).$
\end{theorem}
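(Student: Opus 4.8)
The plan is to analyze the $\A^1$-chain connected component sheaves of $\widetilde{X}$ by combining the blow-up structure with the $\pone$-fibration, reducing everything to the surface $B$. First I would recall the key geometric input: the blow-up $\widetilde{X} \to X$ is an isomorphism away from the exceptional divisor $E$ over $Z$, and since $Z \to W$ is a section of the $\pone$-fibration pulled back over the smooth curve $W \subset B$, the normal bundle of $Z$ in $X$ is a rank-$2$ bundle, so $E$ is a $\pone$-bundle over $Z \cong W$. The composite $\widetilde{X} \to X \xrightarrow{\pi} B$ then exhibits $\widetilde{X}$ as a variety whose generic fibre over $B$ is still $\pone$ but whose fibre over points of $W$ has picked up extra rational curves. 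The crucial point is that $B$ is either $\A^1$-rigid or non-uniruled; in the non-uniruled case, by \cite{bhs} its sheaf $\mathcal{S}(B)$ (indeed $\pi_0^{\A^1}(B)$) is $\A^1$-invariant and equals $B$ itself viewed as a sheaf, so any $\A^1$-chain connecting points of $\widetilde{X}$ must, after pushing down, stay inside a single point of $B$.

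The main technical step is to compute $\mathcal{S}(\widetilde{X})$, $\mathcal{S}^2(\widetilde{X})$ and show the second stabilizes. I would use Proposition~\ref{n-ghostlyingovergamma} (the ghost-homotopy statement advertised in the introduction) to control how $\A^1$-chains behave over $B$: a chain of $\A^1$'s in $\widetilde{X}(F)$ for $F$ a field maps to a chain in $B(F)$, which by $\A^1$-rigidity/non-uniruledness of $B$ is constant, so the whole chain lies in a single fibre $\widetilde{X}_b$. Over a point $b \notin W$ this fibre is just $\pone_{\kappa(b)}$, which is $\A^1$-connected, contributing nothing. Over $b \in W$ the fibre is a chain of two $\pone$'s (the strict transform of the original fibre meeting the exceptional $\pone$), still $\A^1$-chain connected. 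Hence at the level of $F$-points, $Sing_*^{\A^1}\widetilde{X}(F)$ already has $\pi_0$ equal to $B(F)$, and one expects $\mathcal{S}(\widetilde{X}) \simeq B$ — but the subtlety flagged in the introduction (e.g. \cite{bs15}, \cite[Section 4]{bhs}) is that $\mathcal{S}$ need not compute $\pi_0^{\A^1}$ after one step, because the naive fibrewise picture can fail to glue in families over non-trivial Henselian local schemes $V$. This is precisely why the theorem claims stabilization at $n=2$ rather than $n=1$: the first iteration $\mathcal{S}(\widetilde X)$ may still carry ``phantom'' identifications coming from chains that only exist after base change, and one must show these are already accounted for after the second application of $Sing_*^{\A^1}$.

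The key steps, in order, are: (1) identify $E$ as a $\pone$-bundle over $W$ and describe the fibres of $\widetilde{X} \to B$ explicitly; (2) establish that $\mathcal{S}(B) = B$ and $B$ is $\A^1$-rigid as a sheaf in the relevant sense, so that any map $\A^1_F \to \widetilde X$ composed with the projection to $B$ is constant (this is where Remark~\ref{a1rigid} and \cite{bhs} enter for the non-uniruled case, and is immediate in the $\A^1$-rigid case); (3) apply Proposition~\ref{n-ghostlyingovergamma} to promote this fibrewise statement into a description of $\mathcal{S}^n(\widetilde X)$ as a pushout/gluing of $B$ with the contribution of a neighbourhood of $W$, showing that the gluing data along $W$ stabilizes after two steps; (4) conclude $\mathcal{S}^2(\widetilde X) \simeq \mathcal{S}^3(\widetilde X)$ by checking the epimorphism $\mathcal{S}^2(\widetilde X) \to \mathcal{S}^3(\widetilde X)$ is injective on sections over Henselian local schemes. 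I expect step (3) to be the main obstacle: tracking how ghost homotopies over $B$ interact with the exceptional locus, and in particular proving that no new collapsing of sections occurs between the second and third iteration — this is the heart of the ``does the sequence stabilize'' question of \cite[Remark 3.17]{bhs}, and requires a careful local analysis near $W$ where the fibre degenerates, rather than any single clean formal argument.
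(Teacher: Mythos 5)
There is a genuine gap, and it is exactly at the point where you invoke non-uniruledness. You assert that for $B$ non-uniruled one has $\mathcal{S}(B)=B$ ("$B$ is $\A^1$-rigid as a sheaf in the relevant sense"), so that every homotopy in $\widetilde{X}$, pushed down to $B$, is constant and the whole chain lies in a single fibre, i.e.\ the entire ghost homotopy lies over a single morphism $\gamma:U\to B$. This is false: non-uniruled is strictly weaker than $\A^1$-rigid. A non-uniruled surface may well contain rational curves (it merely cannot be dominated by a family of them), and along such curves there are non-constant maps $\A^1_F\to B$; what \cite{bhs} gives for non-uniruled surfaces is $\mathcal{S}(B)\simeq\mathcal{S}^2(B)$, not $\mathcal{S}(B)\simeq B$. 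Consequently your reduction "any $2$-ghost homotopy lies over some $\gamma:U\to B$, now apply Proposition~\ref{n-ghostlyingovergamma}" only covers part of the situation, and your steps (3)--(4) never address the remaining case, which is precisely the one that makes the non-uniruled hypothesis (rather than $\A^1$-rigidity) relevant.

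The paper's proof handles this via Koll\'ar's criterion: each homotopy occurring in the $2$-ghost homotopy, composed with $\widetilde{X}\to X\to B$, is either constant or \emph{non-dominant}. In the first case the argument you propose goes through (the homotopies glue and the whole $2$-ghost homotopy lies over $\gamma:U\to B$, so Proposition~\ref{n-ghostlyingovergamma} applies). In the second case the $2$-ghost homotopy factors through the preimage of a $1$-dimensional closed subscheme $C\subseteq B$, hence through the blow-up of the $\pone$-fibration $X\times_B\overline{C}\to\overline{C}$ over the normalization $\overline{C}=\coprod\overline{C_j}$, i.e.\ through (blow-ups of) ruled surfaces over curves. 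One then needs separate inputs: if $\overline{C_j}\simeq\pone$ the blow-up is a rational surface and $\mathcal{S}^2(Bl_{Z'}(X\times_B\overline{C_j}))=*$ by \cite[Corollary 3.3]{sawant17}; if $\overline{C_j}$ has positive genus it is $\A^1$-rigid, the ghost homotopy lies over a morphism $U\to\overline{C_j}$, and the argument of Proposition~\ref{n-ghostlyingovergamma} applies over the curve. Without this branch (the reduction to ruled surfaces over curves and the appeal to the known results there), the proof is incomplete; as written, your argument would only prove the theorem under the stronger hypothesis that $B$ is $\A^1$-rigid, which is the other theorem of the paper.
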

As a consequence, we deduce the following
\begin{corollary}[Corollary~\ref{cor1}]
For $\widetilde{X}$ satisfying the assumptions of Theorem~\ref{MT1}, $\pi_0^{\A^1}(\widetilde{X})$ is $\A^1$-invariant.
\end{corollary}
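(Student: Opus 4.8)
The plan is to deduce the corollary directly from Theorem~\ref{MT1} together with the criterion recalled in the introduction, namely~\cite[Theorem 2.16]{bhs}, which says that the canonical map $\pi_0^{\A^1}(\widetilde{X})\to \mathcal{L}(\widetilde{X})$ is an isomorphism precisely when $\pi_0^{\A^1}(\widetilde{X})$ is $\A^1$-invariant. Since $\mathcal{L}(\widetilde{X})=\colim_{n\geq 0}\mathcal{S}^n(\widetilde{X})$ is, by definition, the colimit of the tower $\mathcal{S}^n(\widetilde{X})\to \mathcal{S}^{n+1}(\widetilde{X})$, it suffices to show that this tower stabilizes; then $\mathcal{L}(\widetilde{X})$ is computed by any sufficiently large stage, and one must still identify that stage with $\pi_0^{\A^1}(\widetilde{X})$.

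First I would observe that Theorem~\ref{MT1} gives $\mathcal{S}^2(\widetilde{X})\simeq \mathcal{S}^3(\widetilde{X})$ via the canonical epimorphism. The key point is that once one stage map $\mathcal{S}^n(X)\to \mathcal{S}^{n+1}(X)$ is an isomorphism, all subsequent maps are isomorphisms as well: the construction $X\mapsto \mathcal{S}(X)$ (more precisely its sheaf-level analogue applied to the space) is functorial, so an isomorphism at one level propagates upward by applying $\mathcal{S}(-)$ repeatedly. Hence $\mathcal{S}^2(\widetilde{X})\simeq \mathcal{S}^n(\widetilde{X})$ for all $n\geq 2$, and therefore $\mathcal{L}(\widetilde{X})\simeq \mathcal{S}^2(\widetilde{X})$.

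Next I would invoke the general fact that $\mathcal{L}(X)$ is always $\A^1$-invariant (it is the \emph{universal} $\A^1$-invariant quotient, and the stabilized sheaf $\mathcal{S}^n(X)$ for large $n$ is $\A^1$-invariant — this is exactly the content that makes the colimit $\A^1$-invariant, cf.~\cite{bhs}). Combined with the stabilization above, $\pi_0^{\A^1}(\widetilde{X})\to \mathcal{L}(\widetilde{X})$ being an isomorphism is then equivalent to $\pi_0^{\A^1}(\widetilde{X})$ being $\A^1$-invariant by~\cite[Theorem 2.16]{bhs}. So the remaining task is to check that the canonical map $\pi_0^{\A^1}(\widetilde{X})\to \mathcal{L}(\widetilde{X})=\mathcal{S}^2(\widetilde{X})$ is an isomorphism; equivalently, that $\mathcal{S}^2(\widetilde{X})$ is already $\A^1$-invariant, which forces the map from $\pi_0^{\A^1}$ (the initial $\A^1$-invariant quotient receiving $\mathcal{S}^0=\pi_0^{s}$... — here one uses that $\mathcal{S}^2(\widetilde X)$ being $\A^1$-invariant and a quotient of $\pi_0^{\A^1}$-compatible data pins it down).

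The main obstacle I anticipate is the last identification: knowing the tower stabilizes at stage $2$ tells us $\mathcal{L}(\widetilde{X})\simeq \mathcal{S}^2(\widetilde{X})$, but to conclude $\A^1$-invariance of $\pi_0^{\A^1}(\widetilde{X})$ one genuinely needs the equivalence in~\cite[Theorem 2.16]{bhs} in the form: the tower stabilizing is \emph{equivalent} to $\pi_0^{\A^1}\cong \mathcal{L}$. I expect the paper's Corollary~\ref{cor1} is simply the formal consequence: Theorem~\ref{MT1} $\Rightarrow$ tower stabilizes $\Rightarrow$ $\mathcal{L}(\widetilde{X})=\mathcal{S}^2(\widetilde{X})$ is $\A^1$-invariant $\Rightarrow$ by~\cite[Theorem 2.16]{bhs} the map $\pi_0^{\A^1}(\widetilde{X})\to\mathcal{L}(\widetilde{X})$ is an isomorphism, hence $\pi_0^{\A^1}(\widetilde{X})$ is $\A^1$-invariant. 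So the proof is a short formal chain; the real work was all in Theorem~\ref{MT1}.
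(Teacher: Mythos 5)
Your reduction to stabilization is fine: Theorem~\ref{MT1} gives $\mathcal{S}^2(\widetilde{X})\simeq\mathcal{S}^3(\widetilde{X})$, functoriality of $\mathcal{S}$ propagates this to all higher stages, so $\mathcal{L}(\widetilde{X})\simeq\mathcal{S}^2(\widetilde{X})$, and this sheaf is $\A^1$-invariant. The gap is in your concluding chain: from ``$\mathcal{L}(\widetilde{X})$ is $\A^1$-invariant'' you infer, via \cite[Theorem 2.16]{bhs}, that $\pi_0^{\A^1}(\widetilde{X})\to\mathcal{L}(\widetilde{X})$ is an isomorphism. That implication is not available, and as a general principle it is false: $\mathcal{L}(X)$ is $\A^1$-invariant for \emph{every} $X$ (it is the universal $\A^1$-invariant quotient), so if its invariance alone forced the comparison map to be an isomorphism, Morel's conjecture would follow for all schemes with no hypothesis on $X$. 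The criterion in \cite{bhs} runs the other way: one must first prove the comparison map is an isomorphism (equivalently, that $\pi_0^{\A^1}(\widetilde{X})$ is $\A^1$-invariant). The canonical map $\pi_0^{\A^1}(\widetilde{X})\to\mathcal{L}(\widetilde{X})$ is an epimorphism; its injectivity is exactly the nontrivial point, and knowing only that the tower stabilizes does not give it. You flag this obstacle yourself, but the sentence meant to close it does not contain an argument.

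The missing ingredient is the one the paper actually uses: a retraction. One needs a morphism $\mathcal{S}^2(\widetilde{X})\to\pi_0^{\A^1}(\widetilde{X})$ through which the canonical epimorphism $\widetilde{X}\to\pi_0^{\A^1}(\widetilde{X})$ factors, i.e., one must know that sections of $\widetilde{X}$ which become equal in $\mathcal{S}^2(\widetilde{X})$ --- Nisnevich-locally, sections connected by the ghost homotopies arising in the proof of the theorem --- already have the same image in $\pi_0^{\A^1}(\widetilde{X})(U)$. This is not formal, because sheafification can destroy $\A^1$-invariance, so $\A^1$-homotopies of the sheaf $\mathcal{S}(\widetilde{X})$ need not be detected by $\pi_0^{\A^1}(\widetilde{X})$; it is precisely why Proposition~\ref{n-ghostlyingovergamma} and Lemmas~\ref{lemma1}--\ref{lemma4} carry the extra conclusion ``and map to the same element in $\pi_0^{\A^1}(\widetilde{X})(U)$'', established through the explicit $1$-ghost homotopies of Lemma~\ref{lemmaB} together with \cite[Lemma 4.1]{bhs}. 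With that factorization in hand, $\pi_0^{\A^1}(\widetilde{X})\to\mathcal{S}^2(\widetilde{X})\simeq\mathcal{L}(\widetilde{X})$ admits a retract, and the corollary then follows from \cite[Lemma 2.16]{bhs} (a retract of the $\A^1$-invariant sheaf $\mathcal{L}(\widetilde{X})$ is again $\A^1$-invariant; equivalently, the split monomorphism, being also an epimorphism, is an isomorphism). Your proposal never produces the map $\mathcal{S}^2(\widetilde{X})\to\pi_0^{\A^1}(\widetilde{X})$, and that construction is the actual content of the corollary beyond the displayed statement of Theorem~\ref{MT1}.
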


\begin{theorem}[Theorem~\ref{a1rigidsurface}]
\label{MT2}
Let $B$ be an $\A^1$-rigid, smooth, proper variety of dimension 2 over a perfect field $k$. Let $X$ be a smooth projective over $k$ of dimension 3 which admits a morphism $\pi: X\to B$ of schemes over $k$ such that $\pi$ is a $\mathbb{P}^1$-fibration. Let $W$ be a smooth irreducible curve in $B$. Let $Z$ be a smooth closed subscheme of $X$ such that $Z$ is a section 
of the $\pone$-fibration on $W$, pulled back from $X$.
 Let $\widetilde{X}$ be the blow up of $X$ along the closed subscheme $Z$. 
  Then 
 $\mathcal{S}^2(\widetilde{X})\simeq\mathcal{S}^3(\widetilde{X}).$
\end{theorem}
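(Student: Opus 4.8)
The plan is to reduce the statement to Proposition~\ref{n-ghostlyingovergamma}, using the $\A^1$-rigidity of $B$ to force every relevant ghost homotopy to ``lie over'' the base.

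Since each morphism $\mathcal{S}^n(\widetilde{X})\to \mathcal{S}^{n+1}(\widetilde{X})$ is an epimorphism of Nisnevich sheaves, it suffices to prove that $\mathcal{S}^2(\widetilde{X})\to \mathcal{S}^3(\widetilde{X})$ is a monomorphism. As $\mathcal{S}^3=\mathcal{S}(\mathcal{S}^2)$, a standard reduction (as used for the analogous statements in \cite{bhs}, \cite{bs15}) shows that this amounts to proving that every ghost homotopy in $\mathcal{S}^2(\widetilde{X})$ is trivial: for every finitely generated separable field extension $F/k$, any section of $\mathcal{S}^2(\widetilde{X})$ over $\A^1_F$ takes the same value at the two rational points $0,1\in\A^1(F)$.

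The key point is the structure morphism $p\colon \widetilde{X}\to B$, the composite of the blow-down $\widetilde{X}\to X$ with $\pi$. Because $B$ is $\A^1$-rigid we have $Sing_*^{\A^1}(B)\simeq B$, so $\mathcal{S}^n(B)\simeq B$ for every $n$ and $B$ admits no nonconstant sections over $\A^1_F$. Hence the image under $p$ of any section $h$ of $\mathcal{S}^2(\widetilde{X})$ over $\A^1_F$ is constant, say equal to $b\in B(F)$; in particular the values $h(0)$ and $h(1)$ lie over $b$. Moreover, after refining $h$ locally on $\A^1_F$, one expects to be able to arrange that $h$ itself lies over the constant section $\gamma=b$. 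At this point Proposition~\ref{n-ghostlyingovergamma} applies: a ghost homotopy in $\mathcal{S}^2(\widetilde{X})$ lying over a constant section $\gamma$ of $B$ is trivial. Therefore $h(0)=h(1)$, which yields the monomorphism and hence $\mathcal{S}^2(\widetilde{X})\simeq \mathcal{S}^3(\widetilde{X})$.

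The step I expect to be the genuine obstacle is the one just highlighted: replacing an arbitrary ghost homotopy in $\mathcal{S}^2(\widetilde{X})$ by one that lies over a constant section of $B$. Two ingredients go into this. First, one needs that $\mathcal{S}(\widetilde{X})$ and $B$ agree on field-valued points; this should follow because every fibre of $p$ is a tree of rational curves (over $b\notin W$ it is the fibre of $\pi$, over $b\in W$ it is the proper transform of that fibre together with a fibre of the exceptional divisor $E$), hence $\A^1$-chain connected, while $\A^1$-rigidity of $B$ forces every $\A^1$ in $\widetilde{X}$ to map to a point of $B$ and therefore to stay within a single fibre of $p$. Second, one must control how the $\pone$-bundle structure of $E$ over the curve $W\subseteq B$ interacts with the fibres of $p$ over points of $W$, so that the local behaviour of chains over a Henselian neighbourhood of $b$ in $B$ is exactly what is analysed in Proposition~\ref{n-ghostlyingovergamma}. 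Once the ghost homotopy is localized over $b$, the $\A^1$-rigidity of $B$ leaves no further contribution from the base, and the Proposition closes the argument; this is also precisely where the $\A^1$-rigid hypothesis makes the present case cleaner than the non-uniruled case treated in Theorem~\ref{MT1}.
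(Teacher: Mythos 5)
Your high-level skeleton (show $\mathcal{S}^2(\widetilde{X})\to\mathcal{S}^3(\widetilde{X})$ is a monomorphism, use $\A^1$-rigidity of $B$ to force the relevant ghost homotopy to lie over the base, then invoke Proposition~\ref{n-ghostlyingovergamma}) matches the paper, but the way you set up the key step contains a genuine gap. A monomorphism of Nisnevich sheaves must be checked on all Nisnevich stalks, i.e.\ on $U=\Spec R$ with $(R,\mathfrak m)$ an essentially smooth \emph{Henselian local} $k$-algebra of arbitrary dimension --- not merely on sections over $\A^1_F$ for finitely generated fields $F$. Your reduction to field-valued points is not the reduction used in \cite{bhs}, \cite{bs15}, and no argument is given for why injectivity on such points would suffice. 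This matters because it is exactly what makes your next step illusory: for a Henselian local $U$ of dimension $\geq 1$, two sections $\alpha_1,\alpha_2\in\widetilde{X}(U)$ connected by a $2$-ghost homotopy lie, by \cite[Lemma 2.12]{bs19}, over a morphism $\gamma\colon U\to B$ which is ``constant'' only in the sense that it is independent of the homotopy parameter; it is in general \emph{not} a constant map to a point $b\in B$. It can be dominant onto $B$, dominate a curve meeting $W$, or send the closed point into $W$ while the generic point goes elsewhere, and Proposition~\ref{n-ghostlyingovergamma} is precisely the statement for this general $\gamma$ --- its proof occupies Section~\ref{S4} and requires the case analysis on the codimensions of $\gamma(\eta),\gamma(u)$ together with the explicit blow-up computations of Lemma~\ref{lemmaA} and Lemma~\ref{lemmaB} (the radical condition $r_2/r_1-1\in rad\langle r_1,s/r_1\rangle$ and the construction of an explicit $1$-ghost homotopy).

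By assuming $\gamma$ is a constant point $b$, you reduce everything to the easy observation that fibres of $\widetilde{X}\to B$ are trees of rational curves, and the actual difficulty never arises in your framework; the bridging claim ``after refining $h$ locally on $\A^1_F$, one expects to be able to arrange that $h$ itself lies over the constant section $\gamma=b$'' is exactly the missing content, and over the stalks that matter it cannot be arranged. Two smaller inaccuracies compound this: Proposition~\ref{n-ghostlyingovergamma} concerns $n$-ghost homotopies between sections of $\widetilde{X}$ itself over a Henselian local $U$ (one first lifts elements of $\mathcal{S}^2(\widetilde{X})(U)$ to $\widetilde{X}(U)$ using that the canonical maps are epimorphisms and $U$ is a Nisnevich point), not ``ghost homotopies in $\mathcal{S}^2(\widetilde{X})$''; and the relevant consequence of rigidity in the paper is the constancy of the composite homotopy $Sp(\mathcal H)\to B$, giving $\pi\circ\alpha_1=\pi\circ\alpha_2=\gamma$, rather than any statement about $B(\A^1_F)$.
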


As a consequence, we deduce the following
\begin{corollary}[Corollary~\ref{cor2}]
For $\widetilde{X}$ satisfying the assumptions of Theorem~\ref{MT2}, $\pi_0^{\A^1}(\widetilde{X})$ is $\A^1$-invariant.

\end{corollary}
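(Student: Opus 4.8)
The plan is to deduce $\A^1$-invariance of $\pi_0^{\A^1}(\widetilde{X})$ from the stabilization statement in Theorem~\ref{MT2} together with the criterion of~\cite[Theorem 2.16]{bhs}, which says that $\pi_0^{\A^1}(\widetilde{X}) \to \mathcal{L}(\widetilde{X})$ is an isomorphism precisely when $\pi_0^{\A^1}(\widetilde{X})$ is $\A^1$-invariant. So it suffices to show that the colimit $\mathcal{L}(\widetilde{X}) = \colim_{n\geq 0}\mathcal{S}^n(\widetilde{X})$ is already computed at a finite stage, and more precisely that $\mathcal{S}^2(\widetilde{X}) \simeq \mathcal{L}(\widetilde{X})$.

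First I would invoke Theorem~\ref{MT2}: the epimorphism $\mathcal{S}^2(\widetilde{X}) \to \mathcal{S}^3(\widetilde{X})$ is an isomorphism. Next I would argue that once one transition map in the tower stabilizes, all later ones do: applying the functor $\mathcal{S}(-)$ (the Nisnevich sheafification of $V \mapsto \pi_0(Sing_*^{\A^1}(-)(V))$) to the isomorphism $\mathcal{S}^2(\widetilde{X}) \isomto \mathcal{S}^3(\widetilde{X})$ yields $\mathcal{S}^3(\widetilde{X}) \isomto \mathcal{S}^4(\widetilde{X})$, and inductively $\mathcal{S}^n(\widetilde{X}) \isomto \mathcal{S}^{n+1}(\widetilde{X})$ for all $n \geq 2$. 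Hence the tower $\{\mathcal{S}^n(\widetilde{X})\}_{n \geq 0}$ is eventually constant with value $\mathcal{S}^2(\widetilde{X})$, so the canonical map $\mathcal{S}^2(\widetilde{X}) \to \mathcal{L}(\widetilde{X}) = \colim_{n}\mathcal{S}^n(\widetilde{X})$ is an isomorphism.

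Finally, since each transition map $\mathcal{S}^n(\widetilde{X}) \to \mathcal{S}^{n+1}(\widetilde{X})$ factors the canonical surjection $\pi_0^{\A^1}(\widetilde{X}) \to \mathcal{S}^n(\widetilde{X})$, and $\mathcal{S}^2(\widetilde{X}) \simeq \mathcal{L}(\widetilde{X})$, the canonical map $\pi_0^{\A^1}(\widetilde{X}) \to \mathcal{L}(\widetilde{X})$ is identified with $\pi_0^{\A^1}(\widetilde{X}) \to \mathcal{S}^2(\widetilde{X})$; but the defining universal property of $\mathcal{L}$ together with~\cite[Theorem 2.16]{bhs} forces this to be an isomorphism as soon as the tower stabilizes, and we conclude that $\pi_0^{\A^1}(\widetilde{X})$ is $\A^1$-invariant. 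The one point requiring a little care — the main (though still routine) obstacle — is justifying that $\mathcal{S}(-)$ preserves the relevant isomorphism and that stabilization of a single step propagates up the tower; this is formal once one notes $\mathcal{S}$ is a functor and the tower is built by iterating it, and it is exactly the mechanism already used in~\cite{bhs} and~\cite{bs15}.
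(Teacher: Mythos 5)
Your first two steps are fine and do reflect part of what the paper leaves implicit: applying the functor $\mathcal{S}$ to the isomorphism $\mathcal{S}^2(\widetilde{X})\isomto\mathcal{S}^3(\widetilde{X})$ propagates stabilization up the tower, so $\mathcal{S}^2(\widetilde{X})\simeq\mathcal{L}(\widetilde{X})$. The gap is in your last paragraph. There is no ``canonical surjection $\pi_0^{\A^1}(\widetilde{X})\to\mathcal{S}^n(\widetilde{X})$'': the canonical maps are $\widetilde{X}\to\mathcal{S}^n(\widetilde{X})$ and $\pi_0^{\A^1}(\widetilde{X})\to\mathcal{L}(\widetilde{X})$ (the latter existing because $\mathcal{L}$ is $\A^1$-invariant); a map from $\pi_0^{\A^1}(\widetilde{X})$ to a finite stage would require that stage to be $\A^1$-invariant, which is exactly what is at issue (indeed $\mathcal{S}(\widetilde{X})\to\mathcal{S}^2(\widetilde{X})$ fails to be a monomorphism here). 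More seriously, stabilization of the tower does \emph{not} ``force'' $\pi_0^{\A^1}(\widetilde{X})\to\mathcal{L}(\widetilde{X})$ to be an isomorphism: the cited criterion of \cite{bhs} is an equivalence between that map being an isomorphism and $\pi_0^{\A^1}(\widetilde{X})$ being $\A^1$-invariant, so it cannot be used to deduce either statement from $\mathcal{S}^2\simeq\mathcal{L}$ alone. Knowing $\mathcal{L}(\widetilde{X})=\mathcal{S}^2(\widetilde{X})$ gives surjectivity of $\pi_0^{\A^1}(\widetilde{X})\to\mathcal{L}(\widetilde{X})$ for free, but says nothing about injectivity; if it did, Morel's conjecture would reduce entirely to a stabilization question, which is not the case.

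What is actually needed, and what the paper supplies, is a splitting: a morphism $\mathcal{S}^2(\widetilde{X})\simeq\mathcal{L}(\widetilde{X})\to\pi_0^{\A^1}(\widetilde{X})$ retracting the canonical map. This is where the geometric content enters: Proposition~\ref{n-ghostlyingovergamma} and the lemmas behind Theorem~\ref{a1rigidsurface} do not merely show that $2$-ghost homotopic sections over a Henselian local $U$ are $1$-ghost homotopic, they also record that such sections \emph{map to the same element of $\pi_0^{\A^1}(\widetilde{X})(U)$}, via \cite[Lemma 4.1]{bhs} applied to the explicit ghost homotopies constructed there. This is precisely what lets the epimorphism $\widetilde{X}\to\pi_0^{\A^1}(\widetilde{X})$ factor through $\mathcal{S}^2(\widetilde{X})$, producing the retract; one then concludes by \cite[Lemma 2.16]{bhs}, since $\pi_0^{\A^1}(\widetilde{X})$ is thereby a retract of the $\A^1$-invariant sheaf $\mathcal{L}(\widetilde{X})$. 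Your proposal never constructs this retract (or any injectivity statement), so the final implication is unsupported; the ``routine'' point you flag (that $\mathcal{S}$ preserves isomorphisms) is not the real obstacle.
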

We also provide in Remark~\ref{sing*}, examples  of smooth projective varieties $\widetilde{X}$ of dimension 3 such that $Sing_*^{\A^1}(\widetilde{X})$ is not $\A^1$-local. 

\paragraph{Structure of the paper} In Section~\ref{S2}, we recall some definitions and basic facts on Ghost homotopies from the literature relevant to this paper. In Section~\ref{S3} we prove Theorem~\ref{a1rigidsurface} and Theorem~\ref{nonunisurface}, which are the main results of the paper. The Proposition~\ref{n-ghostlyingovergamma} is key step in the proofs of the main results and its proof is deferred to the Section~\ref{S4}.  
\begin{notation}
Throughout the paper, $k$ will denote a fixed base field. By a variety over $k$ we mean an integral, separated, finite type scheme over $\Spec k$. We refer to \cite{har} for basic definitions and facts in Algebraic Geometry. 
\end{notation}




\section{Preliminaries} 
\label{S2}
In this section, we recall some basic definitions and standard facts relevant to our discussion.

Let $\Sm/k$ denote the big site of smooth, finite type, separated schemes over $\Spec k$ with the Nisnevich topology. 
Let $\Delta^{op}(\PShv(\Sm/k))$ ( and $\Delta^{op}(\Shv(\Sm/k))$) denote the category of simplicial presheaves (resp. sheaves) of sets. We consider all presheaves of sets on $\Sm/k$ as constant simplicial presheaves.
By inverting the weak eqivalences on the simplicial model category $\Delta^{op}(\Shv(\Sm/k))$, 
we get the homotopy category $\mathcal{H}_s(k)$.

By the technique of Bousfield localization with respect to the collection of the projection morphisms given by $\mathcal{X}\times\A^1\to \mathcal{X}$ for the objects $\mathcal{X}$ in  $\Delta^{op}(\Shv(\Sm/k))$, one gets $\A^1$-local model structure on  $\Delta^{op}(\Shv(\Sm/k))$ with $\A^1$-local weak equivalences. By inverting the $\A^1$-local weak equivalences, F. Morel and V. Voevodsky in \cite{mv99} constructed the $\A^1$-homotopy category $\mathcal{H}(k)$.

There exists an $\A^1$-fibrant replacement functor 
$$L_{\A^1}: \Delta^{op}(\Shv(\Sm/k)) \to  \Delta^{op}(\Shv(\Sm/k))$$
such that for any object $\mathcal{X},$ there is a canonical morphism $\mathcal{X}\to L_{\A^1}(\mathcal{X})$ and $L_{\A^1}(\mathcal{X})$  is an $ \A^1$-fibrant object.
We refer to \cite[\S 2.1, Theorem 1.66 and p. 69]{mv99} for the construction of the $\A^1$-fibrant replacement functor and more details.
\begin{definition}

For any $\mathcal{X} \in \Delta^{op}(\Shv(\Sm/k))$, $\pi_0^s(\mathcal{X})$ of $\mathcal{X}$ on $\Sm/k$ 
is the Nisnevich sheafification of the presheaf 
\begin{center}
$U\mapsto \pi_0(\mathcal{X})(U)= Hom_{\mathcal{H}_s(k)}(U, \mathcal{X})$
\end{center}
for $ U\in \Sm/k$.
\end{definition}
\begin{definition}

For any $\mathcal{X} \in \Delta^{op}(\Shv(\Sm/k))$, the sheaf of $\A^1$-connected components $\pi_0^{\A^1}(\mathcal{X})$ of $\mathcal{X}$ on $\Sm/k$ 
is defined as $\pi_0^s(L_{\A^1}(\mathcal{X}))$.
\end{definition}

\begin{definition}
\label{invariant}
A presheaf (or sheaf) of sets $\mathcal{F}$ on $\Sm/k$ is defined to be $\A^1$-invariant if the morphism $$ \mathcal{F}(U)\to\mathcal{F}(U\times\A^1)$$ induced by the canonical projection $U\times\A^1 \to U$ is a bijection for all $U\in \Sm/k.$
\end{definition}
\begin{remark}\label{a1rigid}
A scheme $X$ over $\Spec k$ is called $\A^1$-rigid, if the associated presheaf of sets $h_X:=\Hom_{Sch/k}(-, X)$ is $\A^1$-invariant in the sense of above Definition~\ref{invariant}.  
\end{remark}
The presheaf $U\mapsto \pi_0(L_{\A^1}(\mathcal{X}))(U)$ is $\A^1$-invariant, but that its Nisnevich sheafification $\pi_0^{\A^1}(\mathcal{X})$ is $\A^1$-invariant is conjectured by  F. Morel [Conjecture~\ref{morelconj}].

There is an analogous notion to path-connected components of a topological space in this setting. We recall the definition of the  \textit{Morel-Voevodsky singular functor} $Sing_*^{\A^1}$. 
Let $\Delta_{\bullet}$ be the cosimplicial sheaf associated  to the cosimplicial scheme 
$$\Delta_{n}=\Spec \dfrac{k[x_0, x_1,\cdots,  x_n]}{(\sum x_i-1)}.$$
For a simplcial presheaf $\mathcal{X}$ (or sheaf $\mathcal{X}$), $Sing_*^{\A^1}(\mathcal{X})$ is defined as the diagonal of the bisimplicial presheaf (or sheaf) $\underline{\Hom}(\Delta_{\bullet}, \mathcal{X})$, where $\underline{\Hom}$ is the internal hom.

\begin{definition}Let $\mathcal{F}$  be a sheaf of sets and $U$ be an essentially  smooth scheme over a field $k$. Let $n\geq 0$ be an integer. Let $\sigma_0: U\simeq U\times \{0\}\to U\times\A^1$ and $\sigma_1: U\simeq U\times \{1\}\to U\times\A^1$ be the closed immersions. 
\begin{enumerate}\label{a1chain}
\item $t_1, t_2 \in \mathcal{F}(U)$ are $\A^1$-homotopic if there is $h\in \mathcal{F}(U\times\A^1) $, such that $\sigma_0^*(h)=t_1$ and $\sigma_1^*(h)=t_2$. 
\item $t_1, t_2 \in \mathcal{F}(U)$ are $\A^1$-chain homotopic, if there is a sequence of $(h_1,\cdots, h_r)$ of $\A^1$-homotopies $ h_j\in \mathcal{F}(U\times\A^1) $, such that $\sigma_0^*(h_1)=t_1,$ 
 $\sigma_0^*(h_{i+1})= \sigma_1^*(h_i)$ and $ \sigma_1^*(h_r)=t_2$. 
\end{enumerate}
\end{definition}

\begin{definition} 
\label{aonechaindef}
Let $\mathcal{F}$ be a Nisnevich sheaf of sets on $\Sm/k$. Define the sheaf of  $\A^1$-chain connected components of $\mathcal{F}$ denoted by $\mathcal{S}(\mathcal{F})$ to be the Nisnevich sheafification of the presheaf on $\Sm/k$
\begin{center}
$V\mapsto \pi_0(Sing_*^{\A^1}\mathcal{F}(V))$
\end{center}
for $V\in \Sm/k$. In other words, $\mathcal{S}(\mathcal{F})$ agrees with the Nisnevich sheafification of the presheaf on $\Sm/k$
\begin{center}
$V\mapsto \mathcal{F}(V)/\sim$
\end{center}
for $V\in \Sm/k$, where $\sim$ is the equivalence relation given by $\A^1$-chain homotopy.
\end{definition}

%
%
 
 From here onwards, let $X \in Sch/k$, the category of schemes over $k$. We denote the representable sheaf associated to the scheme $X$ also by $X$. Let $\mathcal{S}(X)$ denote the sheaf of $\A^1$-chain connected components of $X$. 
 
 We denote $\mathcal{S}^0(X):=X$ and $\mathcal{S}^1(X):=\mathcal{S}(X)$.
  One can iterate this notion and define for $n>1$, $\mathcal{S}^n(X):=\mathcal{S}(\mathcal{S}^{n-1}(X))$ by applying the Definition~\ref{aonechaindef} to the sheaf $\mathcal{S}^{n-1}(X)$.

 We have a sequence of morphisms of sheaves (all arrows are epimorphisms)
 \begin{center}
 $X\to \mathcal{S}(X)\to \mathcal{S}^2(X)\to\cdots\to  \mathcal{S}^n(X)\to  \mathcal{S}^{n+1}(X) \to\cdots       $
 \end{center}
The \textit{universal $\A^1$-invariant quotient of $X$} (denoted by $\mathcal{L}(X)$) is defined as
 \begin{equation}
 \label{uniquotient}
     \mathcal{L}(X):=\colim_{n\geq 0}  \mathcal{S}^n(X)
 \end{equation} the colimit of the sequence of the sheaves $\{\mathcal{S}^n(X)\}_{n\geq 0}$. 
There is a canonical map  $\pi_0^{\A^1}(\mathcal{X})\to \mathcal{L}(\mathcal{X}) $ which is an isomorphism if and only if the sheaf $\pi_0^{\A^1}(\mathcal{X})$ is $\A^1$-invariant
\cite[Theorem 2.13, Remark 2.14, Corollary 2.18]{bhs}. Thus, to conclude $\A^1$-invariance of $\pi_0^{\A^1}(\mathcal{X})$ for a scheme $X$, one strategy is to study the iterations $\mathcal{S}^n(X)$ as $n$ varies. For this one needs to study the $\A^1$-homotopies of the sheaves $\mathcal{S}^n(X)$. An $\A^1$-homotopy of the sheaves $\mathcal{S}^n(X)$ can be ``lifted" Nisnevich locally to \textit{ghost homotopies} on $X$.
In the rest of this section, we recall the notions of \textit{ghost homotopies} and the \textit{space of a ghost homotopy} from \cite{bhs}. We also recall results on ghost homotopies of blow up from \cite{bs19} relevant to our setting. 
\begin{definition}\label{ghostdef}

Let $\mathcal{F}$  be a sheaf of sets and $U$ be an essentially  smooth scheme over a field $k$. Let $n\geq 0$ be an integer. Let $t_1, t_2 \in \mathcal{F}(U)$. We define the notion of $n$-ghost homotopy from $t_1$ to $t_2$, inductively on $n$.
\begin{enumerate}
    \item  $t_1, t_2 \in \mathcal{F}(U)$ are 0-ghost homotopic if $t_1, t_2$ are $\A^1$-chain homotopic in the sense of Definition~\ref{a1chain}. 
 \item  Suppose the notion of $m$-ghost homotopy is defined for $0\leq m<n$. Given $t_1, t_2 \in \mathcal{F}(U)$, an $n$-ghost homotopy from $t_1$ to $t_2$ consists of following data
 $$  \mathcal{H}=(V\to\A^1_U, W\to V\times_{\A^1_U}V, \widetilde{\sigma}_0, \widetilde{\sigma}_1, h, \mathcal{H}^W)   $$
where 
\begin{enumerate}
    \item $V\to\A^1_U$ is a Nisnevich cover of $\A^1_U$.
    \item For $i=0,1$, $\widetilde{\sigma}_i$ is a lift $U\to V$ of ${\sigma}:U\to U\times \{i\} \to\A^1_U $.
    \item $ W\to V\times_{\A^1_U}V$ is a Nisnevich cover of $ V\times_{\A^1_U}V$.
    \item $h:V\to \mathcal{F}$ is  a morphism such that $h\circ \widetilde{\sigma}_i=t_i$ for $i=1, 2$.
    \item $\mathcal{H}^W=(h_1, \cdots, h_r)$ is a chain of $(n-1)$-ghost homotopies connecting the two morphisms $$ W\to V\times_{\A^1_U}V\rightrightarrows V$$ where $V\times_{\A^1_U}V\xrightarrow{pr_i} V$ are the projections for $i=1,2$.
    
\end{enumerate}
\end{enumerate}
\end{definition}

We also recall the notion of the \textit{total space of a ghost homotopy}. 

\begin{definition}
Let $\mathcal{F}$  be a sheaf of sets and $U$ be an essentially  smooth scheme over a field $k$. Let $n\geq 0$ be an integer. For an $n$-ghost homotopy on $U$, inductively on $n$, we define the \textit{total space of the $n$-ghost homotopy} as a scheme denoted by $Sp(\mathcal{H})$ and morphisms $f_{\mathcal{H}}:Sp(\mathcal{H})\to U$ and $h_{\mathcal{H}}:Sp(\mathcal{H})\to\mathcal{F}$. 
 \begin{enumerate}
     \item For a 0-ghost homotopy, define $ Sp(\mathcal{H}):=\A^1\times U.$
     The morphism $f_{\mathcal{H}}:Sp(\mathcal{H})\to U$ is the canonical projection $\A^1\times U\to U$ and 
     $h_{\mathcal{H}}:Sp(\mathcal{H})\to\mathcal{F}$ is given by the homotopy $h$.
     \item Suppose the space $ Sp(\mathcal{H})$ is defined for $(n-1)$-ghost homotopy. Suppose an $n$-ghost homotopy is given by 
 $$  \mathcal{H}=(V\to\A^1_U, W\to V\times_{\A^1_U}V, \widetilde{\sigma}_0, \widetilde{\sigma}_1, h, \mathcal{H}^W)  $$
 with $\mathcal{H}^W=(h_1, \cdots, h_r)$ a chain of $(n-1)$-ghost homotopies. Let $Sp(\mathcal{H}_j)$ be the total space of the $(n-1)$-ghost homotopies $h_j$, then define
 $$Sp(\mathcal{H}):=V\coprod \Big(\coprod_{j=1}^r Sp(\mathcal{H}_j)\Big).$$
 The morphisms $f_{\mathcal{H}}:Sp(\mathcal{H})\to U$ and $h_{\mathcal{H}}:Sp(\mathcal{H})\to\mathcal{F}$ are given by the morphisms on various components. On $V$, $f_{\mathcal{H}}\mid_V:=V\xrightarrow{h} \A^1_U\to U$, and on $Sp(\mathcal{H}_j)$, $f_{\mathcal{H}}\mid {Sp(\mathcal{H}_j)}$ is defined as $$Sp(\mathcal{H}_j)\xrightarrow{f_{{\mathcal{H}}_j}} W\to  V\times_{\A^1_U}V \xrightarrow{pr_i} V\to \A^1_U\to U.$$
 Here $pr_i$
  can be either of the two projections, as the composition
 does not depend on the choice.
 The morphism $h_{\mathcal{H}}$ is defined as 
 $h_{\mathcal{H}}\mid_V:=V\xrightarrow{h}\mathcal{F}$ and $h_{\mathcal{H}}\mid_{Sp(\mathcal{H}_j)} :Sp({\mathcal{H}}_j)\xrightarrow{h_{{\mathcal{H}}_j}} \mathcal{F}$.
 \end{enumerate}
\end{definition}

\begin{section}{Main results}
\label{S3}
We assume throughout the rest of the paper that $k$ is a perfect field. 
We recall the definition of $\pone$-fibration. 
\begin{definition}
A morphism of $k$-schemes $\pi: X\to B$ over $\Spec k$ is called $\pone$-fibration if $\pi$ is a smooth, proper morphism such that for every point $b\in B$, the fiber $\pi^{-1}(b)$ is isomorphic to $\pone_{k(b)}$. 
\end{definition}

\begin{remark}\label{rmk0}
If a morphism of $k$-varieties $\pi: X\to B$ over $\Spec k$ is a $\pone$-fibration, then $\pi$ is \'etale-locally trivial $\pone$-fiber bundle (\cite[Lemma 3.12]{bhs}). 
\end{remark}

We consider the geometric situation we are interested in. The notation considered in the paragraph \# below, will be used throughout the rest of the paper, unless mentioned otherwise. 
 \paragraph{\#}
 Let $B$ be a smooth proper variety over $k$ of dimension 2.
  Let $X$ be a smooth proper variety over $k$ and $\pi: X\to B$ be a morphism of schemes over $k$ such that $\pi$ is a $\mathbb{P}^1$-fibration. Let $W$ be a smooth irreducible curve in $B$. Let $Z$ be a smooth closed subscheme of $X$ such that $Z$ is 
 a section of the $\pone$-fibration on $W$, pulled back from $X$. 
 Let $\widetilde{X}$ be the blow up of $X$ along the closed subscheme $Z$. 
 We summarize the above paragraph in the following commutative diagram. 
  $$\xymatrix{ 
                \~X:=Bl_Z X \ar[d] & &\\
       X\ar[d]^{\pi(\pone-\text{fib.})} & \pi^{-1}(W)\ar@{_{(}->}[l]\ar[d] & Z\ar@{_{(}->}[l]\\
             B  &W\ar@{_{(}->}[l]\ar[ru]^{s} &
 }$$
 where $s$ is a section of the $\pone$-fibration $\pi\mid_{\pi^{-1}(W)}:  \pi^{-1}(W)\to W$ so that $s(W)=Z$.

 In order to understand $\mathcal{S}^n(\~X)$ we need to understand its Nisnevich stalks \ie $\mathcal{S}^n(\~X)(U)$ for $U=\Spec R$, where $(R, \mathfrak{m})$ is the Henselization of the local ring at a smooth point of a variety over $k$. Given two sections $\alpha_1, \alpha_2\in\mathcal{S}^n(\~X) $, we need to study the $n$-ghost homotopies $h_{\mathcal{H}}:Sp({\mathcal{H}})\to \widetilde{X}$ of $\~X$ from $\alpha_1$ to $\alpha_2$.

 
 One important situation is when there is a morphism $\gamma:U\to B$ such that the $n$-ghost homotopy $h_{\calH}$ lies over a morphism $\gamma$ (in the sense of the Definition~\ref{liesovergamma}), so that we have the following commutative diagram 
 $$\xymatrix{ 
    Sp(\calH) \ar[dd]^{f_{\calH}} \ar[r]^{h_{\calH}}               &  \~X \ar[d]\\
        & X\ar[d]^{\pi}\\
            U\ar[r]^{\gamma}    & B 
 }$$
In the case of, an $n$-ghost homotopy satisfies the above property, we prove the following proposition. 

\begin{proposition}
\label{n-ghostlyingovergamma}
Let $\alpha_1$, $\alpha_2$ be sections of $\widetilde{X}$ over $U$ which are connected by an $n$-ghost homotopy $h_{\mathcal{H}}:Sp({\mathcal{H}})\to \widetilde{X}$ for some $n>0$. We further assume that, there is a morphism $\gamma:U\to B$ such that the $n$-ghost homotopy lies over the morphism $\gamma$ in the sense of the Definition~\ref{liesovergamma}. Then the sections  $\alpha_1$, $\alpha_2$ are 1-ghost homotopic and map to the same element in  $\pi_0^{\A^1}(\widetilde{X})(U)$.

\end{proposition}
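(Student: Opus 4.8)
The plan is to use the fact that the $n$-ghost homotopy $h_{\mathcal{H}}$ lies over $\gamma: U \to B$ to reduce the analysis from $\widetilde{X}$ to the fiber product $X \times_B U$, which is a $\pone$-bundle (étale-locally trivial by Remark~\ref{rmk0}) over the Henselian local scheme $U$, hence in fact a trivial $\pone$-bundle $\pone_U$ over $U$. The blow-up center $Z$ pulls back, along $\gamma$, to a closed subscheme of $\pone_U$ that is supported over the curve $W \subseteq B$; since $\gamma^{-1}(W)$ is a proper closed subscheme of $U$ (or all of $U$, a case one must handle separately), the pulled-back center $Z_U$ sits inside a divisor of the form $\{g = 0\} \times_U \pone_U$ for some $g \in R$ that is not a unit. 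The total space $Sp(\mathcal{H})$ maps via $f_{\mathcal{H}}$ to $U$ and via $h_{\mathcal{H}}$ into $\widetilde{X}$, which factors through $\widetilde{X} \times_B U = Bl_{Z_U}(\pone_U)$. So the whole situation is governed by ghost homotopies of sections of a blow-up of a trivial $\pone$-bundle along a curve-section over a Henselian local base — exactly the setting treated in \cite{bs19}.

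First I would make precise the reduction: pull the entire ghost homotopy datum back along $\gamma$ to obtain an $n$-ghost homotopy of $\widetilde{X} \times_B U$ over $U$ connecting $\alpha_1, \alpha_2$ (viewed now as sections of $\widetilde{X} \times_B U \to U$). Next I would invoke the structural result on ghost homotopies of blow-ups (the analogue of \cite[Proposition 3.7]{bs19}, i.e. the commented-out Lemma~\ref{n-ghost} in the excerpt): for the blow-up along the ideal sheaf $\mathcal{I}_{Z_U}$, there is an element $r \in R$ with $\langle r \rangle \supseteq \langle g \rangle$ (so $r$ is also a non-unit, as $g$ is) such that $h_{\mathcal{H}}^*(\mathcal{I}_{Z_U})$ is generated by $f_{\mathcal{H}}^*(r)$, and $r$ generates both $\alpha_1^*(\mathcal{I}_{Z_U})$ and $\alpha_2^*(\mathcal{I}_{Z_U})$. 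The upshot is that on the open locus $\{r \ne 0\} = U \setminus \{r = 0\}$, the blow-up map $\widetilde{X} \times_B U \to \pone_U$ restricts to an isomorphism over the preimage, and the entire ghost homotopy can be "pushed down" to an honest chain homotopy in $\pone_U$ that avoids $Z_U$; since $\pone_U \to U$ with a disjoint section removed is $\A^1$-rigid-like (it admits a trivializing $\A^1$ or $\gm$ chart), the two sections become $1$-ghost homotopic. More carefully: the image of $Sp(\mathcal{H})$ under $Sp(\mathcal{H}) \xrightarrow{h_{\mathcal{H}}} \widetilde{X} \to X \times_B U = \pone_U$ lands, Nisnevich-locally, in the complement of $Z_U$ (cf. Remark~\ref{rmkghost}), and on that complement the projection $\pone_U \setminus Z_U \to U$ together with the structure of $Z_U$ as a section over a divisor lets one contract the homotopy to a single $\A^1$-homotopy after a Nisnevich refinement — giving the $1$-ghost homotopy claimed.

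For the second assertion — that $\alpha_1, \alpha_2$ map to the same element of $\pi_0^{\A^1}(\widetilde{X})(U)$ — I would note that any $n$-ghost homotopy, in particular a $1$-ghost homotopy, gives an identification in $\pi_0^{\A^1}$: the total space $Sp(\mathcal{H})$ together with $h_{\mathcal{H}}$ realizes a zig-zag of $\A^1$-homotopies and Nisnevich covers, all of which become identities in $\mathscr{H}(k)$ after $\A^1$-localization, so $\alpha_1$ and $\alpha_2$ have the same image in $\pi_0^s(L_{\A^1}\widetilde{X})(U) = \pi_0^{\A^1}(\widetilde{X})(U)$. Alternatively this follows formally once one knows they are $\A^1$-chain homotopic (hence equal in $\mathcal{S}(\widetilde{X})(U)$), because the canonical map $\mathcal{S}(\widetilde{X}) \to \pi_0^{\A^1}(\widetilde{X})$ is well-defined and the $1$-ghost homotopy produced above descends, after the Nisnevich refinement, to an actual chain homotopy between the images.

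The main obstacle I anticipate is the inductive bookkeeping in the reduction: an $n$-ghost homotopy is a nested tree of Nisnevich covers $V \to \A^1_U$, $W \to V \times_{\A^1_U} V$, lifts $\widetilde{\sigma}_i$, and lower ghost homotopies $\mathcal{H}^W$, and one must check that "lying over $\gamma$" propagates compatibly down the entire tree so that the divisor $\{g=0\}$ — equivalently the generator $r$ — controls every node simultaneously. Making the descent from the multi-level ghost structure to a single honest $1$-ghost homotopy precise, while keeping track of which Nisnevich neighborhood of $U$ one has passed to, is where the real work lies; the geometry (trivializing the $\pone$-bundle, the blow-up being an isomorphism away from $\{r=0\}$) is comparatively straightforward once the reduction is set up cleanly.
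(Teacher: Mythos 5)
Your reduction step is essentially the paper's: pull the ghost homotopy back along $\gamma$, use that $X\times_{\gamma,B}U$ is a trivial $\pone$-bundle over the Henselian local $U$, and reduce to sections of a blow-up $X_\gamma$ of $\pone_U$ along an ideal of the form $\langle J, x_0\rangle$, with \cite[Proposition 3.7]{bs19} supplying a common generator $r$ of $\alpha_1^*(\mathcal{I}_\gamma)=\alpha_2^*(\mathcal{I}_\gamma)$ and of $h_{\mathcal{H}}^*(\mathcal{I}_\gamma)$. But the heart of the matter is what you do after that, and there your argument has a genuine gap: you claim the ghost homotopy can be ``pushed down to an honest chain homotopy in $\pone_U$ that avoids $Z_U$,'' so that $\alpha_1,\alpha_2$ become ($1$-ghost, in fact chain) homotopic because the complement of the section is ``$\A^1$-rigid-like.'' This is wrong on two counts. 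First, $\pone_U$ minus one section is $\A^1_U$, which is the opposite of $\A^1$-rigid, and the sections need not avoid the center at all: the difficult case, treated in the paper's Lemma~\ref{lemmaB}, is precisely when $r=r_1\notin J$, i.e.\ the sections meet the locus over $\gamma^{-1}(W)$ and the blow-up is not an isomorphism there. Second, if your argument worked it would show $\alpha_1,\alpha_2$ are $\A^1$-chain homotopic on $\widetilde X$, contradicting Theorem~\ref{a1rigidsurface}(3) (equivalently the example after Proposition~\ref{aonechain}), where $1$-ghost homotopic sections are exhibited that are \emph{not} chain homotopic. What is actually needed, and what the paper does, is: (i) derive from the $n$-ghost homotopy the necessary algebraic condition $r_2/r_1-1\in rad\langle r_1, s/r_1\rangle$ for all $s\in J$, by lifting to an auxiliary iterated blow-up $X_{r,s/r}$ and using $\A^1$-rigidity of $\G_m$ (the complement of \emph{two} sections), and then (ii) use that condition to build an explicit $1$-ghost homotopy over the two-piece Zariski cover $V_1\coprod V_2$ of $\A^1_U$ (with $V_1=\A^1_U-\mathcal{Z}(\langle (J:\langle r\rangle),1+\delta S\rangle)$, $V_2=\A^1_U-\mathcal{Z}(\langle r\rangle)$), checking that the pieces lift to the blow-up. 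None of this is present, or replaceable by the avoidance argument you sketch; the remark you lean on only gives avoidance of the center over points where $\alpha_1$ already avoids it.

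Two smaller omissions: the case you flag and defer, where $\gamma$ maps $U$ entirely into $W$ (so $\gamma^{-1}(W)=U$ and $J$ is not proper), genuinely needs a separate argument --- in the paper the homotopy then factors through the total transform $E\cup V'$ of $\pi^{-1}(W)$, a union of geometrically ruled surfaces over curves, and one invokes \cite[Proposition 2.12]{bs19}; and the easy cases (e.g.\ $\gamma(u)$ or $\gamma(\eta)$ generic, or $\gamma$ missing $W$) are handled by the linear-homotopy trick in a trivializing chart, which your reduction would also give but which you should state, since the ideal $J$ and hence the shape of the argument depends on where $\gamma(\eta)$ and $\gamma(u)$ sit relative to $W$.
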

We prove this proposition in Section~\ref{S4}. This proposition will serve as key step in the proofs of the main results of the paper. In subsection~\ref{a1rigidsurface}, we consider the situation when the base $B$ is an $\A^1$-rigid surface and in subsection~\ref{nonunisurface}, we consider when the base $B$ is a non-uniruled surface.
\subsection{Threefolds admitting $\pone$-fibration over $\A^1$-rigid surface}

Keeping the notation from (\#), we will prove the following
\begin{theorem}
\label{a1rigidsurface}
  Let $B$, $X$ and $\widetilde{X}$ be as described in (\#). Moreover assume that $B$ is $\A^1$-rigid. 
 Then 

\begin{enumerate}
\item $\mathcal{S}(X)\simeq B$
\item $\mathcal{S}^2(\widetilde{X})\simeq\mathcal{S}^3(\widetilde{X})$
\item $\mathcal{S}(\widetilde{X})\to \mathcal{S}^2(\widetilde{X})$ is not a monomorphism.

\end{enumerate}

\end{theorem}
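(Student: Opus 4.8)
\textbf{Proof plan for Theorem~\ref{a1rigidsurface}.}

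\emph{Part (1).} The plan is to exploit $\A^1$-rigidity of $B$ together with the fact that $\pi\colon X\to B$ is a $\pone$-fibration, hence Zariski-locally (or at least \'etale-locally, by Remark~\ref{rmk0}) a projective bundle. Since $B$ is $\A^1$-rigid, the composite $X\to B$ sends $\A^1$-homotopic sections over any $U$ to the same map $U\to B$; thus every $\A^1$-chain homotopy on $X$ takes place inside a single fiber $\pi^{-1}(\gamma)$ for some fixed $\gamma\colon U\to B$, which Nisnevich-locally is $\pone_U$. Two $R$-points of $\pone_R$ (for $R$ Henselian local) are always $\A^1$-chain homotopic — indeed $\pone$ is $\A^1$-chain connected over any base, one connects sections by lines. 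So on stalks the map $X(U)\to B(U)$ induces a bijection $\mathcal{S}(X)(U)\xrightarrow{\sim} B(U)$, and since $B$ is already $\A^1$-invariant it equals its own $\mathcal{S}$. This gives $\mathcal{S}(X)\simeq B$ after sheafifying; I would cite the corresponding statement for $\pone$-bundles over $\A^1$-rigid bases (this is essentially \cite[Lemma 3.12]{bhs} combined with $\A^1$-rigidity, in the spirit of the surface computations in \cite{bhs,bs19}).

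\emph{Part (2).} Here I would run the ghost-homotopy machinery. Take $U=\Spec R$ a Henselian local scheme at a smooth point, and two sections $\alpha_1,\alpha_2\in\mathcal{S}^2(\widetilde{X})(U)$ that are $\A^1$-chain homotopic in $\mathcal{S}^2(\widetilde{X})$, i.e.\ connected by a $2$-ghost homotopy $h_{\mathcal{H}}\colon Sp(\mathcal{H})\to\widetilde{X}$. The structure map $\widetilde{X}\to X\to B$ composes the ghost homotopy down to a $2$-ghost homotopy on $B$; since $B$ is $\A^1$-rigid, any ghost homotopy on $B$ over $U$ is constant, so there is a single $\gamma\colon U\to B$ over which the entire $2$-ghost homotopy $h_{\mathcal{H}}$ lies in the sense of Definition~\ref{liesovergamma}. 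Now Proposition~\ref{n-ghostlyingovergamma} applies directly: $\alpha_1,\alpha_2$ are then $1$-ghost homotopic and map to the same element of $\pi_0^{\A^1}(\widetilde{X})(U)$. Being $1$-ghost homotopic means they are already identified in $\mathcal{S}^2(\widetilde{X})(U)$ after one more iteration — more precisely, this shows the map $\mathcal{S}^2(\widetilde{X})(U)\to\mathcal{S}^3(\widetilde{X})(U)$ is injective on stalks (two elements becoming equal in $\mathcal{S}^3$ are already equal in $\mathcal{S}^2$), while it is always surjective; hence $\mathcal{S}^2(\widetilde{X})\xrightarrow{\sim}\mathcal{S}^3(\widetilde{X})$ after Nisnevich sheafification. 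The one point requiring care is that an $\A^1$-chain homotopy in $\mathcal{S}^2(\widetilde{X})$ does lift Nisnevich-locally to a $2$-ghost homotopy on $\widetilde{X}$ — this is the standard lifting statement recalled in Section~\ref{S2} (Definition~\ref{ghostdef} and the discussion following it), so I would just invoke it.

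\emph{Part (3).} To show $\mathcal{S}(\widetilde{X})\to\mathcal{S}^2(\widetilde{X})$ is not a monomorphism, I would produce, over some Henselian local $U=\Spec R$, two sections of $\widetilde{X}$ that are \emph{not} $\A^1$-chain homotopic on $\widetilde{X}$ (hence distinct in $\mathcal{S}(\widetilde{X})(U)$) but \emph{become} equal in $\mathcal{S}^2(\widetilde{X})(U)$. The natural candidates are sections coming from the exceptional divisor of the blow-up $\widetilde{X}=Bl_Z X$: take $U$ mapping into $B$ so that its closed point lands on the curve $W=\pi(Z)$ but its generic point does not, and pick $\alpha_1,\alpha_2$ two $R$-points of $\widetilde{X}$ which agree away from $Z$ but lie in different ``directions'' of the $\pone$-bundle of the exceptional divisor over the closed point. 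By Part (2)'s mechanism these are $1$-ghost homotopic (the relevant $1$-ghost homotopy lies over the constant $\gamma\colon U\to W\hookrightarrow B$), so they are equal in $\mathcal{S}^2(\widetilde{X})(U)$; but a genuine $\A^1$-chain homotopy on $\widetilde{X}$ between them would have to avoid/interact with $Z$ in a way forbidden by the ideal-sheaf constraint (the analogue of \cite[Proposition 3.7]{bs19}, with $r$ generating $\alpha_i^*(\mathcal{I}_Z)$), forcing the homotopy to degenerate — so no honest chain homotopy exists. I expect \emph{this part} to be the main obstacle: one must exhibit an explicit $R$ and explicit sections and then genuinely verify non-existence of an $\A^1$-chain homotopy on $\widetilde{X}$, which requires the blow-up obstruction computation rather than a soft argument. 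I would model the construction on the examples distinguishing $\mathcal{S}$ from $\pi_0^{\A^1}$ in \cite[Section 4]{bhs} and the birationally-ruled-surface analysis in \cite{bs19}, adapting them to the fibered threefold $\widetilde{X}$ here; the key is choosing the section data so that the exceptional $\pone$ over the closed point is genuinely not contractible within $\widetilde{X}$ while the first iteration collapses it.
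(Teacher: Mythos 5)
Your arguments for parts (1) and (2) coincide with the paper's: for (1) you reduce to stalks, use that sections with the same image in $B(U)$ factor through $X\times_{\gamma,B}U\simeq\pone_U$ and connect them by a linear homotopy; for (2) you compose a $2$-ghost homotopy down to $B$, use $\A^1$-rigidity (the paper cites \cite[Lemma 2.12]{bs19}) to see it is constant, hence lies over a single $\gamma\colon U\to B$, and then apply Proposition~\ref{n-ghostlyingovergamma}. Both steps are carried out correctly and are the same as in the paper.

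Part (3), however, is only a plan, and the missing step is precisely the substance of the paper's argument. To show that two sections are \emph{not} $\A^1$-chain homotopic on $\widetilde{X}$ one needs a sharp criterion for naive homotopies lifting to the blow-up; the paper invokes \cite[Proposition 4.10]{bs20} (recalled as Proposition~\ref{aonechain}): with $\alpha_i$ given by $x_0/x_1\mapsto r_i$ and $r'=r_0/r_1$, a lifting $\A^1$-chain homotopy exists if and only if $r_2$ is a unit multiple of $r_1$ and $r_2/r_1-1\in rad(\langle r_1\rangle)+rad(\langle r'\rangle)$, whereas ($1$-)ghost homotopy only forces $r_2/r_1-1\in rad(\langle r_1,r'\rangle)$. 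The counterexample is then purely algebraic: over $R=k[x,y]^h_{(x,y)}$ take $r_0=x(y^2+x)$, $r_1=x$, $r_2=x(1+y)$, so that $rad(\langle r_1\rangle)+rad(\langle r'\rangle)=\langle x,y^2+x\rangle=\langle x,y^2\rangle$ does not contain $y=r_2/r_1-1$, while $y\in rad(\langle x,y^2\rangle)$, so the sections are $1$-ghost homotopic but not chain homotopic. Your sketch does not supply this criterion or any explicit data, and you yourself flag the non-existence verification as the main obstacle; as it stands this is a genuine gap. Moreover your geometric heuristic is off: in the actual example the two lifts hit the \emph{same} point of the exceptional fiber over the closed point (both exceptional directions are $[1:0]$), so the obstruction is not about ``different directions'' in the exceptional $\pone$ but about the failure of the unit $r_2/r_1=1+y$ to satisfy the sum-of-radicals condition, i.e.\ the gap between $rad(\langle r_1\rangle)+rad(\langle r'\rangle)$ and $rad(\langle r_1,r'\rangle)$.
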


\begin{proof}[Proof of (1) :]
We need to show that the canonical epimorphism $\mathcal{S}(X)\to B$ is a monomorphism of Nisnevich sheaves. 
Hence it is enough to show this for every stalk in the Nisnevich topology. The stalks in the Nisnevich topology are given by  $U$, the spectrum of an essentially smooth Henselian local $k$-algebra. Now we show that the canonical map $$\mathcal{S}(X)(U)\to B(U)$$ in injective.
We have the commutative diagram 
$$\xymatrix{
X(U)\ar@{->>}[rd] \ar@{->>}[d]& \\
\mathcal{S}(X)(U)\ar[r] & B(U)
 }
$$
 Let $\alpha_1$, $\alpha_2\in X(U)$ be such that these map to the same element in $B(U)$. 
 Then by Remark~\ref{rmk0}, the sections $\alpha_1, \alpha_2:U\to X$ factor through the pullback $X\times_ {\gamma, B} U\simeq\pone_U\to X$. Hence, the induced sections $\alpha_1, \alpha_2: U\to\pone_U$ are $\A^1$-chain homotopic \ie $\alpha_1=\alpha_2$ in $\mathcal{S}(X)(U)$. So we have proved that the epimorphism $\mathcal{S}(X)\to B$ is a monomorphism, hence an isomorphism as claimed. 
\end{proof}

\begin{proof}[Proof of (2) :] We need to show that the canonical epimorphism $\mathcal{S}^2(X)\to \mathcal{S}^3(X)$ is a monomorphism of Nisnevich sheaves. 
Hence it is enough to show for $U$, the spectrum of an essentially smooth Henselian local $k$-algebra, that the canonical map $$\mathcal{S}^2(\~X)(U)\to \mathcal{S}^3(\~X)(U)$$ is injective.

Let $\alpha_1, \alpha_2\in \widetilde{X}(U)$, which are connected by a $2$-ghost homotopy $h_{\mathcal{H}}:Sp({\mathcal{H}})\to \widetilde{X}$. 
Since $B$ is $\A^1$-invariant, for the $2$-ghost homotopy $h_{\mathcal{H}}:Sp({\mathcal{H}})\to \~X\to X$ over $U$, the homotopy $\pi\circ h_{\mathcal{H}}: Sp({\mathcal{H}})\to X\to  B$ is constant (see\cite[Lemma 2.12]{bs19}). Thus, if two morphisms $\alpha_1, \alpha_2: U\to \~X$ are $2$-ghost homotopic, then the compositions
 $\pi\circ\alpha_1$ and $\pi\circ\alpha_2$ are equal, say $\gamma: U\to B$.  Thus, the $2$-ghost homotopy connecting the sections $\alpha_1, \alpha_2\in \widetilde{X}(U)$ lies over the morphism $ \gamma: U\to B$. Now by applying Proposition~\ref{n-ghostlyingovergamma}, we get that the sections  $\alpha_1, \alpha_2$ are 1-ghost homotopic \ie $\alpha_1=\alpha_2$ in $\mathcal{S}^2(\~X)(U)$.

 This proves that $\mathcal{S}^2(\widetilde{X})(U)\simeq \mathcal{S}^3(\widetilde{X})(U)$, for all Henselian local $k$-algebras. Hence, the morphism $\mathcal{S}^2(\widetilde{X}) \to \mathcal{S}^3(\widetilde{X})$ of Nisnevich sheaves is an isomorphism. 
\end{proof}

To see that $\mathcal{S}(\widetilde{X})\neq \mathcal{S}^2(\widetilde{X})$, we use a criterion for two sections of $\widetilde{X}$ to be $\A^1$-chain homotopic.
Keeping the notations from the \textit{proof of the claim 2} above, we recall the following proposition.
\begin{proposition} (\cite[Proposition 4.10]{bs20})
\label{aonechain}
Let $r_0\in \mathfrak{m}-\{0\}.$ Let $r_1, r_2\in \mathfrak{m}-\{0\} $ be such that $r_1\mid r_0$,  $r_2\mid r_0$ but $r_0\nmid r_1$ and $r_0\nmid r_2$. Let $r'=r_0/r_1.$ 
There exists an $\A^1$-chain homotopy connecting $\alpha_{1}$ to $\alpha_{2}$ which lifts to $X_{\gamma}$ if and only if
\begin{enumerate}

\item $r_2$ is a unit multiple of $r_1$.
\item $r_2/r_1-1\in rad(<r_1>) + rad(<r'>)\subseteq rad(<r_1, r'>).$
\end{enumerate}
\end{proposition}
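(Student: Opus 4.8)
The plan is to reduce the biconditional to an explicit computation with sections of a blow-up of $\pone_U$, and then to analyse $\A^1$-homotopies chart by chart. First I would set things up locally. Since $U=\Spec R$ with $(R,\mathfrak m)$ Henselian local and $X_\gamma:=X\times_{\gamma,B}U$ is a $\pone$-fibration over $U$ admitting a section (for instance $\alpha_1$), it is trivial: $X_\gamma\cong\pone_U=\Proj R[x_0,x_1]$. A chain homotopy lifting to $X_\gamma$ is one factoring through $\widetilde{X}_\gamma:=Bl_{Z_\gamma}\pone_U$ with $Z_\gamma:=Z\times_BU$, and after a linear change of the fibre coordinate $t=x_1/x_0$ we may assume $Z_\gamma=V(t,r_0)$, where $r_0\in\mathfrak m\setminus\{0\}$ generates $\gamma^{*}\mathcal I_W$; then $\alpha_i$ becomes the section $t=r_i$. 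The hypotheses $r_i\mid r_0$ and $r_0\nmid r_i$ say precisely that $\alpha_i$ lifts, uniquely by the universal property of the blow-up, to $\widetilde{X}_\gamma$, landing in the affine chart $A=\Spec R[t,v]/(tv-r_0)$ at the point $(t,v)=(r_i,\,r_0/r_i)$. Since $R$ is regular local it is a UFD, so $R[s]$ is a UFD and every invertible ideal of $R[s]$ is principal; this is used throughout.

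For \emph{sufficiency} I would first record two families of one-step $\A^1$-homotopies lifting to $X_\gamma$ and emanating from the section $t=r_1$. (a) For $\epsilon\in\operatorname{rad}(\langle r'\rangle)$ the $\pone$-section $t=r_1(1+\epsilon s)$ over $\A^1_U$ pulls $\mathcal I_{Z_\gamma}$ back to $r_1\cdot(1+\epsilon s,\,r')=(r_1)$ — because $\epsilon$ is nilpotent modulo $r'$, so $1+\epsilon s$ is a unit in $R[s]/(r')$ — hence to an invertible ideal, so it lifts to $\widetilde{X}_\gamma$ and connects $\alpha_1$ to the section $t=(1+\epsilon)r_1$. (b) For $\delta\in\operatorname{rad}(\langle r_1\rangle)$ the $\pone$-section $[x_0:x_1]=[1+\delta s:r_1]$ (that is, $t=r_1/(1+\delta s)$) pulls $\mathcal I_{Z_\gamma}$ back to $(r_1,r_0)=(r_1)$, an effective Cartier divisor, so it lifts to $\widetilde{X}_\gamma$ and connects $\alpha_1$ to $t=(1+\delta)^{-1}r_1$. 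As $\epsilon$, respectively $\delta$, varies, (a), respectively (b), connects $\alpha_1$ to the section $t=wr_1$ for every $w\in1+\operatorname{rad}(\langle r'\rangle)$, respectively every $w\in1+\operatorname{rad}(\langle r_1\rangle)$, and the same holds with $r_1$ replaced by any unit multiple. Now given $r_2=ur_1$ with $u-1=a+b$, $a\in\operatorname{rad}(\langle r_1\rangle)$, $b\in\operatorname{rad}(\langle r'\rangle)$, set $u_1=1+a\in1+\operatorname{rad}(\langle r_1\rangle)$ and $u_2=uu_1^{-1}=1+b(1+a)^{-1}\in1+\operatorname{rad}(\langle r'\rangle)$; then a type (b) homotopy from $\alpha_1$ to $t=u_1r_1$, followed by a rescaled type (a) homotopy from $t=u_1r_1$ to $t=u_2u_1r_1=r_2$, is the required chain homotopy.

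For \emph{necessity}, condition (1) is immediate: $\A^1$-chain homotopic sections of $\widetilde{X}$ pull $\mathcal I_Z$ back to the same ideal of $R$ (the ghost-homotopy analysis of blow-ups, cf.\ \cite{bs19}), so $(r_1)=\alpha_1^{*}\mathcal I_Z=\alpha_2^{*}\mathcal I_Z=(r_2)$, i.e.\ $r_2=ur_1$ with $u\in R^{\times}$. For condition (2) I would handle a single $\A^1$-homotopy first: such a homotopy from $\alpha_1$ to $\alpha_2$ lifting to $X_\gamma$ is the unique lift of a $\pone$-section $\bar h=[g(s):f(s)]$, $(f,g)=R[s]$, with $\bar h^{*}\mathcal I_{Z_\gamma}=(f(s),r_0)$ invertible, hence principal. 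Comparing with the restrictions at $s=0$ and $s=1$ forces $f(s)=r_1f_1(s)$ with $(f_1,r')=R[s]$ and $f_1(0)=1$, forces $g$ to be a unit modulo $r_1$ with $g(0)=1$, and gives the relation $f_1(1)=u\,g(1)$. Using that a polynomial over a ring is a unit iff its constant term is a unit and its higher coefficients are nilpotent, $f_1(0)=1$ together with $f_1$ a unit modulo $r'$ gives $f_1(1)\in1+\operatorname{rad}(\langle r'\rangle)$, and likewise $g(1)\in1+\operatorname{rad}(\langle r_1\rangle)$; therefore $u=f_1(1)g(1)^{-1}\in1+\operatorname{rad}(\langle r_1\rangle)+\operatorname{rad}(\langle r'\rangle)$. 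For a chain $\alpha_1=\beta_0,\beta_1,\dots,\beta_m=\alpha_2$, each $\beta_j$ again has $\beta_j^{*}\mathcal I_Z=(r_1)$, so $\beta_j$ is the lift of a section $t=u_jr_1$ with $u_j\in R^{\times}$, $u_0=1$, $u_m=u$; applying the single-homotopy statement to each $\beta_j\to\beta_{j+1}$ (with $r_1$ replaced by the unit multiple $u_jr_1$, which leaves the two radical ideals unchanged) gives $u_{j+1}u_j^{-1}\in1+J$, where $J:=\operatorname{rad}(\langle r_1\rangle)+\operatorname{rad}(\langle r'\rangle)$; since $1+J$ is multiplicatively closed, $u=\prod_j u_{j+1}u_j^{-1}\in1+J$, which is condition (2).

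The main obstacle, and where essentially all the work lies, is the single-homotopy analysis in the necessity direction. The reason chains are genuinely needed — and the reason two radical ideals appear — is that no non-constant homotopy can remain inside the chart $A$: a section of $\Spec R[s][t,v]/(tv-r_0)$ is $(t,v)=(p(s),q(s))$ with $p(s)q(s)=r_0$, and since $R[s]$ is a domain this forces $p$ and $q$ to be constant, hence the homotopy constant. So every homotopy must pass through the exceptional divisor, i.e.\ through the remaining charts of $\widetilde{X}_\gamma$ where $\pone_U$ is genuinely modified, and the content of the proposition is precisely the bookkeeping of the two independent directions in which one can move off $A$ and back — one contributing $\operatorname{rad}(\langle r_1\rangle)$ and the other $\operatorname{rad}(\langle r'\rangle)$. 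Pinning down exactly which $\pone$-sections $[g:f]$ occur, and extracting the role of nilpotents in $R/(r_1)$ and $R/(r')$, is the heart of the matter; the passage from a single homotopy to chains, and hence the full biconditional, is then formal.
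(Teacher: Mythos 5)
The paper itself offers no proof of this proposition: it is imported verbatim from \cite[Proposition 4.10]{bs20} and used as a black box in the proof of Theorem~\ref{a1rigidsurface}(3), so the only in-paper relatives are Lemma~\ref{lemmaA} and Lemma~\ref{lemmaB}, which work with $n$-ghost homotopies and only see the coarser condition $r_2/r_1-1\in \mathrm{rad}(\langle r_1,r'\rangle)$ (and, conversely, only produce a $1$-ghost homotopy). Your argument is, as far as I can check, correct, and it is essentially the argument of the cited source rather than anything in this paper: you write a naive homotopy as a unimodular pair $[g(s):f(s)]$ over $R[s]$ (legitimate because $R$ is regular local, so $R[s]$ is a UFD with trivial Picard group), characterize liftability to the blow-up by invertibility, hence principality, of $(f(s),r_0)$, and then translate ``$f_1$ (resp.\ $g$) is a unit modulo $r'$ (resp.\ $r_1$) with value $1$ at $s=0$'' into $f_1(1)\in 1+\mathrm{rad}(\langle r'\rangle)$ and $g(1)\in 1+\mathrm{rad}(\langle r_1\rangle)$ via the nilpotent-coefficients criterion for unit polynomials. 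This is exactly the mechanism that isolates the \emph{sum} of the two radicals, i.e.\ the genuinely naive-homotopy statement that the ghost-homotopy bookkeeping of Lemma~\ref{lemmaB} cannot detect; your explicit two-step chain in the sufficiency direction is also the expected one and each step visibly pulls $\mathcal{I}_{Z_\gamma}$ back to $(r_1)$.

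Two asserted steps deserve an explicit line each, though neither is a real obstruction. First, ``a homotopy lifts to $X_\gamma$ iff $(f(s),r_0)$ is invertible'': the ``only if'' direction is not the bare universal property; you should note that when a lift exists the inverse image ideal is locally principal (pullback of the invertible exceptional ideal) and nonzero (it restricts to $(r_1)\neq 0$ at $s=0$), hence invertible since $R[s]$ is a domain. Second, in the chain step you assert that an intermediate section $\beta_j$ with $\beta_j^{*}\mathcal{I}=(r_1)$ is $t=u_jr_1$ with $u_j\in R^{\times}$: since $U$ is local, $\beta_j$ factors through one of the two standard charts, and it must be the chart containing the center (otherwise the pullback ideal would be the unit ideal), say $t=\rho_j$; then $(\rho_j,r_0)=(r_1)$ gives $\rho_j=e_jr_1$ with $\langle e_j,r'\rangle=R$, and $e_j$ is a unit because $R$ is local and $r'\in\mathfrak{m}$ — this is where the hypothesis $r_0\nmid r_1$ enters and it should be said. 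With these two remarks supplied, your proof is complete and matches the finer statement being quoted.
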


%
%
%
%

\begin{proof}[Proof of claim 3:]
We show that $\mathcal{S}(\widetilde{X})(U)\neq \mathcal{S}^2(\widetilde{X})(U)$ for some Henselian local $k$-algebra $U$. \\
Let $U= \Spec R$, where $R=k[x, y]^h_{(x, y)}$, where $k[x, y]$ is the ring of polynomials in the indeterminates $x, y$ and $(x, y)$ is the maximal ideal generated by $x$ and $y$.
Let $r_0= x(y^2+x), r_1=x$ and $r_2=x(1+y)$. \\
We observe that
$$ rad(<r_1>) + rad(<r'>)= <x, y^2>$$ and $$r_2/r_1-1=y\notin rad(<r_1>) + rad(<r'>).$$
It follows from the Proposition \ref{aonechain} above that, the morphisms $\alpha_{1}, \alpha_{2}$ corresponding to $r_1, r_2$ respectively, are not $\A^1$-chain homotopic. Furthermore, the morphisms $\alpha_{1}, \alpha_{2}$ are 1-ghost homotopic, since $r_2/r_1-1=y\in rad(<x, y^2>)$ as seen earlier in the \textit{proof of the claim 2}.
\end{proof}

\begin{corollary}
\label{cor1}

Let $\widetilde{X}$ be as in the Theorem~\ref{a1rigidsurface}.
 Then the sheaf $\pi_0^{\A^1}(\widetilde{X})$ is $\A^1$-invariant.

\end{corollary}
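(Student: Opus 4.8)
The plan is to deduce Corollary~\ref{cor1} from the theorem together with the machinery recalled in Section~\ref{S2}. The crucial input is the criterion from \cite[Theorem 2.13, Remark 2.14, Corollary 2.18]{bhs} already quoted in the excerpt: the canonical map $\pi_0^{\A^1}(\widetilde{X})\to\mathcal{L}(\widetilde{X})=\colim_{n\geq 0}\mathcal{S}^n(\widetilde{X})$ is an isomorphism if and only if $\pi_0^{\A^1}(\widetilde{X})$ is $\A^1$-invariant. So it suffices to show that this colimit is computed by a finite stage, and more precisely that the tower $\{\mathcal{S}^n(\widetilde{X})\}_{n\geq 0}$ stabilizes; then $\mathcal{L}(\widetilde{X})$ agrees with that stable stage, and it remains to check that the stable stage is indeed isomorphic (via the canonical map) to $\pi_0^{\A^1}(\widetilde{X})$, which follows once we know $\mathcal{S}^N(\widetilde{X})$ is already $\A^1$-invariant.

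First I would invoke part (2) of Theorem~\ref{a1rigidsurface}, i.e.\ the isomorphism $\mathcal{S}^2(\widetilde{X})\isomto\mathcal{S}^3(\widetilde{X})$. Next, I would observe that once one stage of the tower of epimorphisms $\mathcal{S}^n(\widetilde{X})\to\mathcal{S}^{n+1}(\widetilde{X})$ is an isomorphism, all subsequent stages are: applying the functor $\mathcal{S}(-)$ to the isomorphism $\mathcal{S}^2(\widetilde{X})\simeq\mathcal{S}^3(\widetilde{X})$ gives $\mathcal{S}^3(\widetilde{X})\simeq\mathcal{S}^4(\widetilde{X})$, and inductively $\mathcal{S}^n(\widetilde{X})\simeq\mathcal{S}^{n+1}(\widetilde{X})$ for all $n\geq 2$. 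Hence the colimit defining $\mathcal{L}(\widetilde{X})$ stabilizes at $n=2$, so $\mathcal{L}(\widetilde{X})\simeq\mathcal{S}^2(\widetilde{X})$, and in particular $\mathcal{S}^2(\widetilde{X})$ is $\A^1$-invariant (being a colimit that agrees with all its terms from stage $2$ on, or more simply: a sheaf $\mathcal{F}$ with $\mathcal{S}(\mathcal{F})\simeq\mathcal{F}$ is $\A^1$-invariant since $\mathcal{S}(\mathcal{F})$ is $\A^1$-invariant when it equals $\mathcal{F}$... this needs the fact that $\mathcal{L}$ of an $\A^1$-invariant sheaf is itself).

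Then I would unwind the colimit: since every transition map $\mathcal{S}^n(\widetilde{X})\to\mathcal{S}^{n+1}(\widetilde{X})$ for $n\geq 2$ is an isomorphism, the canonical map $\mathcal{S}^2(\widetilde{X})\to\mathcal{L}(\widetilde{X})=\colim_n \mathcal{S}^n(\widetilde{X})$ is an isomorphism. Composing with the canonical map $\pi_0^{\A^1}(\widetilde{X})\to\mathcal{L}(\widetilde{X})$ and the tower $\widetilde{X}\to\mathcal{S}(\widetilde{X})\to\mathcal{S}^2(\widetilde{X})\to\cdots$, and using the cited result that the canonical map $\pi_0^{\A^1}(\widetilde{X})\to\mathcal{L}(\widetilde{X})$ is an isomorphism precisely when $\pi_0^{\A^1}(\widetilde{X})$ is $\A^1$-invariant, I conclude that $\pi_0^{\A^1}(\widetilde{X})$ is $\A^1$-invariant. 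The only point requiring a little care is the biconditional direction I am using: here I need the ``if'' direction, namely that stabilization of the tower forces the canonical comparison map to be an isomorphism; this is exactly what \cite[Corollary 2.18]{bhs} provides (the universal $\A^1$-invariant quotient is $\A^1$-invariant by construction, and stabilization means it is reached at a finite stage, so $\mathcal{L}(\widetilde{X})$ itself is $\A^1$-invariant, which by the quoted equivalence is the statement that $\pi_0^{\A^1}(\widetilde{X})$ is $\A^1$-invariant).

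The main obstacle is essentially bookkeeping rather than mathematical depth: one must be precise about which direction of the $\pi_0^{\A^1}\leftrightarrow\mathcal{L}$ equivalence is being invoked and verify that applying $\mathcal{S}(-)$ to an isomorphism of sheaves yields an isomorphism (immediate, since $\mathcal{S}$ is a functor), so that a single stabilization $\mathcal{S}^2\simeq\mathcal{S}^3$ propagates to the whole tower and hence to the colimit $\mathcal{L}$. Given Theorem~\ref{a1rigidsurface}(2), everything else is formal.

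\begin{proof}[Proof of Corollary~\ref{cor1}]
By Theorem~\ref{a1rigidsurface}(2), the epimorphism $\mathcal{S}^2(\widetilde{X})\to\mathcal{S}^3(\widetilde{X})$ is an isomorphism. Applying the functor $\mathcal{S}(-)$ repeatedly, we obtain that $\mathcal{S}^n(\widetilde{X})\to\mathcal{S}^{n+1}(\widetilde{X})$ is an isomorphism for every $n\geq 2$. Hence the tower $\{\mathcal{S}^n(\widetilde{X})\}_{n\geq 0}$ stabilizes, and the canonical map $\mathcal{S}^2(\widetilde{X})\to\mathcal{L}(\widetilde{X})=\colim_{n\geq 0}\mathcal{S}^n(\widetilde{X})$ is an isomorphism. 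In particular $\mathcal{L}(\widetilde{X})$ is $\A^1$-invariant. By \cite[Theorem 2.13, Remark 2.14, Corollary 2.18]{bhs}, the canonical map $\pi_0^{\A^1}(\widetilde{X})\to\mathcal{L}(\widetilde{X})$ is an isomorphism if and only if $\pi_0^{\A^1}(\widetilde{X})$ is $\A^1$-invariant; since $\mathcal{L}(\widetilde{X})$ is the universal $\A^1$-invariant quotient and the tower stabilizes at a finite stage, this comparison map is an isomorphism, and therefore $\pi_0^{\A^1}(\widetilde{X})$ is $\A^1$-invariant.
\end{proof}
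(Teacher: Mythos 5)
Your reduction of the colimit is fine: since the transition maps are epimorphisms, naturality of the unit $\mathcal{F}\to\mathcal{S}(\mathcal{F})$ shows that the isomorphism $\mathcal{S}^2(\widetilde{X})\simeq\mathcal{S}^3(\widetilde{X})$ propagates up the tower, so $\mathcal{L}(\widetilde{X})\simeq\mathcal{S}^2(\widetilde{X})$; the paper uses the same identification. The gap is in your final step. The criterion you quote says that $\pi_0^{\A^1}(\widetilde{X})$ is $\A^1$-invariant \emph{if and only if} the canonical map $\pi_0^{\A^1}(\widetilde{X})\to\mathcal{L}(\widetilde{X})$ is an isomorphism, so you must first prove that this comparison map is an isomorphism. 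Your justification --- that $\mathcal{L}(\widetilde{X})$ is the universal $\A^1$-invariant quotient and the tower stabilizes at a finite stage --- is a non sequitur: $\mathcal{L}(\widetilde{X})$ is $\A^1$-invariant whether or not the tower stabilizes, and stabilization says nothing about injectivity of $\pi_0^{\A^1}(\widetilde{X})\to\mathcal{L}(\widetilde{X})$. Concretely, what is needed is that two sections of $\widetilde{X}$ over a henselian local $U$ which become equal in $\mathcal{S}^2(\widetilde{X})(U)$ already have the same image in $\pi_0^{\A^1}(\widetilde{X})(U)$. Such an equality is witnessed only by ($1$-)ghost homotopies, i.e.\ by sections of the sheafification $\mathcal{S}(\widetilde{X})$ over $\A^1_U$ that need not lift to honest $\A^1$-homotopies in $\widetilde{X}$; that these identifications die in $\pi_0^{\A^1}$ is not a formal consequence of anything you cite (if it were formal at every stage, stabilization of the tower would immediately yield Morel's conjecture, and the clauses ``and map to the same element in $\pi_0^{\A^1}(\widetilde{X})(U)$'' in Proposition~\ref{n-ghostlyingovergamma} and Lemmas~\ref{lemma1}--\ref{lemma4} would be superfluous).

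This missing ingredient is exactly what the paper's proof supplies. The lemmas behind Theorem~\ref{a1rigidsurface} produce \emph{explicit} $1$-ghost homotopies between the relevant sections, and \cite[Lemma 4.1]{bhs} guarantees that sections connected by such a $1$-ghost homotopy have the same image in $\pi_0^{\A^1}(\widetilde{X})(U)$. This yields a well-defined morphism $\mathcal{S}^2(\widetilde{X})\simeq\mathcal{L}(\widetilde{X})\to\pi_0^{\A^1}(\widetilde{X})$ which is a retract of the canonical map $\pi_0^{\A^1}(\widetilde{X})\to\mathcal{L}(\widetilde{X})$, and the corollary then follows from \cite[Lemma 2.16]{bhs}. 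To repair your argument you must add this step (or otherwise prove injectivity of $\pi_0^{\A^1}(\widetilde{X})\to\mathcal{S}^2(\widetilde{X})$ on henselian local stalks); without it, the appeal to the if-and-only-if criterion is circular.
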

\begin{proof}
It follows from Theorem~\ref{a1rigidsurface} and \cite[Lemma 4.1]{bhs} that the canonical morphism 
$$\pi_0^{\A^1}(\widetilde{X})\to \mathcal{S}^2(\widetilde{X})\simeq   \mathcal{L}(\widetilde{X}) $$
admits a retract. Then the corollary follows from~\cite[Lemma 2.16]{bhs}.
\end{proof}

 \subsection{Threefolds admitting $\pone$-fibration over non-uniruled surface}
%

Keeping the notation from Section~\ref{S3} (\#), we will prove the following

\begin{theorem}
\label{nonunisurface}

 Let $X$ and $\widetilde{X}$ be as in (\#). Further, assume that $B$ is non-uniruled. 
 Then 
\begin{enumerate}
\item $\mathcal{S}(X)\simeq \mathcal{S}^2(X)$.
\item $\mathcal{S}^2(\widetilde{X})\simeq\mathcal{S}^3(\widetilde{X})$.
\end{enumerate}

\end{theorem}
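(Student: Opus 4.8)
\textbf{Proof proposal for Theorem~\ref{nonunisurface}.}

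The plan is to mirror the structure of the proof of Theorem~\ref{a1rigidsurface}, replacing the $\A^1$-rigidity input by the non-uniruled hypothesis, which by the main result of \cite{bhs} (Morel's conjecture for non-uniruled surfaces) guarantees that $\pi_0^{\A^1}(B)$ is $\A^1$-invariant and, more concretely, that $\mathcal{S}(B)\simeq\mathcal{S}^2(B)$ and that $\mathcal{S}(B)$ is already the universal $\A^1$-invariant quotient $\mathcal{L}(B)$. For part (1), I would check that the epimorphism $\mathcal{S}(X)\to\mathcal{S}^2(X)$ is a monomorphism on Nisnevich stalks $U=\Spec R$ with $R$ an essentially smooth Henselian local $k$-algebra. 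Given $\alpha_1,\alpha_2\in\mathcal{S}(X)(U)$ connected by a $1$-ghost homotopy, one pushes the ghost homotopy down along $\pi:X\to B$. Since $\pi$ is a $\pone$-fibration, hence (Remark~\ref{rmk0}) \'etale-locally a trivial $\pone$-bundle, and since $\pone$ is $\A^1$-connected with trivial higher homotopy sheaves, the composite ghost homotopy $\pi\circ h_{\mathcal H}$ descends to a $1$-ghost homotopy in $\mathcal{S}(B)$ between $\pi\circ\alpha_1$ and $\pi\circ\alpha_2$; because $\mathcal{S}(B)\simeq\mathcal{S}^2(B)$, these two sections already agree in $\mathcal{S}(B)(U)$, i.e. there is a genuine $\A^1$-chain homotopy downstairs equal to $\gamma:U\to B$ on both ends. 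After replacing the ghost homotopy by a chain, one reduces to the case where the (chain of) ghost homotopies lies over a single morphism $\gamma:U\to B$, and then fibrewise triviality of $\pone_U$ forces $\alpha_1=\alpha_2$ in $\mathcal{S}(X)(U)$, proving (1).

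For part (2), the strategy is again to reduce to Proposition~\ref{n-ghostlyingovergamma}. Take $\alpha_1,\alpha_2\in\widetilde X(U)$ connected by a $2$-ghost homotopy $h_{\mathcal H}:Sp(\mathcal H)\to\widetilde X$. Composing with $\widetilde X\to X\xrightarrow{\pi}B$ gives a $2$-ghost homotopy in $X$ and hence, pushing forward, a chain of $2$-ghost homotopies in $\mathcal{S}(B)$ between $\pi\circ\alpha_1$ and $\pi\circ\alpha_2$. Since $B$ is non-uniruled, $\mathcal{S}(B)\simeq\mathcal{S}^2(B)\simeq\mathcal{S}^3(B)$, so the $2$-ghost homotopy data in $\mathcal{S}(B)$ collapses: $\pi\circ\alpha_1$ and $\pi\circ\alpha_2$ are equal in $\mathcal{S}(B)(U)$, say both equal to $\gamma:U\to B$. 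I then need to upgrade this equality of images in $\mathcal{S}(B)$ to the statement that the original $2$-ghost homotopy upstairs \emph{lies over} a fixed morphism $\gamma:U\to B$ in the sense of Definition~\ref{liesovergamma}; this is the analogue of the appeal to \cite[Lemma 2.12]{bs19} in the $\A^1$-rigid case, but now using $\A^1$-invariance of $\pi_0^{\A^1}(B)$ rather than literal $\A^1$-rigidity, so one invokes the version of that lemma valid when the base merely has $\A^1$-invariant $\pi_0^{\A^1}$ (equivalently, when $\mathcal{S}^m(B)$ has stabilised). Once the $2$-ghost homotopy is known to lie over $\gamma$, Proposition~\ref{n-ghostlyingovergamma} applies directly and yields that $\alpha_1$ and $\alpha_2$ are $1$-ghost homotopic, i.e. $\alpha_1=\alpha_2$ in $\mathcal{S}^2(\widetilde X)(U)$. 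Running over all such $U$ gives $\mathcal{S}^2(\widetilde X)\simeq\mathcal{S}^3(\widetilde X)$.

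The main obstacle is the descent step: pushing a ghost homotopy (which is, recall, a tower of Nisnevich covers with chains of lower ghost homotopies on the overlaps, not a literal map $\A^1_U\to X$) forward along $\pi$ and along the blow-up map $\widetilde X\to X$, and then extracting from the non-uniruledness of $B$ the conclusion that the pushed-forward ghost homotopy is \emph{constant} on the level of $\mathcal{S}(B)$, uniformly enough that one can say the upstairs homotopy lies over a single $\gamma$. Concretely this requires knowing that for a non-uniruled $B$ an $n$-ghost homotopy in $B$ between two sections forces those sections to coincide (the content that $\mathcal{S}(B)$ has already stabilised and equals $\pi_0^{\A^1}(B)$), together with a compatibility check that the $\gamma$ produced is the same on all components of $Sp(\mathcal H)$ — this is exactly where one re-uses, in the non-uniruled setting, the argument of \cite[Lemma 2.12]{bs19}. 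Everything after that — the application of Proposition~\ref{n-ghostlyingovergamma} and the stalkwise-to-sheaf passage — is formal. Note also that, unlike in Theorem~\ref{a1rigidsurface}(3), we do not expect (nor claim) that $\mathcal{S}(\widetilde X)\to\mathcal{S}^2(\widetilde X)$ fails to be a monomorphism here in general; the theorem only asserts stabilisation at stage $2$.
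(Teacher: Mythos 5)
Your proposal has a genuine gap at its central step. You want to argue that, because $B$ is non-uniruled, the image of the $2$-ghost homotopy in $B$ is ``constant enough'' that the whole homotopy upstairs lies over a single morphism $\gamma:U\to B$, by invoking an analogue of \cite[Lemma 2.12]{bs19} ``valid when the base merely has $\A^1$-invariant $\pi_0^{\A^1}$''. No such analogue exists: that lemma uses literal $\A^1$-rigidity of the target, i.e.\ that every morphism $\A^1_V\to B$ is constant. A non-uniruled surface need not be $\A^1$-rigid --- a K3 surface is non-uniruled but contains rational curves, so it admits non-constant maps from $\A^1_V$ and non-constant $\A^1$-chain homotopies. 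Consequently, knowing that $\pi\circ\alpha_1$ and $\pi\circ\alpha_2$ become equal in $\mathcal{S}(B)(U)$ (or that $\mathcal{S}^m(B)$ has stabilised, or that $\pi_0^{\A^1}(B)$ is $\A^1$-invariant) does not give you equality $\pi\circ\alpha_1=\pi\circ\alpha_2$ as morphisms $U\to B$, nor that the $2$-ghost homotopy lies over one $\gamma$; the homotopies downstairs may move non-trivially along rational curves on $B$, and then Proposition~\ref{n-ghostlyingovergamma} simply does not apply. Your sentence ``for a non-uniruled $B$ an $n$-ghost homotopy in $B$ between two sections forces those sections to coincide'' is false for exactly this reason, and the same issue already undermines your sketch of part (1).

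What non-uniruledness actually gives, via \cite[Chapter VI, Prop 1.3]{kollar}, is a dichotomy for each homotopy appearing in the tower: it is either fibrewise constant or non-dominant. The paper's proof exploits precisely this: if all the homotopies in the unpacked $2$-ghost datum are constant, they glue and the whole homotopy lies over some $\gamma:U\to B$, and Proposition~\ref{n-ghostlyingovergamma} finishes; otherwise the entire $2$-ghost homotopy factors through a $1$-dimensional closed subscheme $C\subset B$, hence (after normalising) through the blow-up of the $\pone$-fibration $X\times_B\overline{C_j}\to\overline{C_j}$ along $Z'$. That second branch is then handled component by component: when $\overline{C_j}\simeq\pone$ one uses $\mathcal{S}^2(Bl_{Z'}(X\times_B\overline{C_j}))=*$ from \cite[Corollary 3.3]{sawant17}, and when $\overline{C_j}$ has positive genus it is $\A^1$-rigid, so the homotopy lies over a morphism $U\to\overline{C_j}$ and the argument of Proposition~\ref{n-ghostlyingovergamma} applies. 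Your proposal contains no mechanism for this rational-curve branch, which is the essential new difficulty of the non-uniruled case compared with Theorem~\ref{a1rigidsurface}, so as written the proof does not go through. (Your final remark that the theorem does not claim $\mathcal{S}(\widetilde X)\to\mathcal{S}^2(\widetilde X)$ fails to be a monomorphism is correct and consistent with Remark~\ref{notuninotmono}.)
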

\begin{proof}[Proof of (1):]
 The proof of this follows along the lines of the proof of ~\cite[Theorem 3.14]{bhs}.
\end{proof}

\begin{proof}[Proof of (2):]
Since $B$ is not uniruled, we recall from ~\cite[Chapter VI, Prop 1.3]{kollar} that, for every $k$-variety $T$ and a rational map $\pone\times T\dashrightarrow  B$ either 

\begin{enumerate}
 
\item the rational map $\pone\times T\dashrightarrow B$ is not dominant or 
\item for every $t\in T$, the induced map $\pone_{k(t)}\dashrightarrow B$ is constant.
\end{enumerate}
We can take $T$ above to be an essentially smooth $k$-scheme.

Let $U=\Spec R $ be a smooth Henselian local $k$-algebra as in the Notation~\ref{notation}(1).
We have the commutative diagram 
$$\xymatrix{
\~X(U)\ar@{->>}[rd] \ar@{->>}[d]& \\
\mathcal{S}^2(\~X)(U)\ar[r] & \mathcal{S}^3(\~X)(U)
 }
$$
  Let $\alpha_1$, $\alpha_2\in \~X(U)$ be sections of $\widetilde{X}$ over $U$, such that both map to the same element in $\mathcal{S}^3(\~X)(U)$. In other words, $\alpha_1, \alpha_2$ are connected by a $2$-ghost homotopy $h_{\mathcal{H}}:Sp({\mathcal{H}})\to \widetilde{X}$.
   Suppose the $2$-ghost homotopy is given by 
 $$  \mathcal{H}=(V\to\A^1_U, W\to V\times_{\A^1_U}V, \widetilde{\sigma}_0, \widetilde{\sigma}_1, h, \mathcal{H}^{W})$$
 with $\mathcal{H}^{W}=(\mathcal{H}^{1}_{1}, \cdots, \mathcal{H}^{1}_{r})$ a chain of $ 1$-ghost homotopies as in the Definition~\ref{ghostdef}. 
Suppose the $1$-ghost homotopy $\mathcal{H}^1_m$ is given by 
 $$  \mathcal{H}^1_m=(V_m^1\to\A^1_{W}, W_m^1\to V_m^1\times_{\A^1_{W^2}}V_m^1, \widetilde{\sigma}_0, \widetilde{\sigma}_1, h, \mathcal{H}^{W^1_m})$$ where
 $\mathcal{H}^{W^1_m}=(\calH^0_1, \calH^0_2, \dots, \calH^0_s)$ is a chain of 0-ghost homotopies, in other words, $\calH^0_i$ are $\A^1$-chain homotopies on $W^1_m$.   
 
 The space of the 2-ghost homotopy $\mathcal{H}$ is given by, $$Sp(\mathcal{H})=V\coprod \Bigg(\coprod_{m=1}^{r} V_m^{1}\Bigg)
 \coprod \Bigg(\coprod_{m=1}^{r} \A^1_{W_m^1}\Bigg).$$
We write $h\mid _{ \A^1_{W_m^1}}$ or $h \mid _{V_m^i}$ for the respective restrictions of the morphism $h_{\mathcal{H}}:Sp(\mathcal{H})\to \widetilde{X}$. 

We summarize the data in the following diagrams
$$\xymatrix{
                &                                                  &                & U \ar@<-.5ex>[d]^{~\sigma_1} \ar@<.5ex>[d]_{~\sigma_0} \ar@<-.5ex>[ld]^{~\~{\sigma_1}} \ar@<.5ex>[ld]_{~\~{\sigma_0}}\\
W \ar[r]\ar@<-.5ex>[rrd] \ar@<.5ex>[rrd] & V\times_{\A^1_U}V \ar@<-.5ex>[r]_{~pr_2} \ar@<.5ex>[r]^{~pr_1}& V \ar[r] \ar[d]^{h}& \A^1_U \\
                &                                                  &          \~X       & 
}$$
We have a chain of 1-ghost homotopies  $\mathcal{H}^{W}=(\mathcal{H}^{1}_{1}, \cdots, \mathcal{H}^{1}_{r})$ between the two arrows from $W\to \~X$ \ie for $1\leq m\leq r$,

$$\xymatrix{
                &                                                  &                & W \ar@<-.5ex>[d]^{~\sigma_1} \ar@<.5ex>[d]_{~\sigma_0} \ar@<-.5ex>[ld]^{~\~{\sigma_1}} \ar@<.5ex>[ld]_{~\~{\sigma_0}}\\
W^1_m \ar[r]\ar@<-.5ex>[rrd] \ar@<.5ex>[rrd] & V^1_m\times_{\A^1_U}V^1_m \ar@<-.5ex>[r]_{~pr_2} \ar@<.5ex>[r]^{~pr_1}& V^1_m \ar[r] \ar[d]^{h^1_m}& \A^1_{W} \\
                &                                                  &          \~X       & 
}$$
where we have a chain of $\A^1$-homotopies between the two arrows from $W^1_m\to \~X$.

We write $V=\coprod_{i\in I} V_{i}$, where $V_i$ are irreducible for each $i\in I.$ Write $$W=\coprod_{i, j\in I}\coprod_{k\in K_{ij}} W^{ij}_k$$
where each $W^{ij}_k$ is irreducible and $K_{ij}$ is the indexing set such that $ \coprod_{k\in K_{ij}} W^{ij}_k\to V_i\x_{\A^1_U}V_j$ is a Nisnevich cover. Similarly, we write $V^1_m=\coprod_{i\in I_m} V^1_{mi}$, where $V^1_{mi}$ are irreducible for each $i\in I_m$. Again write $$W^1_m=\coprod_{i, j\in I_m}\coprod_{k\in K_{mij}} W^{ij}_{mk}$$
where each $W^{ij}_{mk}$ is irreducible and $K_{mij}$ is the indexing set such that $ \coprod_{k\in K_{mij}} W^{ij}_{mk}\to V^1_{mi}\x_{\A^1_U}V^1_{mj}$ is a Nisnevich cover.
We consider the maps $h^1_m: V^1_m\to \~X\to X\to B$ and $h: V\to \~X\to X\to B$ and denote the restrictions of $h^1_m$, to the irreducible components $V_{mi}$ as $h^1_m|_{V^1_{mi}}$, the restrictions of $h$, to the irreducible components $V_{i}$ as $h|_{V_{i}}$, etc. 

Assume that for every $m, i, j, k$, $(h^1_m|_{V^1_{mi}})|_{W^{ij}_{mk}}=(h^1_m|_{V^1_{mj}})|_{W^{ij}_{mk}}$. Then  $(h^1_m|_{V^1_{mi}})|_{V^1_{mi}\x_{\A^1_U}V^1_{mj}}=(h^1_m|_{V^1_{mj}})|_{V^1_{mi}\x_{\A^1_U}V^1_{mj}}$. Hence the morphisms $(h^1_m|_{V^1_{mi}})$ glue to give a morphism $h:\A^1_W\to B$. Now further if $h:\A^1_W\to B$ is a constant homotopy,
then in particular, $h(0)=h(1)$, i.e for every $i, j, k$, $(h|_{V_{i}})|_{W^{ij}_{k}}=(h|_{V_{j}})|_{W^{ij}_{k}}$. Hence $(h|_{V_{i}})|_{V_i\x_{\A^1_U}V_j}=(h|_{V_{j}})|_{V_i\x_{\A^1_U}V_j}$. Thus, $h|_{V_{i}}$ glue to give a morphism $h: \A^1_U\to B$. Again if the homotopy $h: \A^1_U\to B$ is constant, then in particular $\alpha_1=h(0)=h(1)=\alpha_2:U\to B$ and the entire 2-ghost homotopy lies over a map $\gamma: U\to B$.  Now by the Proposition~\ref{n-ghostlyingovergamma}, the sections $\alpha_1$ and $\alpha_2$ are 1-ghost homotopic.

On the other hand, if either of the homotopies appearing above are non-constant, then the 2-ghost homotopy factors through a 1-dimensional closed subscheme $C\subseteq B$. 
Thus, the given 2-ghost homotopy $h_{\mathcal{H}}:Sp({\mathcal{H}})\to \widetilde{X}$ when composed with 
$\widetilde{X}\to X$ factors through the $\pone$-fibration over $C$. Write $C=\cup_{j\in J} C_j$, where each $C_j$ is an irreducible component of $C$. Since the morphisms appearing in the 2-ghost homotopy when restricted to irreducible components are dominant onto irreducible components of $C$, the 2-ghost homotopy   $h_{\mathcal{H}}:Sp({\mathcal{H}})\to \widetilde{X}\to X\to B$ factors through  the normalization $\overline{C}$ of $C$, where  $\overline{C}=\coprod_{j\in J}\overline{C_j} $ is the disjoint union of the normalization $\overline{C_j}$ of $C_j$. Thus, the 2-ghost homotopy $h_{\mathcal{H}}:Sp({\mathcal{H}})\to \widetilde{X}$ factors through the blow up of the $\pone$-fibration  $X\times_{B}\overline{C}\to \overline{C}$ along the inverse image $Z'$ of $Z$ under the morphism $X\times_{B}\overline{C} \to X$. Now, either the smooth curve $\overline{C_j}$ is isomorphic to $\pone$ or is of positive genus. In the case that  $\overline{C_j}\simeq \pone$,  the blow up is a rational surface. Hence, in this case, the result follows since $\mathcal{S}^2(Bl_{Z'}(X\times_{B}\overline{C_j}))=*$ as shown in \cite[Corollary 3.3]{sawant17}. In the other case, that $\overline{C_j}$ is a smooth projective curve of positive genus, we observe that the blow up of the $\pone$-fibration  $X\times_{B}\overline{C_j}\to \overline{C_j}$ along $Z'$ 
 is a birationally ruled surface over $\overline{C}$. Here $\overline{C_j}$ is $\A^1$-rigid, thus, given an $n$-ghost homotopy to $X\times_{B}\overline{C_j}$, after composing with $X\times_{B}\overline{C_j}\to \overline{C_j}$ lies over a morphism $\gamma:U\to \overline{C_j}$. By the similar argument, as employed in the Proposition~\ref{n-ghostlyingovergamma}, we deduce that the sections $\alpha_1$, $\alpha_2$ are 1-ghost homotopic and map to the same element in  $\pi_0^{\A^1}(\widetilde{X})(U)$.
\end{proof}

\begin{corollary} \label{cor2}
For $\widetilde{X}$ as in the Theorem~\ref{nonunisurface},
 $\pi_0^{\A^1}(\widetilde{X})$ is $\A^1$-invariant.
\end{corollary}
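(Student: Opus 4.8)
The plan is to deduce Corollary~\ref{cor2} from Theorem~\ref{nonunisurface}(2) in exactly the same way Corollary~\ref{cor1} was deduced from Theorem~\ref{a1rigidsurface}(2). The heart of the matter is the isomorphism $\mathcal{S}^2(\widetilde{X})\simeq\mathcal{S}^3(\widetilde{X})$; once this stabilization at level $2$ is in hand, the higher iterations $\mathcal{S}^n(\widetilde{X})$ are all isomorphic for $n\geq 2$, so the colimit $\mathcal{L}(\widetilde{X})=\colim_{n\geq 0}\mathcal{S}^n(\widetilde{X})$ is canonically identified with $\mathcal{S}^2(\widetilde{X})$.

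First I would record that, since $\mathcal{S}^n(\widetilde{X})\to\mathcal{S}^{n+1}(\widetilde{X})$ is an epimorphism for every $n$ and is an isomorphism for $n=2$ by Theorem~\ref{nonunisurface}(2), applying $\mathcal{S}$ repeatedly gives that $\mathcal{S}^n(\widetilde{X})\to\mathcal{S}^{n+1}(\widetilde{X})$ is an isomorphism for all $n\geq 2$; hence the canonical map $\mathcal{S}^2(\widetilde{X})\to\mathcal{L}(\widetilde{X})$ is an isomorphism. Next, by \cite[Lemma 4.1]{bhs}, the canonical morphism $\pi_0^{\A^1}(\widetilde{X})\to\mathcal{S}^2(\widetilde{X})$ admits a retract (this uses that $\mathcal{S}^2(\widetilde{X})$ is $\A^1$-invariant, which follows from the stabilization, together with the universal property of $\pi_0^{\A^1}$). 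Combining these, the canonical map $\pi_0^{\A^1}(\widetilde{X})\to\mathcal{L}(\widetilde{X})$ admits a retract.

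Finally I would invoke \cite[Lemma 2.16]{bhs}: the canonical map $\pi_0^{\A^1}(\mathcal{X})\to\mathcal{L}(\mathcal{X})$ is always an epimorphism, and if it also admits a retract (hence is a monomorphism) it is an isomorphism, which by \cite[Theorem 2.16]{bhs} is equivalent to $\pi_0^{\A^1}(\widetilde{X})$ being $\A^1$-invariant. This completes the deduction. There is essentially no obstacle here: the entire content is Theorem~\ref{nonunisurface}(2), and the corollary is a formal consequence via the cited lemmas. The only point requiring a line of care is checking that the hypotheses of \cite[Lemma 4.1]{bhs} and \cite[Lemma 2.16]{bhs} are met, namely that $\mathcal{S}^2(\widetilde{X})$ is $\A^1$-invariant — but this is immediate from the stabilization $\mathcal{S}^2(\widetilde{X})\simeq\mathcal{S}^3(\widetilde{X})$, since an $\A^1$-chain connected components sheaf that equals its own further iteration is $\A^1$-invariant.

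\begin{proof}
By Theorem~\ref{nonunisurface}(2), $\mathcal{S}^2(\widetilde{X})\simeq\mathcal{S}^3(\widetilde{X})$; applying $\mathcal{S}$ repeatedly shows $\mathcal{S}^n(\widetilde{X})\simeq\mathcal{S}^{n+1}(\widetilde{X})$ for all $n\geq 2$, so the canonical map $\mathcal{S}^2(\widetilde{X})\to\mathcal{L}(\widetilde{X})$ is an isomorphism and $\mathcal{S}^2(\widetilde{X})$ is $\A^1$-invariant. By \cite[Lemma 4.1]{bhs}, the canonical morphism $\pi_0^{\A^1}(\widetilde{X})\to\mathcal{S}^2(\widetilde{X})\simeq\mathcal{L}(\widetilde{X})$ admits a retract. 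The claim now follows from \cite[Lemma 2.16]{bhs}.
\end{proof}
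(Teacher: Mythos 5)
Your argument is correct and is essentially the paper's own proof: Theorem~\ref{nonunisurface}(2) gives the stabilization identifying $\mathcal{S}^2(\widetilde{X})$ with $\mathcal{L}(\widetilde{X})$, \cite[Lemma 4.1]{bhs} yields a retract of the canonical morphism $\pi_0^{\A^1}(\widetilde{X})\to \mathcal{S}^2(\widetilde{X})\simeq \mathcal{L}(\widetilde{X})$, and \cite[Lemma 2.16]{bhs} concludes the $\A^1$-invariance. The only difference is that you spell out the routine intermediate checks (stabilization for all $n\geq 2$ and $\A^1$-invariance of $\mathcal{S}^2(\widetilde{X})$) that the paper leaves implicit.
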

\begin{proof}
It follows from Theorem~\ref{nonunisurface} and \cite[Lemma 4.1]{bhs} that the canonical morphism 
$$\pi_0^{\A^1}(\widetilde{X})\to \mathcal{S}^2(\widetilde{X})\simeq \mathcal{L}(\~X)  $$
admits a retract. Then the corollary follows from~\cite[Lemma 2.16]{bhs}.
\end{proof}

\begin{remark}\label{notuninotmono}
In general for a non-uniruled surface $B$, the morphism of sheaves $\mathcal{S}(\widetilde{X})\to \mathcal{S}^2(\widetilde{X})$ might not be a monomorphism. For example, for $B$ an abelian surface, hence $\A^1$-rigid, we have $\mathcal{S}(\widetilde{X})\to \mathcal{S}^2(\widetilde{X})$ is not a monomorphism as it follows from Theorem~\ref{a1rigidsurface}(3).
\end{remark}


\begin{remark}\label{sing*}
Theorem~\ref{a1rigidsurface}(3), and the Remark \ref{notuninotmono} provide examples of $\widetilde{X}$ such that $Sing_*^{\A^1}(\widetilde{X})$ is not $\A^1$-local.
If  $\widetilde{X}$ is such that $\mathcal{S}(\widetilde{X})\to \mathcal{S}^2(\widetilde{X})$ is not a monomorphism, then $Sing_*^{\A^1}(\widetilde{X})$ is not $\A^1$-local in the sense of \cite[Definition 2.1, page 106]{mv99}. For if  $Sing_*^{\A^1}(\widetilde{X})$ were $\A^1$-local, then the morphism 
$\mathcal{S}(\widetilde{X})\to \pi_0^{\A^1}(\widetilde{X})$ would be an isomorphism. \end{remark}

\end{section}

\begin{section}{Ghost homotopies on blow up of threefolds admitting $\pone$-fibration over a surface} 
\label{S4}

For the rest of the paper, we follow the following notations.

\begin{notation}
\label{notation}

\begin{enumerate}
\item Let $U=\Spec R$, where $(R, \mathfrak{m})$ is the Henselization of the local ring at a smooth point of a variety over $k$. 
\item For a local $k$-algebra $A$, $A^h$ denotes the \textit{Henselization} of the local ring $A$.
\item $<a, b>$ denotes the ideal in $R$ generated by elements $a, b\in R$.
\item For an ideal $J$ in a ring $R$, $rad (J)$ denotes the radical of the ideal $J$.
\item For an ideal $I$ in a ring $R$ and $U=\Spec R$, $U(I)$ denotes the closed subscheme $\Spec R/I$ of $U$. 
\end{enumerate}
\label{notn}
\end{notation}

\begin{definition}\label{liesovergamma}
Let $\phi:\mathcal{F}\to \mathcal{G}$  be a morphism of Nisnevich sheaves of sets and $U$ be an essentially smooth scheme over a field $k$. Let $n\geq 0$ be an integer. We say that an $n$-ghost homotopy $\mathcal{H} $ on $\mathcal{F}$ over $U$, lies over a morphism $\gamma:U\to \mathcal{G}$, when $\gamma\circ f_{\mathcal{H}}=\phi\circ h_{\mathcal{H}}$.
\end{definition}

\begin{remark}\label{rmk1}
We note here a useful remark. Let $Y=\Spec A$ be a $k$-scheme. Let $\alpha_1, \alpha_2: U\to \pone_k\x Y$ over $Y$ \ie the following diagram of $k$-schemes commutes. 
$$\xymatrix{
U  \ar@/^/[r]^{\alpha_1}\ar@/_/[r]_{\alpha_2}\ar[rd]& \pone_k\x Y\ar[d]^{pr_Y}\\
    & Y
}$$ 
Then  $\alpha_1, \alpha_2$ factor through $\Spec A[T]$, so $\alpha_i$ are determined by the ring homomorphisms $A[T]\to R$ sending $T\mapsto r_i$. Now the linear homotopy $T\mapsto r_1(1-T)+r_2T$ determines an $\A^1$-homotopy from $\alpha_1$ to $\alpha_2$.
\end{remark}

We assume throughout the rest of the paper that $k$ is a perfect field. 
 We consider the geometric situation we are interested in. The notation considered in the paragraph \# below, will be used throughout this Section~\ref{S3} unless mentioned otherwise. 
 \paragraph{\#}
 Let $B$ be a smooth proper variety over $k$ of dimension 2.
  Let $X$ be a smooth proper variety over $k$ and $\pi: X\to B$ be a morphism of schemes over $k$ such that $\pi$ is a $\mathbb{P}^1$-fibration. Let $W$ be a smooth irreducible curve in $B$. Let $Z$ be a smooth closed subscheme of $X$ such that $Z$ is 
 a section of the $\pone$-fibration on $W$, pulled back from $X$. 
 Let $\widetilde{X}$ be the blow up of $X$ along the closed subscheme $Z$. 
 
In order to understand $\mathcal{S}^n(\~X)$ we need to understand its Nisnevich stalks \ie $\mathcal{S}^n(\~X)(U)$ for $U=\Spec R$, where $(R, \mathfrak{m})$ is the Henselization of the local ring at a smooth point of a variety over $k$. Given two sections $\alpha_1, \alpha_2\in\mathcal{S}^n(\~X) $, we need to study the $n$-ghost homotopies $h_{\mathcal{H}}:Sp({\mathcal{H}})\to \widetilde{X}$ of $\~X$ from $\alpha_1$ to $\alpha_2$.

 We further assume that, there is a morphism $\gamma:U\to B$ such that the $n$-ghost homotopy lies over the morphism $\gamma$ (in the sense of the Definition~\ref{liesovergamma}), so that we have the following commutative diagram 
 $$\xymatrix{ 
    Sp(\calH) \ar[rdd]_{f_{\calH}} \ar[rr]^{h_{\calH}} \ar@{-->}[rd]    &          &  \~X \ar[d]\\
      & X\times _{\gamma, B}U\ar[r]\ar[d] & X\ar[d]^{\pi}\\
      &      U\ar[r]^{\gamma}    & B 
 }$$
 
 The assumption on the $n$-ghost homotopy implies that, the morphism $h_{\mathcal{H}}:Sp({\mathcal{H}})\to\widetilde{X}\to X$ factors though  $X\times _{\gamma, B}U\to U$ which lifts to the blow up of $X\times _{\gamma, B}U$ along the ideal sheaf $\mathcal{I}_{\gamma}$ associated to the closed subscheme given by the inverse image of $Z$ under the morphism $X\times _{\gamma, B}U\to X$.

  We prove a series of lemmas (Lemma~\ref{lemma1}, Lemma~\ref{lemma2}, Lemma~\ref{lemma3}, and Lemma~\ref{lemma4}) depending on the  \textbf{codimension of the images in $B$ of the generic point $\eta$ and the closed point $u$ of $U$ under the morphism $\gamma:U\to B$}.

\begin{lemma}
\label{lemma1}
Let $\alpha_1$, $\alpha_2$ be sections of $\widetilde{X}$ over $U$ which are connected by an $n$-ghost homotopy $h_{\mathcal{H}}:Sp({\mathcal{H}})\to \widetilde{X}$ for some $n>0$. We further assume that, there is a morphism $\gamma:U\to B$ such that the $n$-ghost homotopy lies over the morphism $\gamma$ in the sense of the Definition~\ref{liesovergamma}. Assume that the morphism $\gamma:U\to B$ satisfies either of the following properties. 
\begin{enumerate}[label=\ref{lemma1}(\arabic*)]
\item\label{lemma11} $\gamma(u)$ is the generic point of $B$.
\item\label{lemma12} $\gamma(\eta)$ is a closed point $b$ of $B$.
\end{enumerate}
Then the sections  $\alpha_1$, $\alpha_2$ are 1-ghost homotopic and map to the same element in  $\pi_0^{\A^1}(\widetilde{X})(U)$.
\end{lemma}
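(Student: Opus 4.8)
\textbf{Proof plan for Lemma~\ref{lemma1}.}

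The plan is to handle the two cases~\ref{lemma11} and~\ref{lemma12} separately, in each case reducing the $n$-ghost homotopy on $\widetilde{X}$ to an honest $\A^1$-chain homotopy by showing that the blow-up map $\widetilde{X}\to X$ is irrelevant along the total space of $\mathcal{H}$. The key observation throughout is that the morphism $h_{\mathcal{H}}\colon Sp(\mathcal{H})\to\widetilde{X}\to X$ factors through the pullback $X\times_{\gamma,B}U$, and that this pullback is, by Remark~\ref{rmk0}, étale-locally a trivial $\pone$-bundle over $U$; since $U=\Spec R$ is Henselian local, it is in fact $\pone_U$ globally, so Remark~\ref{rmk1} applies once we know the ghost homotopy avoids the centre $Z'$ (the inverse image of $Z$ in $X\times_{\gamma,B}U$).

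In case~\ref{lemma11}, where $\gamma(u)$ — hence $\gamma(\eta)$ too — is the generic point of $B$: since $W$ is a curve in $B$ and $Z$ sits over $W$, the inverse image $Z'$ of $Z$ in $X\times_{\gamma,B}U$ is supported over $\gamma^{-1}(W)$, which is a proper closed subscheme of $U$ not containing $u$ (as $\gamma(u)\notin W$). But $U$ is Henselian local, so a closed subscheme not containing the closed point is empty; thus $Z'=\emptyset$, the blow-up $\widetilde{X}\times_X (X\times_{\gamma,B}U)\to X\times_{\gamma,B}U$ is an isomorphism, and $h_{\mathcal{H}}$ factors through $\pone_U$. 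By Remark~\ref{rmk1}, the two sections $\alpha_1,\alpha_2\colon U\to\pone_U$ are connected by a single linear $\A^1$-homotopy, hence are $1$-ghost homotopic (indeed $0$-ghost homotopic), and consequently map to the same element of $\pi_0^{\A^1}(\widetilde{X})(U)$.

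In case~\ref{lemma12}, where $\gamma(\eta)$ — hence all of $\gamma(U)$, by irreducibility and specialization — is a single closed point $b\in B$: the morphism $\gamma$ factors through $\Spec k(b)\hookrightarrow B$, so $X\times_{\gamma,B}U\simeq \pone_{k(b)}\times_k U$ is a constant $\pone$-bundle over $U$ pulled back from a point. If $b\notin W$, then again $Z'=\emptyset$ and we conclude exactly as above via Remark~\ref{rmk1}. If $b\in W$, then $Z'$ is the constant section $s(b)\times U\subseteq \pone_{k(b)}\times U$, and the blow-up of $\pone_{k(b)}\times U$ along this constant section is again a $\pone$-bundle (in fact a Hirzebruch-type surface bundle) that is trivial over the Henselian local base $U$, pulled back from $\Spec k(b)$; so $h_{\mathcal{H}}$ factors through a constant $\pone$-bundle $\pone_{k(b)}\times U$ over $U$ and Remark~\ref{rmk1} again yields a linear $\A^1$-homotopy between $\alpha_1$ and $\alpha_2$. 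The main obstacle is the subcase $b\in W$: one must check carefully that blowing up a constant section of a constant $\pone$-bundle over the Henselian local $U$ does not introduce any genuine $U$-dependence — i.e. that the resulting surface bundle is still pulled back from $\Spec k(b)$ and hence $\A^1$-rigid-like enough that the chain-homotopy criterion of Remark~\ref{rmk1} (applied after identifying the relevant affine charts) closes the argument. Granting this, the sections are $1$-ghost homotopic and agree in $\pi_0^{\A^1}(\widetilde{X})(U)$.
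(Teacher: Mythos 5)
Your case~\ref{lemma11} is essentially the paper's own argument and is correct: since $\gamma(u)$ is the generic point of $B$ and $U$ is local, $\gamma^{-1}(W)$ is a closed subscheme of $U$ missing the closed point, hence empty, so $\gamma$ factors through $B-W$; the $\pone$-fibration trivializes there (the paper trivializes over an affine open of $B-W$, you trivialize over the Henselian local $U$ --- both work), the blow-up is an isomorphism over that locus, and the linear homotopy of Remark~\ref{rmk1} finishes. The same goes for case~\ref{lemma12} when $b\notin W$.

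The genuine gap is in case~\ref{lemma12} with $b\in W$. You claim that $h_{\mathcal{H}}$ and the sections factor through (the blow-up along $Z'$ of) $X\times_{\gamma,B}U\simeq \pone_{k(b)}\times U$. First, a small point: $Z'=\{s(b)\}\times U$ has codimension one, i.e.\ it is a Cartier divisor, so the blow-up along it is an isomorphism, not a ``Hirzebruch-type'' bundle; this is harmless but signals that the relevant geometry is elsewhere. The fatal point is that the canonical lift $X\times_{\gamma,B}U\to\widetilde{X}$ has image only the strict transform of the fibre $\pi^{-1}(b)$, whereas the fibre of $\widetilde{X}\to B$ over $b$ is reducible: it is the union of that strict transform and the exceptional line $E_{s(b)}\cong\pone_{k(b)}$ lying over $s(b)$. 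A section $\alpha_i$ lying over $\gamma$ (indeed any component of $Sp(\mathcal{H})$) may map into the exceptional component --- for instance any nonconstant morphism $U\to E_{s(b)}$ is such a section, since its composite to $B$ is the constant map to $b$, which is exactly $\gamma$ in this case --- and such a section does not factor through your trivial $\pone$-bundle; one cannot invoke the universal property of the blow-up of $X\times_{\gamma,B}U$ there, because the pulled-back ideal of $Z$ is zero (not invertible) on such components. So the reduction to Remark~\ref{rmk1} collapses precisely in the subcase you yourself flagged as the ``main obstacle''. The paper's proof sidesteps this entirely: any morphism $U\to\widetilde{X}$ over $\gamma$ factors through the fibre of $\widetilde{X}\to B$ over $b$, which is a connected scheme all of whose irreducible components are isomorphic to $\pone_{k(b)}$, and $U$-points of such a connected chain of $\pone$'s are $\A^1$-chain homotopic (connect each section, within the component containing it, to the intersection point). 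Replacing your factorization claim by this fibre argument is what is needed to close case~\ref{lemma12}.
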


\begin{proof}
First let us consider the case~\ref{lemma11}. By assumption on the morphism $\gamma: U\to B$, $\gamma(u)$ is the generic point of $B$. So $\gamma(u)\in \Spec A$ for some affine open subscheme $\Spec A$ of $B-W$. Hence the morphism
 $\gamma$ factors though $\Spec A$, since $U$ is a local affine scheme. We can further choose $\Spec A$ such that the $\pone$-fibration $\pi: X\to B$, when restricted to $\Spec A$ is trivial \ie isomorphic to $\pone\x \Spec A$. In particular, the sections $\alpha_1, \alpha_2: U\to\~X\to X$ factor through 
 $\pone\times \Spec A$. Hence,  $\alpha_1, \alpha_2$ are $\A^1$-homotopic by the Remark~\ref{rmk1}.   
 
On the other hand,  consider the case~\ref{lemma12}. By assumption, $\gamma(\eta)$ is a closed point $b$ of $B$. Then any morphism $U\to \widetilde{X}$ lying over $\gamma$ factors through the fiber of $\widetilde{X}\to B$ over $b$, which is a connected scheme with each irreducible component isomorphic to $\pone_{k(b)}$.
Hence, the morphisms $\alpha_1, \alpha_2 :U\to \widetilde{X}$ lying over $\gamma$ are $\A^1$-chain homotopic. 
\end{proof}

\begin{lemma}
\label{lemma2}
Let $\alpha_1$, $\alpha_2$ be sections of $\widetilde{X}$ over $U$ which are connected by an $n$-ghost homotopy $h_{\mathcal{H}}:Sp({\mathcal{H}})\to \widetilde{X}$ for some $n>0$. We further assume that, there is a morphism $\gamma:U\to B$ such that the $n$-ghost homotopy lies over the morphism $\gamma$ in the sense of the Definition~\ref{liesovergamma}. Assume that the morphism $\gamma:U\to B$ is such that 
\begin{enumerate}[label=\ref{lemma2}(\arabic*)]
\item $\gamma(u)=\gamma(\eta)=y$ of codimension 1 in $B.$
\end{enumerate}
Then the sections  $\alpha_1$, $\alpha_2$ are 1-ghost homotopic and map to the same element in  $\pi_0^{\A^1}(\widetilde{X})(U)$.
\end{lemma}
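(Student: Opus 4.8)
\textbf{Proof proposal for Lemma~\ref{lemma2}.}

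The plan is to reduce to a situation where the $n$-ghost homotopy lives entirely over the smooth curve $y$ (viewed with its reduced structure, or rather its normalization), and then invoke the known results for blow-ups of $\pone$-fibrations over a curve. Since $\gamma(u)=\gamma(\eta)=y$ is a codimension-one point of $B$, the morphism $\gamma:U\to B$ factors through the local ring of $B$ at $y$; because $y$ has codimension $1$ and $B$ is a smooth surface, this local ring is a DVR, and $\gamma$ factors through an essentially smooth curve. By the construction recalled in (\#), $Z$ is a section of the $\pone$-fibration over the curve $W\subseteq B$; so one first checks that either $y=W$ (the interesting case, where the blow-up center meets everything in sight) or $y\neq W$ (in which case $\gamma$ misses $W$, the $\pone$-fibration is trivial near the image of $\gamma$, and one finishes as in case~\ref{lemma11} of Lemma~\ref{lemma1} via Remark~\ref{rmk1}).

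In the main case $y=W$, I would argue that the whole total space $Sp(\mathcal{H})$ maps, via $h_{\mathcal{H}}$ followed by $\widetilde{X}\to X$, into $X\times_{\gamma,B}U$, and that after passing to the normalization $\overline{W}$ of $W$ the $n$-ghost homotopy factors through the blow-up of the $\pone$-fibration $X\times_B\overline{W}\to\overline{W}$ along $Z'$, the preimage of $Z$. This surface $Bl_{Z'}(X\times_B\overline{W})$ is a birationally ruled surface over the smooth curve $\overline{W}$. Since $\overline{W}$ is $\A^1$-rigid (it is a smooth curve; if it is rational the surface is rational and one uses \cite[Corollary 3.3]{sawant17}, otherwise it has positive genus and is $\A^1$-rigid), any $n$-ghost homotopy into this surface, after composition with the structure map to $\overline{W}$, lies over a morphism $U\to\overline{W}$. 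One then applies the argument of Proposition~\ref{n-ghostlyingovergamma} (or directly \cite[Theorem 1.5]{bs19} on ghost homotopies for blow-ups of ruled surfaces) to conclude that $\alpha_1,\alpha_2$ are $1$-ghost homotopic and agree in $\pi_0^{\A^1}(\widetilde{X})(U)$.

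The technical heart is verifying that the factorization through $\overline{W}$ is legitimate: each irreducible component of each piece of $Sp(\mathcal{H})$ that is not contracted maps dominantly onto an irreducible component of $W$ (here onto $W$ itself, as it is irreducible), so the composite to $W$ lifts to $\overline{W}$ by the universal property of normalization for morphisms from normal — or at least reduced with dominant-onto-components — schemes; the components that are contracted map to points of $W$, which trivially lift. Then one must check compatibility of these lifts along the Nisnevich covers appearing in the ghost data, so that the lifted homotopy is again a genuine $n$-ghost homotopy on the curve. The main obstacle I anticipate is precisely this bookkeeping — ensuring the lift to $\overline{W}$ is compatible with all the fibre products $V\times_{\A^1_U}V$, $W^1_m$, etc., occurring in Definition~\ref{ghostdef} — rather than any genuinely new geometric input; the geometry is handled by the reduction to a blow-up of a ruled surface over an $\A^1$-rigid curve, which is already understood.
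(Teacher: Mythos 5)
Your handling of the case $\overline{\{y\}}\neq W$ is fine and matches the paper (factor through $\overline{Y}-W$, trivialize the fibration over an affine open, finish with the linear homotopy of Remark~\ref{rmk1}). The gap is in your main case $\overline{\{y\}}=W$. Since $Z$ is a section of the ruled surface $\pi^{-1}(W)\to W$, it is an effective Cartier divisor in $X\times_B\overline{W}=\pi^{-1}(W)$ (note $\overline{W}=W$, as $W$ is already smooth), so $Bl_{Z'}(X\times_B\overline{W})$ is canonically isomorphic to $\pi^{-1}(W)$ itself, i.e.\ to the strict transform $V'\subseteq\widetilde{X}$ of $\pi^{-1}(W)$. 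What the hypothesis that the ghost homotopy lies over $\gamma$ actually gives is only that $h_{\mathcal{H}}$ lands in the scheme-theoretic preimage of $\pi^{-1}(W)$ in $\widetilde{X}$, which is the \emph{total} transform $E\cup V'$, with $E$ the exceptional divisor (a geometrically ruled surface over $Z\simeq W$). There is no reason for $h_{\mathcal{H}}$, or even for $\alpha_1,\alpha_2$ themselves, to factor through $V'$: here $\gamma$ has set-theoretic image the generic point $y$ of $W$, the fibre of $\widetilde{X}\to B$ over $y$ is a union of two copies of $\pone_{k(y)}$ meeting in a point, and a section over $U$ lying over $\gamma$ may perfectly well land in the exceptional copy (e.g.\ the constant section at a $k(y)$-point of the exceptional fibre away from $V'$). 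So the proposed factorization through the single (birationally) ruled surface $Bl_{Z'}(X\times_B\overline{W})$ fails, and with it the appeal to \cite{bs19} or \cite{sawant17} for that surface. The paper's proof of this case instead works with the union $E\cup V'$ of two geometrically ruled surfaces glued along a section and invokes \cite[Proposition 2.12]{bs19} to conclude that $\alpha_1,\alpha_2$ are already $\A^1$-chain homotopic; in particular no $\A^1$-rigidity of $W$ and no rational/positive-genus dichotomy is needed.

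Two further remarks. Invoking ``the argument of Proposition~\ref{n-ghostlyingovergamma}'' inside Lemma~\ref{lemma2} would be circular, since the lemma is one of the cases from which that proposition is assembled; the instinct to quote an external result on ruled surfaces from \cite{bs19} is the right one, but it must be applied to the correct target $E\cup V'$ rather than to $Bl_{Z'}(X\times_B\overline{W})$. Your preliminary observations (that $Sp(\mathcal{H})$ maps into $X\times_{\gamma,B}U$, and that $\gamma$ factors through the curve $W$ with its normalization) are correct, and the concluding step via \cite[Lemma 4.1]{bhs} to pass from a $1$-ghost homotopy to equality in $\pi_0^{\A^1}(\widetilde{X})(U)$ would be fine once the geometric reduction is repaired.
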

\begin{proof}
By the assumption on the morphism $\gamma$, $\gamma(u)=\gamma(\eta)=y$ is of codimension 1 in $B$. Let $Y=\overline{\{y\}}$ the closure of $y$ in $B$ with the reduced closed subscheme structure. 
 Let $\overline{Y}$ be the normalization of $Y$, which is a smooth curve. Since $\gamma:U\to Y$ is dominant, it factors through $\overline{Y}\to Y$. Thus, the $n$-ghost homotopy connecting $\alpha_1, \alpha_2$ factors through the $\pone$-fibration over $\overline{Y}$ given by the pullback $X\times_B \overline{Y}$.
There are two cases: 
\begin{enumerate}
\item[1)] $y\notin W$ i.e. $Y\neq W.$
 \item[2)] $y\in W$, i.e. $Y= W$
\end{enumerate}
 
We consider the cases separately. 

In the case 1) assume that $Y\neq W$. Since $\gamma(u)$ is the generic point of $Y$, $\gamma(u)\in Y-W$. Hence $\gamma:U\to Y$ factors through $Y-W\to Y$. So we can replace $Y$ by $Y-W$. 
Thus $\alpha_1, \alpha_2$ are such that these map the closed point to the generic point of $\overline{Y}$, so the morphisms $\alpha_1, \alpha_2$ factor through $\pone\times \Spec A$, where $\Spec A$ is an affine open subscheme of $\overline{Y}$. Hence  $\alpha_1, \alpha_2$ are $\A^1$-chain homotopic by the Remark~\ref{rmk1}.

 On the other hand, consider the case 2) $Y=W$. Here the morphisms $\alpha_1, \alpha_2:U\to \widetilde{X}$ factor through the total transform of $\pi^{-1}(W)$ in $\widetilde{X},$ which is the union of the exceptional divisor $E$ and the strict transform $V'$ of  $\pi^{-1}(W)$. 
Since $\gamma$ maps the generic point $\eta$ to the generic point of $W$, the $n$-ghost homotopy (in particular, the morphisms $\alpha_1, \alpha_2$) lying over $\gamma$ factors through the total stransform of $\pi^{-1}(W)$ which is a union of the exceptional divisor $E$ and the strict transform $V'$ of $\pi^{-1}(W)$. Since $Z$ is a smooth closed subscheme of $\pi^{-1}(W)$ of codimension 1, $V'$ is isomorphic to $\pi^{-1}(W)$, which is a geometrically ruled surface over the smooth curve $W$. Also $E$ is a geometrically ruled surface over the smooth curve $Z$. Thus, we observe that~\cite[Proposition 2.12]{bs19} implies that the morphisms $\alpha_1, \alpha_2:U\to E\cup V'$ are $\A^1$-chain connected. By composing with the inclusion morphism $E\cup V'\hookrightarrow \widetilde{X}$, we get that the morphisms $\alpha_1, \alpha_2:U\to \widetilde{X}$ are connected by an $\A^1$-chain homotopy.
\end{proof}

Now we prove technical lemmas (Lemma~\ref{lemmaA} and Lemma~\ref{lemmaB}), that will be used in the remaining part of this section. We fix some notation that will be used throughout the rest of this paper. 

Let $\pone_U:=\Proj R[x_0, x_1]$. Let $J\subsetneq R$ be a proper ideal of $R$. Let $X_{\gamma}$ denote the blow-up of $\pone_U$ along an ideal $I_{\gamma}=<J, x_0>$, where $<J, x_0>$ denotes the homogeneous ideal in $R[x_0, x_1]$ generated by $J$ and $x_0$. Let $\phi: X_{\gamma}\to \pone_U$ be the natural map. 

Let $\alpha_1$, $\alpha_2: U\to X_{\gamma}$ over $U$ which are connected by an $n$-ghost homotopy $h_{\mathcal{H}}:Sp({\mathcal{H}})\to \pone_U$ for some $n>0$. 

Fix $i=1, 2$. Now $\phi\circ\alpha_i: U\to X_{\gamma}\to \pone_U$ are given by a tuple $(r_i, s_i)\in R^2$ such that the ideal generated by $r_i$ and $s_i$ is unit ideal \ie $<r_i, s_i>=R$. Now since $R$ is a local domain, either $r_i$ or $s_i$ is a unit in $R$. Now since the maps $\alpha_i$ already lift to the blow up $X_{\gamma}$, we have by~\cite[Proposition 3.7]{bs19},  $$\alpha_1^*(I_{\gamma})=\alpha_2^*(I_{\gamma}).$$ 

Thus, we have $<J, r_1>=<J, r_2>.$
From this and the fact that $J$ is a proper ideal in $R$, we can conclude that $r_1$ is a unit in $R$ iff $r_2$ is a unit in $R$. Now, $\Proj R[x_0, x_1]$ is a union of Zariski open affine schemes $\Spec R[x_0/x_1]$ and $\Spec R[x_1/x_0]$. Since $U$ is a local scheme, $\alpha_i$ factors through either $\Spec R[x_0/x_1]$ or $\Spec R[x_1/x_0]$.  So if $r_1$ is a unit, then both $\alpha_1$, $\alpha_2$ factor through $\Spec R[x_1/x_0]$ and if $r_1$ is not a unit, hence $s_1$ is a unit, then both $\alpha_1$, $\alpha_2$ factor through $\Spec R[x_0/x_1]$. Thus, without loss of generality, we can assume that both the sections factor through $\Spec R[x_0/x_1]$ and are determined by $x_0/x_1\mapsto r_i\in R$. 

By~\cite[Proposition 3.7]{bs19}, applied to the Blow-up $X_{\gamma}$, 
 there exists $r\in R$ such that 
  the ideals $\alpha_1^*(I_{\gamma}), \alpha_2^*(I_{\gamma})$ and $h_{\mathcal{H}}^*(I_{\gamma})$ are generated by $r$. We have $$\alpha_1^*(I_{\gamma})=J+< r_1>=<r>=J+< r_2>= \alpha_2^*(I_{\gamma}).$$
There are two cases:
\begin{itemize}
\item[(A)] $r\in J$, thus $r\mid r_1, r_2$.
\item[(B)] $r=r_1$, so that $J\subseteq <r>$, but $r\notin J$ 
; also we have that $r_2$ is a unit multiple of $r.$  
\end{itemize}

We show below that, if the two sections $\alpha_{1}, \alpha_{2}$ satisfy the condition of the case (A), then the sections are $\A^1$-homotopic. But on the other hand, the necessary condition in (B) is not sufficient to conclude that the sections $\alpha_1, \alpha_2$ are $n$-ghost homotopic. In this case, we get a further necessary condition, which suffices to show that the sections $\alpha_1, \alpha_2$ are $1$-ghost homotopic.

\begin{lemma}\label{lemmaA}
Let $\alpha_1$, $\alpha_2: U\to X_{\gamma}$ be sections of $X_{\gamma}$ over $U$ which are connected by an $n$-ghost homotopy $h_{\mathcal{H}}:Sp({\mathcal{H}})\to X_{\gamma}$ for some $n>0$. Assume that $r\in J$. Then the sections $\alpha_1$, $\alpha_2$ are $\A^1$-homotopic.
\end{lemma}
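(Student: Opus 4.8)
\textbf{Proof plan for Lemma~\ref{lemmaA}.} The plan is to reduce everything to an explicit calculation on the affine chart $\Spec R[x_0/x_1]$ through which, as explained in the paragraph preceding the statement, both sections $\alpha_1,\alpha_2$ factor; there they are given by $x_0/x_1\mapsto r_i$ with $r_i\in R$, and the hypothesis $r\in J$ gives $r\mid r_1$ and $r\mid r_2$ (since $\alpha_i^*(I_\gamma)=J+\langle r_i\rangle=\langle r\rangle$ with $r\in J$ forces $\langle r\rangle=J+\langle r_i\rangle$, and in particular $r_i\in\langle r\rangle$). The first step is to understand the blow-up $X_\gamma=\mathrm{Bl}_{I_\gamma}\pone_U$ over this chart: since $I_\gamma=\langle J,x_0\rangle$ and on $\Spec R[x_0/x_1]$ the variable $x_0$ corresponds to the coordinate $t:=x_0/x_1$, the ideal restricts to $\langle J,t\rangle\subseteq R[t]$, so the blow-up over this chart is $\mathrm{Proj}$ of the Rees algebra of $\langle J,t\rangle$. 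I would note that a section $U\to X_\gamma$ lifting $t\mapsto r_i$ is exactly the data of a generator of the invertible ideal $\alpha_i^*(I_\gamma)=\langle r\rangle$ together with the induced generators of $J$ and $r_i$ as multiples of $r$; write $J=\langle j_1,\dots,j_\ell\rangle$ and $j_m=r\,j_m'$, $r_i=r\,\rho_i$.

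The second and main step is to write down a single $\A^1$-homotopy $h\in X_\gamma(U\times\A^1)$ connecting $\alpha_1$ to $\alpha_2$. The natural candidate is the linear homotopy on the base $\pone_U$, namely $t\mapsto r_1(1-T)+r_2 T = r\big(\rho_1(1-T)+\rho_2 T\big)$ over $U\times\A^1=\Spec R[T]$, in the spirit of Remark~\ref{rmk1}; the point is that this linear homotopy \emph{lifts} to $X_\gamma$. To see the lift, one checks that the pullback of $I_\gamma=\langle J,x_0\rangle$ under this family is the invertible ideal $\langle r\rangle\subseteq R[T]$: indeed its pullback is $\langle j_1,\dots,j_\ell,\ r(\rho_1(1-T)+\rho_2 T)\rangle=\langle r\rangle\cdot\langle j_1',\dots,j_\ell',\ \rho_1(1-T)+\rho_2 T\rangle$, and one must verify the second factor is the unit ideal in $R[T]$. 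This last verification is where the hypotheses really get used: for each point of $U\times\A^1$ either $r$ is already a unit there (in which case we are away from the blown-up locus and the lift is automatic), or $r$ lies in the maximal ideal, and then the condition $\langle J,r_i\rangle=\langle r\rangle$ — equivalently the fact that $\langle j_1',\dots,j_\ell',\rho_i\rangle=R$ already for $i=1,2$ — propagates along the line $T\mapsto\rho_1(1-T)+\rho_2 T$; here I expect to invoke that $R$ (hence $R[T]$ localized appropriately, or the relevant residue fields) lets one conclude the ideal $\langle j_1',\dots,j_\ell',\rho_1(1-T)+\rho_2 T\rangle$ has no common zero, using that $\rho_1$ and $\rho_2$ each generate the unit ideal together with the $j_m'$. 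Once the pullback ideal is invertible, the universal property of the blow-up produces the desired lift $h:U\times\A^1\to X_\gamma$, and by construction $\sigma_0^*h=\alpha_1$, $\sigma_1^*h=\alpha_2$.

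The step I expect to be the genuine obstacle is precisely the verification that the ideal $\langle j_1',\dots,j_\ell',\ \rho_1(1-T)+\rho_2 T\rangle$ is the unit ideal of $R[T]$ (or at least that the linear family's pullback of $I_\gamma$ is invertible, which is all the universal property needs). Morally it should follow because at every prime of $R[T]$ either $r$ is invertible — nothing to prove — or we are in the exceptional locus where the \emph{original} data already forces $\rho_i$ to be a unit modulo $J$; but writing this cleanly may require either a Henselian/local argument point by point, or a slightly more careful choice of the lift (e.g. combining the linear homotopy with the explicit description of $X_\gamma$ over the two standard charts of the exceptional $\pone$). An alternative, possibly cleaner route is to factor the claim: first use $r\mid r_1,r_2$ together with $\langle J,r_i\rangle=\langle r\rangle$ to reduce, after dividing out by $r$, to the situation where the strict transforms $\overline{\alpha_i}:U\to X_\gamma$ hit the exceptional divisor section given by $\rho_i\in R$ with $\langle J',\rho_i\rangle=R$, and then observe this is literally the trivialized situation of Remark~\ref{rmk1} on the exceptional $\pone_U$-bundle. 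I would present whichever of these makes the unit-ideal check shortest, but in any case the heart of the argument is that hypothesis (A), $r\in J$, forces the blown-up ideal to be pulled back to an invertible ideal along the obvious linear homotopy, so no ghostliness is needed and a single honest $\A^1$-homotopy suffices.
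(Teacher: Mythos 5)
Your overall strategy is the paper's: take the linear homotopy $x_0/x_1\mapsto r_1(1-T)+r_2T$ on $\A^1_U=\Spec R[T]$ and lift it to $X_{\gamma}$ by checking that the pullback of $I_{\gamma}=\langle J,x_0\rangle$ is the invertible ideal $\langle r\rangle$, so that the universal property of the blow-up applies. The problem lies exactly at the step you flag as the genuine obstacle. You propose to verify $\langle j_1',\dots,j_\ell',\,\rho_1(1-T)+\rho_2T\rangle=R[T]$ using only the conditions $\langle j_1',\dots,j_\ell',\rho_i\rangle=R$ for $i=1,2$ (equivalently $\langle J,r_i\rangle=\langle r\rangle$) plus some ``propagation along the line''. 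That implication is false, and it has to be: those conditions hold verbatim in case (B) as well (there $\rho_1=1$ and $\rho_2$ is a unit), and in case (B) the linear homotopy does not lift in general. Concretely, with $R=k[x,y]^h_{(x,y)}$, $J=\langle x(y^2+x)\rangle$, $r=r_1=x$, $r_2=x(1+y)$, one has $\langle j',\rho_1\rangle=\langle j',\rho_2\rangle=R$, yet $\langle y^2+x,\,1+yT\rangle$ is a proper ideal of $R[T]$, so the pullback of $I_{\gamma}$ along the linear family is not $\langle r\rangle$; indeed Proposition~\ref{aonechain} and the proof of part (3) of Theorem~\ref{a1rigidsurface} show these two sections are not naively $\A^1$-homotopic at all. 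So no argument that only invokes $\langle J,r_i\rangle=\langle r\rangle$ can close this gap, and neither a Henselian point-by-point analysis nor a more careful choice of chart will rescue it.

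What your sketch never uses at the crucial moment is the case-(A) hypothesis itself. Since $r\in J$ and $J\subseteq\langle r\rangle$ (the latter because $J+\langle r_1\rangle=\langle r\rangle$), you actually have $J=\langle r\rangle$, i.e. $\langle j_1',\dots,j_\ell'\rangle=R$ already, and the verification is a one-line sandwich, which is precisely the paper's computation: $h^*(I_{\gamma})=\langle J,\,r_1(1-T)+r_2T\rangle$ contains $r$ because $r\in J$, and is contained in $\langle r\rangle$ because $J\subseteq\langle r\rangle$ and $r\mid r_1,r_2$; hence it equals $\langle r\rangle$, which is principal with nonzero generator in the domain $R[T]$, so the homotopy lifts and $\sigma_0^*h=\alpha_1$, $\sigma_1^*h=\alpha_2$. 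With this replacement your argument coincides with the paper's proof, and the alternative route via strict transforms and the exceptional locus is unnecessary.
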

\begin{proof}
Consider $h:\A^1_U=\Spec R[T]\to \Spec R[x_0/x_{1}]$ given by 
  \begin{center}

  $x_0/x_1\mapsto r_1(1-T)+r_2T$.
  
  \end{center}
Note that the homotopy $ h: U\times \A^1 \to \mathbb{P}^1_U$ lifts to $X_{\gamma}$ as the pullback
\begin{align*}
h^*(\mathcal{I}_{\gamma})&=<J, h^*(x_0)>\\
                &= <J, r_1(1-T)+r_2T>\\
                &=<r>
            \end{align*}
 i.e. the ideal $h^*(\mathcal{I}_{\gamma})$ is a locally principal ideal of $R[T].$ 
Thus, the morphism $h: U\times \A^1 \to \mathbb{P}^1_U$ lifts to $X_{\gamma}$ which gives an $\A^1$-homotopy joining $\alpha_{1}$ and $\alpha_{2}$.  
\end{proof}

\begin{lemma}\label{lemmaB}
Let $\alpha_1$, $\alpha_2: U\to X_{\gamma}$ be sections of $X_{\gamma}$ over $U$ which are connected by an $n$-ghost homotopy $h_{\mathcal{H}}:Sp({\mathcal{H}})\to X_{\gamma}$ for some $n>0$. Assume that $r=r_1$, so that $J\subseteq <r>$, but $r\notin J$; also we have that $r_2$ is a unit multiple of $r.$  Then 
\begin{equation}
\dfrac{r_2}{r_1}-1\in rad<r_1, s/r_1>,  ~~\text{for all $s\in J$}.
\end{equation}
Conversely, if $r=r_1$, so that $J\subseteq <r>$, but $r\notin J$; also we have that $r_2$ is a unit multiple of $r.$  Then there exists a 1-ghost homotopy from $\alpha_1$ to $\alpha_2$. Furthermore, $\alpha_1$ and $\alpha_2$ map to the same element in $\pi_0^{\A^1}(X_{\gamma})(U)$.
\end{lemma}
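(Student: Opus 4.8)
The statement has two parts: a necessary condition (the congruence $r_2/r_1 - 1 \in \mathrm{rad}\langle r_1, s/r_1\rangle$ for all $s\in J$), and a converse saying that this same hypothesis forces a $1$-ghost homotopy. I will treat the necessary direction first. Since the two sections $\alpha_1,\alpha_2$ are connected by an $n$-ghost homotopy $h_{\mathcal H}:Sp(\mathcal H)\to X_\gamma$, I would unwind \cite[Proposition 3.7]{bs19} as set up in the paragraph preceding the lemma: there is $r\in R$ generating $\alpha_1^*(I_\gamma)=\alpha_2^*(I_\gamma)=h_{\mathcal H}^*(I_\gamma)$, and we are in case (B) where $r=r_1$ and $r_2=ur_1$ for a unit $u$. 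The key point is that the total space $Sp(\mathcal H)$ carries a morphism $h_{\mathcal H}$ to $X_\gamma$, so pulling back $I_\gamma=\langle J, x_0\rangle$ along $h_{\mathcal H}$ and then restricting to the fiber over the closed point of $U$ (using the henselian hypothesis on $R$, Notation~\ref{notation}(1)) constrains how $x_0$ can vary along the homotopy. Concretely, for each $s\in J$ the element $s/r_1$ makes sense in the blow-up chart, and I would show that the homotopy stays inside the locus where $r_1$ and $s/r_1$ cut out the exceptional/strict transform data; tracking the linear (or chain) interpolation between $r_1$ and $r_2$ forces $u-1=r_2/r_1-1$ to vanish modulo $\mathrm{rad}\langle r_1, s/r_1\rangle$, essentially because a homotopy between two sections that both lift to the blow-up cannot ``jump'' across the vanishing locus of these generators. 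This is the same mechanism as in \cite[Proposition 4.10]{bs20} (quoted here as Proposition~\ref{aonechain}), specialized to the pullback ideal $I_\gamma$.

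For the converse, I am given $r=r_1$, $J\subseteq\langle r_1\rangle$, $r_1\notin J$, and $r_2=ur_1$ with $u$ a unit; I must produce a $1$-ghost homotopy from $\alpha_1$ to $\alpha_2$ and check that $\alpha_1,\alpha_2$ agree in $\pi_0^{\A^1}(X_\gamma)(U)$. The plan is to build the $1$-ghost homotopy explicitly following Definition~\ref{ghostdef}: choose a Nisnevich cover $V\to\A^1_U$ that trivializes the relevant data — concretely one adjoins a square root or a suitable factorization so that on $V$ the interpolation between $r_1$ and $r_2=ur_1$ becomes a genuine $\A^1$-homotopy lifting to $X_\gamma$. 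The cover is dictated by the radical membership: writing $u-1$ in terms of $\mathrm{rad}\langle r_1, s/r_1\rangle$ tells us exactly which étale/Nisnevich extension of $R[T]$ splits the obstruction. On the overlap $W\to V\times_{\A^1_U}V$ one then needs a chain of $\A^1$-homotopies (the data $\mathcal H^W$), and these are produced by Remark~\ref{rmk1}: once we are on a chart $\Spec A[T]$ mapping to $\pone$, any two sections over $A$ are connected by the linear homotopy $T\mapsto r(1-T)+r'T$, provided that linear path lifts to the blow-up, which it does because the pullback of $I_\gamma$ along the linear path is again locally principal (generated by $r$) by the computation in Lemma~\ref{lemmaA}. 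Assembling these pieces gives the $1$-ghost homotopy.

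Finally, to conclude that $\alpha_1$ and $\alpha_2$ map to the same element of $\pi_0^{\A^1}(X_\gamma)(U)$, I would invoke the general fact that an $n$-ghost homotopy on a sheaf $\mathcal F$ over a henselian local $U$ witnesses equality in $\pi_0^{\A^1}(\mathcal F)(U)$ — this is part of the formalism of \cite{bhs} relating ghost homotopies to $\pi_0^{\A^1}$, and in particular a $1$-ghost homotopy certainly suffices. Alternatively, since $X_\gamma$ is a blow-up of $\pone_U$ which is a surface fibered over $U$ with rational or ruled structure, one can appeal to \cite[Proposition 2.12]{bs19} as was done in Lemma~\ref{lemma2}.

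\textbf{Main obstacle.} I expect the hard part to be the necessary direction: extracting the precise radical-ideal congruence from the existence of an abstract $n$-ghost homotopy, rather than from a single $\A^1$-homotopy. The difficulty is combinatorial/geometric bookkeeping — an $n$-ghost homotopy is a tree of homotopies over Nisnevich covers $V\to\A^1_U$, $W\to V\times_{\A^1_U}V$, and one must propagate the constraint ``$h_{\mathcal H}^*(I_\gamma)$ is principal, generated by $r$'' down through all the levels and across all the covers, then specialize at the closed point of $U$, to see that the unit $u=r_2/r_1$ is congruent to $1$ modulo each $\mathrm{rad}\langle r_1,s/r_1\rangle$. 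Controlling this uniformly in $s\in J$ (not just for a single generator) and making sure the henselian local structure of $R$ is used correctly to pass between the generic and closed fibers is the technical heart of the argument.
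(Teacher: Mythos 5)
Your proposal does not supply the two constructions that actually carry the paper's proof, and one step as written would fail. For the necessity of the radical condition you only assert that the homotopy ``cannot jump across the vanishing locus'', and in your closing paragraph you propose to propagate the principality constraint inductively through the whole tree of Nisnevich covers constituting the $n$-ghost homotopy. The paper does nothing of the sort: it works with the total space $Sp(\mathcal{H})$ all at once, lifts the ghost homotopy to the auxiliary blow-up $X_r=Bl_{\langle r,x_0\rangle}\pone_U$ (possible since $h_{\mathcal{H}}^*\langle r,x_0\rangle=\langle r\rangle$), observes that the second projection exhibits $X_r$ as a blow-up of the \emph{other} ruling $\Proj R[y_1,y_2]$ at $\mathcal{Z}(\langle r,y_2\rangle)$, blows up once more at $\mathcal{Z}(\langle s/r,y_1\rangle)$, and invokes \cite[Proposition 3.7, Remark 3.8]{bs19} to see that the image of the ghost homotopy avoids both centers. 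Restricting to the closed subscheme $U(\langle r,s/r\rangle)$ (not to the closed fiber of $U$), the homotopy then lands in $\gm\times U(\langle r,s/r\rangle)$, and $\A^1$-rigidity of $\gm$ forces it to be constant, which is precisely the congruence $r_2/r_1-1\in \mathrm{rad}\langle r_1,s/r_1\rangle$. This switch-of-rulings plus $\gm$-rigidity argument is the content of the forward direction and is absent from your sketch; citing Proposition~\ref{aonechain} does not supply it, since that result concerns naive $\A^1$-chain homotopies rather than $n$-ghost homotopies, and the tree-by-tree bookkeeping you anticipate as the ``technical heart'' is exactly what the total-space formalism is designed to avoid.

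For the converse there is a concrete error: you claim the linear interpolation $T\mapsto r_1(1-T)+r_2T$ lifts to the blow-up ``because the pullback of $I_\gamma$ is again locally principal (generated by $r$) by the computation in Lemma~\ref{lemmaA}''. That computation uses $r\in J$; here $r=r_1\notin J$, and writing $\delta=r_2/r_1-1$ the pullback is $\langle J, r_1(1+\delta T)\rangle=r_1\langle (J:\langle r_1\rangle),1+\delta T\rangle$, which is in general not locally principal. Indeed, if the naive linear homotopy lifted, $\alpha_1$ and $\alpha_2$ would be $\A^1$-chain homotopic, contradicting Proposition~\ref{aonechain} and the example in part (3) of Theorem~\ref{a1rigidsurface}; the whole point of case (B) is that in general only a $1$-ghost homotopy exists. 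The paper's construction is also not an \'etale/``square root'' cover: it is the Zariski cover $V_1\coprod V_2$ of $\A^1_U=\Spec R[S]$ with $V_1=\A^1_U-\mathcal{Z}(\langle (J:\langle r\rangle),1+\delta S\rangle)$ and $V_2=\A^1_U-\mathcal{Z}(\langle r\rangle)$, where the radical condition is exactly what makes this a cover; one maps these to the second ruling by $y_1/y_2\mapsto 1+\delta S$ and $y_1/y_2\mapsto 1$, lifts both to the further blow-up $X_0=Bl_{\langle (J:\langle r\rangle),y_1\rangle}X_r$, connects them over $V\times_{\A^1_U}V$ by linear homotopies in the $y$-coordinates, and only then composes with $X_0\to X_\gamma$; the final statement about $\pi_0^{\A^1}$ is, as you say, \cite[Lemma 4.1]{bhs}. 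So while your high-level plan (necessity via an avoidance/rigidity constraint, converse via a cover dictated by the radical membership) points in the right direction, the specific mechanisms you propose are either missing or incorrect.
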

\begin{proof}
Let $h_{\mathcal{H}}:Sp({\mathcal{H}})\to \mathbb{P}^1_U$ be an $n$-ghost homotopy, for $n>0$, connecting $\alpha_1$ and $\alpha_2$ which lifts to $X_{\gamma}$. 
Then we show that, 
\begin{equation}
\dfrac{r_2}{r_1}-1\in rad<r_1, s/r_1>,  ~~\text{for all $s\in J$.}
\end{equation}
 Consider $X_r:= Bl _{\mathcal{I}_r}\mathbb{P}^1_U$  the blow up of $\mathbb{P}^1_U$ along the ideal sheaf $\mathcal{I}_r$ associated to the homogenous ideal $<r, x_0>$ in $R[x_0, x_1]$. It is clear to see that the $n$-ghost homotopy $h_{\mathcal{H}}$ lifts to $X_r$, indeed $h_{\mathcal{H}}^*(<r, x_0>)=<r, h_{\mathcal{H}}^*(x_0)>=<r>$. In particular, the sections $\alpha_1$ and $\alpha_2$ lift to  $X_r$ (denoted by $\alpha_1'$ and $\alpha_2'$ respectively). 
 
$X_r$ is given in the chart $x_1\neq 0$ by the equation: $ry_1=\dfrac{x_0}{x_1}y_2.$
 
 Fix $s\in J$. Consider the extension $\widetilde {<s, x_0>}$ of the ideal $<s, x_0>$ in $\mathbb{P}^1_U$ to $X_r$ : On the chart $y_2\neq 0$: 
 \begin{align*}
 \widetilde {<s, x_0>}&=<s, \dfrac{x_0}{x_1}>\\
                            &=<s, \dfrac{x_0}{x_1}><1, \dfrac{y_1}{y_2}>  \\
                            &=<\dfrac{s}{r}, \dfrac{y_1}{y_2}><r, \dfrac{x_0}{x_1}>. 
 \end{align*}
  
On the chart $y_1\neq0$, we have $r= \dfrac{y_2}{y_1}\dfrac{x_0}{x_1}$, hence

 \begin{align*}
 \widetilde {<s, x_0>}&=<s, \dfrac{x_0}{x_1}>\\
                            &=<r, \dfrac{x_0}{x_1}>  ~(\text{since $s\in <r>$}).
 \end{align*}
The exceptional locus in the blow up $X_r$ is given by the ideal $<r, x_0>$. The projection onto the  second component given by 
\begin{equation}\label{blXr}
 X_r   \to \mathbb{P}^1_U\times_U \Proj R[y_1, y_2]  \to \Proj R[y_1, y_2]
\end{equation}
is the blow up along the closed subscheme $\mathcal{Z}(<r, y_2>)$. \\
We define $X_{r, s/r}\to X_r$ as the blow-up of $X_r$ along the ideal $<s/r, y_1>$.
Hence, the projection morphism $X_{r, s/r}\to  \Proj R[y_1, y_2]$ is the blow-up along the closed subscheme $$\mathcal{Z}(<r, y_2>)\cup \mathcal{Z}(<s/r, y_1>).$$

\begin{equation*}
\begin{tikzpicture}
[back line/.style={densely dotted},cross line/.style={loosely dotted}]
\matrix (m) [matrix of math nodes, row sep=2 em,column sep=2.em, text height=1.5ex, text depth=0.50ex]
{
            & X_{r, s/r} &                        & Bl_{\mathcal{Z}(<r, y_2>)\cup \mathcal{Z}(<s/r, y_1>)} \Proj R[y_1, y_2]\\
 Sp(\mathcal{H}) & X_r    & \mathbb{P}^1_U\times_U \Proj R[y_1, y_2]  & \Proj R[y_1, y_2]\\
            & \mathbb{P}^1_U & \\
  };
\draw [right hook->, font=\scriptsize] (m-2-2) -- node[above] {$i$} (m-2-3);
\path[->,font=\scriptsize]
  (m-1-2) edge node [auto] {$\simeq, \theta$} (m-1-4)
   edge node [auto] {$\phi$} (m-2-2)
(m-1-4) edge node [auto] {$\eta$} (m-2-4)
 (m-2-1) edge node [auto] {$h'_{\mathcal{H}}$} (m-2-2)
 edge node [auto] {$h''_{\mathcal{H}}$} (m-1-2)
  (m-2-2) 
    edge node [auto] {} (m-3-2)
  (m-2-3) edge node [auto] {$pr_2$} (m-2-4)
   
;
\end{tikzpicture}
\end{equation*}
We apply~\cite[Proposition 3.7, Remark 3.8]{bs19} to the $n$-ghost homotopy given by the composition $\theta\circ h''_{\mathcal{H}} $. Since $\eta\circ \theta\circ h''_{\mathcal{H}}(0)$ avoids the closed subscheme $\mathcal{Z}(<r, y_2>)\cup \mathcal{Z}(<s/r, y_1>)$, the image of the $n$-ghost homotopy $\eta\circ \theta\circ h''_{\mathcal{H}}$ avoids the closed subscheme $\mathcal{Z}(<r, y_2>)\cup \mathcal{Z}(<s/r, y_1>)$. \\
  Thus, the restriction of $\eta\circ \theta\circ h''_{\mathcal{H}}$ to the closed subscheme $U(<r, s/r>)$ factors through 
\begin{center}
$\pone_{U(<r, s/r>)}-(\{(0:1), (1:0)\})\times U(<r, s/r>)\simeq \mathbb{G}_m\times U(<r, s/r>).$
\end{center}
Since $\mathbb{G}_m$ is $\A^1$-rigid, the restriction of $\eta\circ \theta\circ h''_{\mathcal{H}}$ to $\mathbb{G}_m\times U(<r, s/r>)$ is constant. In particular, $$\eta\circ \theta\circ h''_{\mathcal{H}}(0)=\eta\circ \theta\circ h''_{\mathcal{H}}(1).$$ Note that $\eta\circ \theta\circ h''_{\mathcal{H}}(0)$ is given by $y_1/y_2\mapsto 1$ and $\eta\circ \theta\circ h''_{\mathcal{H}}(1)$ is given by $y_1/y_2\mapsto r_2/r_1$. In other words, modulo every prime ideal $\mathfrak{p} $ containing  $<r, s/r>$,  $r_2/r_1=1$ in $R$
i.e. $r_2/r_1-1\in rad(<r, s/r>)$.\\ 
Thus, we have showed that, if two sections $\alpha_1, \alpha_2:U\to X_{\gamma}$ are $n$-ghost homotopic for some $n>0$, then 
$$
\dfrac{r_2}{r_1}-1\in rad<r_1, s/r_1>
$$
for all $s\in J$.\\
Conversely, we will show that if the two sections $\alpha_1, \alpha_2:U\to X_{\gamma}$ are such that
$$
\dfrac{r_2}{r_1}-1\in rad<r_1, s/r_1>
$$
for all $s\in J$, then $\alpha_1$ and $ \alpha_2$ are 1-ghost homotopic.\\
Under the above algebraic condition, we construct an explicit 1-ghost homotopy from $\alpha_1$ to $\alpha_2$. For this, consider the following Zariski open cover $V:=V_1\coprod V_2$ of $\A^1_U=\Spec R[S]$, where
\begin{center}
$V_1:=\A^1_U-\mathcal{Z}(<(J:<r>), 1+\delta S>)$\\
$V_2:=\A^1_U-\mathcal{Z}(<r>)$.
\end{center}
Here $(J:<r>)$ denotes the ideal $\{x\in R : rx\in J\}$.
Indeed, if $\mathfrak{p}\notin V_1$ i.e. $\mathfrak{p}$ is a prime ideal in $R[S]$ such that $\mathfrak{p}\supseteq <(J:<r>), 1+\delta S>$, then $\delta$ is a unit modulo  $\mathfrak{p}$. But $\delta\in rad<r, s/r>$ for all $s\in J$, hence  $\mathfrak{p}\nsupseteq <r, (J:<r>)>$. Thus $r\notin  \mathfrak{p}$ i.e. $\mathfrak{p}\in V_2$.\\
We define $h_i:V_i\to \Proj R[y_1, y_2]$ for $i=1, 2$ and take $h=h_1\coprod h_2 : V\to  \Proj R[y_1, y_2]$.  The morphism $h_1: V_1\to\Proj R[y_1, y_2]$ is given by $y_1/y_2\mapsto 1+\delta S.$ The morphism  $h_1$ lifts to a morphism $V_1\to X_{r}$ by using~(\ref{blXr}), since $h_1^*(<r, y_2>)=<1>$. Furthermore, the morphism $h_1: V_1\to X_r$ lifts to  $X_{0}:=Bl_{<(J:<r>), y_1>} X_{r}$, the blow up of $X_r$ along the ideal $<(J:<r>), y_1>$.   
Indeed, $$h_1^*(<(J:<r>), y_1>)=<(J:<r>), 1+\delta S>$$ is locally principal on $V_1$.
The morphism $h_2 : V_2 \to\Proj R[y_1, y_2]$ is given by $y_1/y_2\mapsto 1.$ By the similar argument as before, $h_2$ lifts to $X_{0}$.
We next construct a 1-ghost homotopy to   $X_{0}$ and compose this to the canonical morphism $X_{0}\to X_{\gamma}$ to get the desired claim.\\
The morphisms $\sigma_0, \sigma_1:U\to \A^1_U$  factor through $V_1\hookrightarrow \A^1_U$. The lifts $\widetilde{\sigma_0}, \widetilde{\sigma_1}$ of $\sigma_0, \sigma_1$ to $V$ are given by $U\to V_1\to V$, respectively. Write $W:=V\times_{\A^1_U} V=\coprod_{i,j=1,2} V_{ij}$, where $V_{ij}:=V_i\cap V_j.$ We will define a homotopy $\A^1\times W\to X_{0}$ on each component of $W$.
For $i=j$, we take the constant homotopy on $V_i\cap V_i=V_i.$ On the component $V_{12}$, we define $\Spec R[T]\times V_{12} \to \Proj R[y_1, y_2]$ as $y_2/y_1\mapsto (1+\delta S)^{-1}(1-T)+T$. On the component $V_{21}$, we define $\Spec R[T]\times V_{21} \to \Proj R[y_1, y_2]$ as $y_2/y_1\mapsto (1-T)+(1+\delta S)^{-1}T$.\\
Thus, we get an $\A^1$-homotopy connecting the two morphisms $$ W=V\times_{\A^1_U} V \rightrightarrows V\xrightarrow{h} X_{0}.$$ Hence, we get a 1-ghost homotopy connecting the sections $\alpha''_1, \alpha''_2: U\to X_{0}.$ 
We have the commutative diagram
$$\xymatrix{
                                  &    & X_0=Bl_{<(J:<r>), y_1>} X_{r}\ar[rd] \ar[ld] &\\
           Sp(\calH) \ar@/^/[urr]\ar@{-->}[r]  & 	X_{\gamma}\ar[rd] &	& X_r=Bl_{<r, y_2>} \Proj R[y_1, y_2] \ar[r]\ar[ld] & \Proj R[y_1, y_2]\\
	&  & \pone_U & &
}$$
Composing with the morphism $X_{0}\to X_{\gamma},$ we get a 1-ghost homotopy between the sections $\alpha_1, \alpha_2: U\to X_{\gamma}.$ Since $\alpha_1$ and $\alpha_2$ are connected by this explicit 1-ghost homotopy, it follows from~\cite[Lemma 4.1]{bhs} that, $\alpha_1$ and $\alpha_2$ map to the same element in $\pi_0^{\A^1}(X_{\gamma})(U)$.
\end{proof}
To summarize, Lemma~\ref{lemmaA} and Lemma~\ref{lemmaB} together give a complete classification of $n$-ghost homotopies of sections of $\pone_U$ that lift to the blow up $X_{\gamma}$. 
  
Now we continue the series of lemmas for the remaining cases, depending on codimension of the images in $B$ of the generic point $\eta$ and the closed point $u$ of $U$ under the morphism $\gamma:U\to B$.
\begin{lemma}
\label{lemma3}
Let $\alpha_1$, $\alpha_2$ be sections of $\widetilde{X}$ over $U$ which are connected by an $n$-ghost homotopy $h_{\mathcal{H}}:Sp({\mathcal{H}})\to \widetilde{X}$ for some $n>0$. We further assume that, there is a morphism $\gamma:U\to B$ such that the $n$-ghost homotopy lies over the morphism $\gamma$ in the sense of the Definition~\ref{liesovergamma}. Assume that the morphism $\gamma:U\to B$ satisfies either of the following properties. 
\begin{enumerate}[label=\ref{lemma3}(\arabic*)]
\item\label{lemma31} $\gamma(\eta)$ is the generic point of $B$ and $\gamma(u)=y$ is of codimension 1 in $B$. Let $Y:=\overline{\{y\}}\neq W$.
\item\label{lemma32} $\gamma(\eta)=y$ is of codimension 1 in $B$ and $\gamma(u)$ is a closed point of $Y=\overline{\{y\}}$, and $Y\neq W$.

\end{enumerate}
Then the sections  $\alpha_1$, $\alpha_2$ are 1-ghost homotopic and map to the same element in  $\pi_0^{\A^1}(\widetilde{X})(U)$.
\end{lemma}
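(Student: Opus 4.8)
The plan is to reduce both cases \ref{lemma31} and \ref{lemma32} to the purely local analysis carried out in Lemma~\ref{lemmaA} and Lemma~\ref{lemmaB}. The key observation common to the two cases is that $\gamma(\eta)\notin W$: in case~\ref{lemma31} the point $\gamma(\eta)$ is the generic point of $B$, while in case~\ref{lemma32} we have $\gamma(\eta)=y$ with $Y=\overline{\{y\}}\neq W$, so $y\in W$ would force the distinct irreducible curves $Y$ and $W$ to coincide. Hence $\eta\notin\gamma^{-1}(W)$, and since $U=\Spec R$ is local this means $\gamma^{-1}(W)$ is a \emph{proper} closed subscheme of $U$; writing $J\subseteq R$ for its defining ideal, we get $J\neq 0$.

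First I would put the situation into the coordinates fixed before Lemma~\ref{lemmaA}. Since every fibre of $\pi$ is isomorphic to $\pone_{k(b)}$, the $\pone$-fibration is Zariski-locally trivial (it is the projectivisation of a rank-$2$ bundle), so $\gamma$ factors through an affine open $\Spec A\subseteq B$ over which $\pi$ is trivial, giving $X\times_{\gamma,B}U\cong\pone_U=\Proj R[x_0,x_1]$; this is where I would invoke Remark~\ref{rmk0} and the fact that $U$ is local. As explained in paragraph~(\#) of this section, the $n$-ghost homotopy $h_{\mathcal H}$, lying over $\gamma$, then factors through $X\times_{\gamma,B}U\to U$ and lifts to $X_\gamma:=Bl_{Z_\gamma}\pone_U$, where $Z_\gamma$ is the inverse image of $Z$; in particular $\alpha_1,\alpha_2$ become sections of $X_\gamma$ over $U$. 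Because $Z$ is a section of $\pi\mid_{\pi^{-1}(W)}$ over $W$, the subscheme $Z_\gamma$ is a section of $\pone_U\to U$ over $\Spec R/J$; as $R/J$ is local this section lies in one of the two standard charts, and after a linear change of the homogeneous coordinates $x_0,x_1$ I may assume $Z_\gamma=\mathcal Z(\langle J,x_0\rangle)$, i.e. the blow-up ideal is $I_\gamma=\langle J,x_0\rangle$. This is precisely the setup of Lemma~\ref{lemmaA} and Lemma~\ref{lemmaB}.

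If $J=R$ --- which is always the case in \ref{lemma31}, and is the case in \ref{lemma32} when $\gamma(u)\notin W$ --- then $Z_\gamma=\emptyset$, $X_\gamma=\pone_U$, and $\alpha_1,\alpha_2\colon U\to\pone_U$ are $\A^1$-homotopic via the linear homotopy of Remark~\ref{rmk1}. Otherwise $J$ is a proper nonzero ideal, and I would invoke the dichotomy $r\in J$ versus $r=r_1$ analysed before Lemma~\ref{lemmaA}, applied to the tuples $(r_i)$ representing $\alpha_i$: either $r\in J$, and then Lemma~\ref{lemmaA} shows $\alpha_1,\alpha_2$ are $\A^1$-homotopic as sections of $X_\gamma$; or $r=r_1$ with $r_2$ a unit multiple of $r$, and then --- since $\alpha_1,\alpha_2$ are by hypothesis $n$-ghost homotopic --- the first half of Lemma~\ref{lemmaB} furnishes the congruences $r_2/r_1-1\in rad\langle r_1,s/r_1\rangle$ for all $s\in J$, whereupon the converse half of Lemma~\ref{lemmaB} produces an explicit $1$-ghost homotopy from $\alpha_1$ to $\alpha_2$ and shows they agree in $\pi_0^{\A^1}(X_\gamma)(U)$. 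In all cases $\alpha_1,\alpha_2$ are $1$-ghost homotopic as sections of $X_\gamma$ and map to the same element of $\pi_0^{\A^1}(X_\gamma)(U)$; composing with the canonical morphism $X_\gamma=Bl_{Z_\gamma}(X\times_{\gamma,B}U)\to Bl_Z X=\widetilde X$ (which exists by the universal property of the blow-up, $Z_\gamma$ being the preimage of $Z$) and using functoriality of $\pi_0^{\A^1}$ and of ghost homotopies under morphisms of sheaves then yields the statement for $\widetilde X$.

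The genuine mathematical content is carried entirely by Lemmas~\ref{lemmaA} and~\ref{lemmaB}, so for this lemma the work lies in the reduction. The step requiring most care is the normalisation $I_\gamma=\langle J,x_0\rangle$: one must argue that the pulled-back $\pone$-fibration is trivial over the local base $U$, that $\gamma(\eta)\notin W$ makes $J$ proper, and that the section $Z_\gamma$ can be straightened to $\{x_0=0\}$ so that Lemmas~\ref{lemmaA}--\ref{lemmaB} apply verbatim. The degenerate possibility $J=R$, where $\gamma$ misses $W$ entirely and a blow-up plays no role, must be split off and handled directly by Remark~\ref{rmk1}, but this part is routine.
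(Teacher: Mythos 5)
Your proposal is correct and takes essentially the same route as the paper: reduce, via the hypothesis that the $n$-ghost homotopy lies over $\gamma$ and the triviality of the pulled-back $\pone$-fibration, to sections of the blow-up $X_{\gamma}$ of $\pone_U$ along $\langle J, x_0\rangle$, and then conclude by the dichotomy of Lemma~\ref{lemmaA}/Lemma~\ref{lemmaB}, with the sub-case where $\gamma$ misses $W$ handled directly by the linear homotopy of Remark~\ref{rmk1}. The only organizational difference is that you split on $J=R$ versus $J$ proper instead of the paper's split on $\gamma(u)\in Y\cap W$ (and you bypass the normalization $\overline{Y}$, which the paper uses but does not really need), which does not change the substance.
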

\begin{proof}
First we consider the case~\ref{lemma31}. In this case $\gamma(\eta)$ is the generic point of $B$ and $\gamma(u)=y$ is of codimension 1 in $B$.  Let $Y=\overline{\{y\}}$ be the closure of $y$ with the reduced closed subscheme structure. Thus, $\alpha_1, \alpha_2$ factor through the $\pone$-fibration given by 
$\pi^{-1}(Y) \to Y$. 

 Furthermore, by assumption we have that $Y\neq W$, hence $\gamma$ factors through $B-W$, so we can replace $Y$ by $Y-W$. 
 In particular $\gamma(u)\in \Spec A$ for some affine open subscheme $\Spec A$ of $B-W$. Hence the morphism
 $\gamma$ factors though $\Spec A$, since $U$ is a local affine scheme. We can further choose $\Spec A$ such that the $\pone$-fibration $\pi: X\to B$, when restricted to $\Spec A$ is trivial \ie isomorphic to $\pone\x \Spec A$. In particular, the sections $\alpha_1, \alpha_2: U\to\~X\to X$ factor through 
 $\pone\times \Spec A$. Hence,  $\alpha_1, \alpha_2$ are $\A^1$-homotopic by the Remark~\ref{rmk1}.   
 
 Now we consider the case~\ref{lemma32}. In this case $\gamma(\eta)=y$ is of codimension 1 in $B$ and $\gamma(u)$ is a closed point of $Y=\overline{\{y\}}$, the closure of $y$ in $B$ with the reduced closed subscheme structure.
Thus, 
$\gamma$ factors through the $\pone$-fibration given by 
$\pi^{-1}(Y) \to Y$. 
By assumption, $Y\neq W$, so either $\gamma(u)\notin W$ or $\gamma(u)\in W$. 
We give separate arguments for these two cases. 

First assume that $\gamma(u)\notin Y\cap W$. In the case $\gamma(u)\notin Y\cap W$: replace $Y$ by $Y-W$ and again by the Remark~\ref{rmk1}, $\alpha_1, \alpha_2$ are $\A^1$-chain homotopic.

In the second case assume that $\gamma(u)\in Y\cap W$. In this case, let $p:=\gamma(u)\in Y\cap W$. We can also assume that $Y\cap W=\{\gamma(u)\}$, for if $Y\cap W=\{p_1,\cdots, p_m, p\}$ be the collection of distinct closed points, then $\gamma$ factors through $Y-\{p_1,\cdots, p_m\}$, and replace $Y$ by $Y-\{p_1,\cdots, p_m\}$. 
 
 Let $\overline{Y}$ be the normalization of $Y$, which is a smooth curve. Since $\gamma:U\to Y$ is dominant, it factors through $\mu:\overline{Y}\to Y$. Thus, the $n$-ghost homotopy and in particular, $\alpha_1, \alpha_2$ factor through the $\pone$-fibration over $\overline{Y}$ given by the pullback $X\times_B \overline{Y}$. 
Over $U$, the $n$-ghost homotopy to $X\times_B \overline{Y}$ over $U$ lying over $\gamma$ factors through $X\times_B \overline{Y}\times _{\overline{Y}} U=X\times_B U\simeq \pone_U=\Proj R[x_0, x_1]$, which lifts to the blow up along the closed subscheme $Z\times_B U$. 

Let us denote the blow-up of $\pone_U$ along the the closed subscheme $Z\times_B U$ by $X_{\gamma}$. Let $J\subset R$ denote the ideal $I_{\gamma^{-1}(W)}$, associated to the closed subscheme $\gamma^{-1}(W)\subseteq U$. Then the ideal in $R[x_0, x_1]$, associated to $Z\times_B U$ is given by $I_{\gamma}:=<J, x_0>$, where $<J, x_0>$ denotes the homogeneous ideal in $R[x_0, x_1]$ generated by $J$ and $x_0$. Then the discussion before, together with Lemma~\ref{lemmaA} and Lemma~\ref{lemmaB} show that $\alpha_1$ and $\alpha_2$ are connected by an explicit 1-ghost homotopy. Now composing the 1-ghost homotopy with the map $X_{\gamma}\to \~X$, we get the 1-ghost homotopy from $\alpha_1$ to $\alpha_2$. It follows from~\cite[Lemma 4.1]{bhs} that, $\alpha_1$ and $\alpha_2$ map to the same element in $\pi_0^{\A^1}(\~X)(U)$. This finishes the proof of the lemma.
 \end{proof}

\begin{lemma}
\label{lemma4}
Let $\alpha_1$, $\alpha_2$ be sections of $\widetilde{X}$ over $U$ which are connected by an $n$-ghost homotopy $h_{\mathcal{H}}:Sp({\mathcal{H}})\to \widetilde{X}$ for some $n>0$. We further assume that, there is a morphism $\gamma:U\to B$ such that the $n$-ghost homotopy lies over the morphism $\gamma$ in the sense of the Definition~\ref{liesovergamma}. Assume that the morphism $\gamma:U\to B$ satisfies either of the following properties. 
\begin{enumerate}[label=\ref{lemma4}(\arabic*)]
\item\label{lemma41}
 $\gamma(\eta)=y$ is of codimension 1 in $B$ and $\gamma(u)$ is a closed point of $Y=\overline{\{y\}}$, $Y=W$. 

\item\label{lemma42} $\gamma(\eta)$ is the generic point of $B$ and $\gamma(u)=y$ is of codimension 1 in $B$. Let $Y=\overline{\{y\}}$, $Y=W$. 

\item \label{lemma43} $\gamma(\eta)$ is the generic point of $B$ and $\gamma(u)$ is a closed point of $W$.
\end{enumerate}
Then the sections  $\alpha_1$, $\alpha_2$ are 1-ghost homotopic and map to the same element in  $\pi_0^{\A^1}(\widetilde{X})(U)$.
\end{lemma}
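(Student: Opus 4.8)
The plan is to reduce each of the three cases to the classification of $n$-ghost homotopies of sections of $\pone_U$ lifting to a blow-up $X_\gamma$, established in Lemma~\ref{lemmaA} and Lemma~\ref{lemmaB}, with case~\ref{lemma41} degenerating to Remark~\ref{rmk1}. Since the $n$-ghost homotopy lies over $\gamma$, the composite $Sp(\mathcal{H})\to\widetilde{X}\to X$ factors through $X\times_{\gamma,B}U$; write $Z'$ for the inverse image of $Z$ in $X\times_B U$. Because $I_{Z'}$ is the inverse-image ideal of $I_Z$ and $I_Z$ pulls back to the invertible ideal $\mathcal{O}(-E)$ on $\widetilde{X}=Bl_Z X$, both the sections $\alpha_1,\alpha_2$ and the morphism $h_{\mathcal{H}}$, which a priori land in $\widetilde{X}$, lift to $Bl_{Z'}(X\times_B U)$, and there is a canonical morphism $Bl_{Z'}(X\times_B U)\to\widetilde{X}$. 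Arguing as in the proof of Lemma~\ref{lemma3}, the smooth morphism $X\times_B U\to U$ has closed fibre $\pone_{k(\gamma(u))}\otimes_{k(\gamma(u))}k(u)\cong\pone_{k(u)}$, which carries a $k(u)$-rational point; since $R$ is Henselian this point lifts to a section of $X\times_B U\to U$, and a $\pone$-fibration admitting a section over a local ring is isomorphic to the trivial one. Fix such an identification $X\times_B U\simeq\pone_U=\Proj R[x_0,x_1]$.

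Under this identification, since $Z\subseteq\pi^{-1}(W)$ is a section of $\pi^{-1}(W)\to W$, the subscheme $Z'$ is a section of $\pone_{U(J)}\to U(J)$, where $J\subseteq R$ is the ideal of the scheme-theoretic preimage $\gamma^{-1}(W)$, and $J=\langle g\rangle$ is principal since $W$ is a Cartier divisor on $B$. After interchanging $x_0,x_1$ if necessary and a linear change of coordinates $x_0\mapsto x_0-ax_1$ carrying $Z'$ (over $U(J)$) to $\{x_0=0\}$, the homogeneous ideal of $Z'$ becomes $I_\gamma=\langle J,x_0\rangle$, so $Bl_{Z'}(X\times_B U)$ is precisely the scheme $X_\gamma$ of the discussion preceding Lemma~\ref{lemmaA}. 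In case~\ref{lemma41}, $\gamma(\eta)$ is the generic point of $W$, so $\gamma$ factors through $W\hookrightarrow B$ (as $U$ is reduced and irreducible); hence $\gamma^{-1}(W)=U$, $J=\langle 0\rangle$, the ideal $I_\gamma=\langle x_0\rangle$ is invertible and $X_\gamma=\pone_U$. Then $\alpha_1,\alpha_2$ lift to sections of $\pone_U$ over $U$, which are joined by the linear $\A^1$-homotopy of Remark~\ref{rmk1}, so they are $\A^1$-chain homotopic, a fortiori $1$-ghost homotopic, and map to the same element of $\pi_0^{\A^1}(\widetilde{X})(U)$. In cases~\ref{lemma42} and~\ref{lemma43}, $\gamma(\eta)$ is the generic point of $B$ while $\gamma(u)\in W$, so $g\in\mathfrak{m}\setminus\{0\}$ and $J$ is a proper nonzero ideal; the dichotomy of cases (A) and (B) introduced before Lemma~\ref{lemmaA} then applies to $h_{\mathcal{H}}$ on $X_\gamma$, Lemma~\ref{lemmaA} handling case (A) (producing an $\A^1$-homotopy between $\alpha_1$ and $\alpha_2$) and Lemma~\ref{lemmaB} handling case (B) (producing a $1$-ghost homotopy between them and the equality in $\pi_0^{\A^1}(X_\gamma)(U)$). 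Composing with the canonical morphism $X_\gamma\to\widetilde{X}$ transports both conclusions to $\widetilde{X}$.

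The main obstacle is the trivialisation $X\times_B U\simeq\pone_U$ and the attendant bookkeeping identifying the homogeneous ideal of the preimage $Z'$ with $\langle J,x_0\rangle$ in suitable coordinates: once this normalisation is in place, cases~\ref{lemma42} and~\ref{lemma43} follow immediately from Lemma~\ref{lemmaA} and Lemma~\ref{lemmaB}, and case~\ref{lemma41} from Remark~\ref{rmk1}. A secondary point that needs care is checking that $h_{\mathcal{H}}$, and not merely its endpoints $\alpha_1,\alpha_2$, lifts to $X_\gamma$; this reduces to the fact that $I_{Z'}$ is the inverse-image ideal of $I_Z$, hence pulls back to an invertible ideal along any morphism that factors through $\widetilde{X}=Bl_Z X$.
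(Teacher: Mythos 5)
Your handling of cases~\ref{lemma42} and~\ref{lemma43} is essentially the paper's own argument: trivialize $X\times_{\gamma,B}U\simeq\pone_U$, identify $J$ with the principal nonzero ideal of $\gamma^{-1}(W)$, normalize coordinates so that the inverse-image ideal of $Z$ becomes $\langle J,x_0\rangle$, and then run the dichotomy (A)/(B) through Lemma~\ref{lemmaA} and Lemma~\ref{lemmaB}, pushing the resulting homotopies forward along $X_\gamma\to\widetilde{X}$. One justification you give is wrong as stated, namely that $h_{\mathcal{H}}$ and the $\alpha_i$ lift to $X_\gamma$ ``because $I_{Z'}$ pulls back to an invertible ideal along any morphism factoring through $\widetilde{X}=Bl_ZX$'': the inverse image of an invertible ideal sheaf need not be invertible — it is zero on anything mapping into $E$. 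In cases~\ref{lemma42} and~\ref{lemma43} this is repairable, because $\gamma(\eta)$ is the generic point of $B$, so every irreducible component of $Sp(\mathcal{H})$ (each being \'etale over some affine line over $U$ or over a cover thereof) maps generically outside $E$, hence the pullback of $\mathcal{O}(-E)$ is generated by a nonzero element of a domain and the unique lift exists; this is what the paper's appeal to \cite[Proposition 3.7]{bs19} encodes.

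The genuine gap is case~\ref{lemma41}. There $\gamma$ factors through $W$, and your claim that $\alpha_1,\alpha_2$ ``lift to sections of $\pone_U$'' is exactly where the invertibility argument breaks down irreparably: a section of $\widetilde{X}$ over $U$ lying over such a $\gamma$ may land entirely inside the exceptional divisor $E$ (its image in $X$ is then contained in $Z$), in which case the pullback of $I_Z$ is zero, there is no lift to $X_\gamma=\pone_U$, and the canonical morphism $\pone_U\to\widetilde{X}$ — whose image is the strict transform $V'$ of $\pi^{-1}(W)$ — does not contain $\alpha_i$ in its image. Consequently the linear homotopy of Remark~\ref{rmk1}, transported to $\widetilde{X}$, connects two sections of $V'$ that need not be $\alpha_1$ and $\alpha_2$, so your argument does not prove the assertion for such sections. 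A correct treatment of this case must work with the whole total transform: everything lying over $\gamma$ factors through $(E\cup V')\times_W U$, a union of two copies of $\pone_U$ glued along a section, and arbitrary sections are connected by chaining linear homotopies through the intersection (this is how the $Y=W$ case of Lemma~\ref{lemma2} is argued). Note also that the paper's own proof of case~\ref{lemma41} differs from yours: it does not take $J=\langle 0\rangle$, but sets $J=\langle r_0\rangle$ with $r_0$ the image in $R$ of a uniformizer of $\mathcal{O}^h_{W,w}$ at the closed point $w=\gamma(u)$, and then treats all three cases uniformly via Lemma~\ref{lemmaA} and Lemma~\ref{lemmaB}.
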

\begin{proof}
 We give a uniform argument for all the above cases. \\
  In the case~\ref{lemma41}: let $\gamma(u)=w$ be a closed point of $W$. Then the morphism $\gamma:U\to B$ factors through the canonical morphism $\Spec\mathcal{O}^h_{W, w}\to W\to B$. Consider the ideal $I_{\gamma}$ given by $<r_0, x_0>$, where $r_0\in R$ is the image of a uniformizer of the local ring $\mathcal{O}^h_{W, w}$, under the induced morphism $\mathcal{O}^h_{W, w}\to R$.\\
  In the case~\ref{lemma42}:  let $w$ be the generic point of $W$. Then, the morphism $\gamma:U\to B$ factors through the canonical morphism $\Spec\mathcal{O}^h_{B, w}\to B$. Consider the ideal $I_{\gamma}$ given by $<r_0, x_0>$, where $r_0\in R$ is the image of the uniformizer of the local ring $\mathcal{O}^h_{B, w}$, under the induced morphism $\mathcal{O}^h_{B, w}\to R$.\\
 In the case~\ref{lemma43}: let $\gamma(u)=w$ be a closed point of $W$. Then the morphism $\gamma:U\to B$ factors through the canonical morphism $\Spec\mathcal{O}^h_{B, w}\to B$.
Since, $W$ is a smooth closed subscheme of the smooth scheme $B$, we can find local parameters $t_1, t_2$ of $B$ at $w$ and a Zariski neighbourhood $V$ of $w$ in $B$ such that the ideal of $W$ in $V$ is given by $<t_1>.$ Then, the ideal $I_{\gamma}$ is given by $<r_0, x_0>$ where $r_0\in R$ is the image of $t_1$ in the local ring $\mathcal{O}^h_{B, w}$, under the induced morphism $\mathcal{O}^h_{B, w}\to R$. \\  
 By \cite[Lemma 2.12]{bs19}, since $B$ is $\A^1$-rigid, an $n$-ghost homotopy joining $\alpha_1, \alpha_2: U\to \widetilde{X}$ lying over $\gamma$ factors through $X\times_{\gamma, B} U \to X$. Since $U$ is a Henselian local scheme, the $\pone$-fibration    $X\times_{\gamma, B} U\to U$ is trivial i.e. isomorphic to $\pone_U$. 
  So we have sections $\alpha_1, \alpha_2: U\to \pone_U$ which lift to the blow up $X_{\gamma}$ of  $\pone_U$ along the ideal sheaf $I_{\gamma}$ such that these are $n$-ghost homotopic on $X_{\gamma}$ for some $n>0$.   In all the cases~\ref{lemma41},~\ref{lemma42} and~\ref{lemma43}, write $I_{\gamma}=<J, x_0>$ for $J=<r_0>$.\\
Then the discussion before, together with Lemma~\ref{lemmaA} and Lemma~\ref{lemmaB} show that $\alpha_1$ and $\alpha_2$ are connected by an explicit 1-ghost homotopy. Now composing the 1-ghost homotopy with the map $X_{\gamma}\to \~X$, we get the 1-ghost homotopy from $\alpha_1$ to $\alpha_2$. It follows from~\cite[Lemma 4.1]{bhs} that, $\alpha_1$ and $\alpha_2$ map to the same element in $\pi_0^{\A^1}(\~X)(U)$. This finishes the proof of the lemma.
\end{proof}

As a consequence of the above lemmas, we conclude the proof of Proposition~\ref{n-ghostlyingovergamma}. For the sake of completeness, at the cost of some repetition we present the proof here.
%
\begin{proof}[Proof of Proposition~\ref{n-ghostlyingovergamma}]
We make various cases, depending on the  \textbf{codimension of the images in $B$ of the generic point $\eta$ and the closed point $u$ of $U$ under the morphism $\gamma:U\to B$}.
We list the cases we will consider:
\begin{enumerate}
\item\label{1} $\gamma(u)$ is the generic point of $B$.
\item\label{2} $\gamma(\eta)$ is a closed point $b$ of $B$.
\item\label{3} $\gamma(u)=\gamma(\eta)=y$ of codimension 1 in $B.$
%
\item \label{4} $\gamma(\eta)=y$ of codimension 1 in $B$ and $\gamma(u)$ is a closed point of $Y=\overline{\{y\}}.$
\item\label{5} $\gamma(\eta)$ is the generic point of $B$ and $\gamma(u)=y$ of codimension 1 in $B$. Let $Y=\overline{\{y\}}$.
\item\label{6} $\gamma(\eta)$ is the generic point of $B$ and $\gamma(u)$ is a closed point of $W$.
\end{enumerate}
Cases~\ref{1} and~\ref{2} follow from the Lemma~\ref{lemma1}. Case~\ref{3} follows from the Lemma~\ref{lemma2}. Case~\ref{4} follows from the Lemma~\ref{lemma3}(2) and~\ref{lemma4}(1). Case~\ref{5} follows from the Lemma~\ref{lemma3}(1) and Lemma~\ref{lemma4}(2). Case~\ref{6} follows from Lemma~\ref{lemma4}(3).

\end{proof}
\end{section}

\paragraph{Acknowledgement} The author is supported by the Postdoctoral Fellow position in Indian Institute of Science Education and Research (IISER), Mohali. The author would like to thank Chetan Balwe for continuous suggestions and comments that immensely improved the article.  

\footnotesize

\end{document}